\def\H{{\cal H}}
\def\F{\mathscr{F} }
\def\N{\mathbb{N}}
\def\R{\mathbb{R}}
\def\C{\mathbb{C}}
\def\H2{H^2(\R^N)}
\def\L2{L^2(\R^N)}
\def\to{\rightarrow}
\def\jb#1{\langle#1\rangle}
\def\norm#1{\left\|#1\right\|}
\def\H{{\cal H}}
\def\H1{H^1(\R)}
\def\sc#1{\norm{#1}_{S_{s_c}}}
\def\so#1{\norm{#1}_{S_{0}}}
\def\xsc#1{\norm{#1}_{X_{s_c}}}
\def\xo#1{\norm{#1}_{X_{0}}}
\def\ysc#1{\norm{#1}_{Y_{s_c}}}
\def\yo#1{\norm{#1}_{Y_{0}}}
\def\zsc#1{\norm{#1}_{Z_{s_c}}}
\def\zo#1{\norm{#1}_{Z_{0}}}
\def\nh#1#2{\left\|#1\right\|_{\dot H_x^{#2}}}
\def\nl#1#2{\left\|#1\right\|_{L_x^{#2}}}
\def\ntx#1#2#3{\left\|#1\right\|_{L^#2_tL^#3_x}}
\def\nth#1#2#3{\left\|#1\right\|_{L^#2_t\dot H_x^{#3}}}
\def\ltx#1#2{\left\|#1\right\|_{L_{tx}^#2}}
\def\ei#1{e^{#1 i\frac{t}{\ve^2}}}
\def\oo{\infty}
\def\jd#1{\jb{\na}^{#1}}
\def\naabs#1{|\na|^{#1}}
\newcommand{\les}{\lesssim}
\newcommand{\ges}{\gtrsim}
\newcommand{\wt}{\widetilde}
\newcommand{\al}{\alpha}
\newcommand{\ga}{\gamma}
\newcommand{\de}{\delta}
\newcommand{\ve}{\varepsilon}
\newcommand{\De}{\Delta}
\newcommand{\p}{\partial}
\newcommand{\na}{\nabla}
\newcommand{\re}{\mathop{\mathrm{Re}}}
\newcommand{\im}{\mathop{\mathrm{Im}}}
\newcommand{\ti}{\widetilde}
\newcommand{\ba}{\overline}
\newcommand{\eq}[1]{ 
    \begin{equation}
        \left\lbrace 
        \begin{aligned}#1
        \end{aligned}
        \right.
    \end{equation}
}
\newcommand{\eqn}[1]{ 
    \begin{equation*}
        \left\lbrace 
        \begin{aligned}#1
        \end{aligned}
        \right.
    \end{equation*}
}
\newcommand{\Del}[1]{}
\numberwithin{equation}{section}
\newtheorem{thm}{Theorem}[section]
\newtheorem{lem}[thm]{Lemma}
\newtheorem{prop}[thm]{Proposition}
\newtheorem{definition}[thm]{Definition}
\theoremstyle{remark}
\newtheorem{remark}[thm]{Remark}
\newtheorem*{exam*}{Examples}
\begin{document}

    \setcounter{page}{1}

    \title[High-order Asymptotic expansion for KG]{High-order asymptotic expansion for the nonlinear Klein-Gordon equation in the non-relativistic limit regime}

\author{Jia Shen}
\address{Jia Shen \newline School of Mathematical Sciences and LPMC\\
Nankai University\\
Tianjin 300071, China}
\email{shenjia@nankai.edu.cn}

\author{Yanni Wang}
\address{Yanni Wang \newline Center for Applied Mathematics\\
Tianjin University,
Tianjin 300072, China}
\email{wyn{\_}01@tju.edu.cn}

\author{Haohao Zheng}
\address{Haohao Zheng \newline Center for Applied Mathematics\\
Tianjin University,
Tianjin 300072, China}
\email{hhzheng@tju.edu.cn}

%\date{\today}

\subjclass[2020]{35B40, 35B65.}
\keywords{Nonlinear Klein-Gordon equations, Nonlinear Schr\"odinger equations, non-relativistic limit, convergence rate}

  \begin{abstract}
      This paper presents an investigation into the high-order asymptotic expansion for 2D and 3D cubic nonlinear Klein-Gordon equations in the non-relativistic limit regime.   
      There are extensive numerical and analytic results concerning that the solution of NLKG can be approximated by first-order modulated Schr\"odinger profiles in terms of $\ei{}v + c.c. $, where $v$ is the solution of related NLS
      and ``$c.c.$" denotes the complex conjugate.
      Particularly, the best analytic result up to now is given in \cite{lei}, which proves that the $L_x^2$ norm of the error can be controlled by 
      $\ve^2 +(\ve^2t)^{\frac \al 4}$
      for $H^\al_x$-data, $\al \in [1, 4]$. 
      As for the high-order expansion, to our best knowledge, there are only numerical results, while the theoretical one is lacking.    
      
      In this paper, we extend this study further 
      and give the first high-order analytic result. We introduce the high-order expansion inspired by the numerical experiments in \cite{schratz2020, faou2014a}:
      \[ 
      \ei{}v
      +\ve^2
      \Big( \frac 18 \ei{3} v^3  +\ei{} w
      \Big)
      +c.c.,
      \] 
      where $w$ is the solution to some specific Schr\"odinger-type equation.
      We show that the $L_x^2$ estimate of the error
      is of higher order $\ve^4+\left(\ve^2t\right)^\frac{\al}{4}$ for $H^\al_x$-data, $\al \in [4, 8]$.
      Besides, some counter-examples are given to suggest the sharpness of this upper bound.      
  \end{abstract}
  
  \maketitle
  
  \renewcommand{\theequation}{\thesection.\arabic{equation}}
  \setcounter{equation}{0}

  \section{Introduction}
  In this paper, we consider the following cubic nonlinear Klein-Gordon(NLKG) equation in the non-relativistic limit regime for dimension $d=2, 3$,
  \eq{\label{eq:u} 
      &\ve^2 \p_{tt} u^{\ve}-\De u^{\ve} + \frac{1}{\ve^2} u^{\ve} = -|u^{\ve}|^2u^{\ve}, \\
      &u^{\ve}(0)=u_0, \quad \p_t u^{\ve}(0)=\frac{u_1}{\ve^2},}
  where the solution $u^\ve (t,x): \R\times\R^d \to \R$
  and initial data $u_0, u_1$ are real-valued and independent of $\ve$.
  Here $\ve$ is inversely proportional to the speed of light
  and the non-relativistic limit regime means
  the regime when the speed of light goes to infinity or equivalently $\ve $ approaches 0.
  The NLKG equation in the non-relativistic limit serves as a mathematical model for 
  describing a free particle with spin zero. 
  This equation also can be regarded as the relativistic counterpart of the Schr\"odinger equation, 
  specifically designed to characterize the kinetic energy component of scalar particles. 
  The equation \eqref{eq:u} conserves the energy
  \begin{align*}
      E(t)
      &:=
      \int_{\R^d}
      \left(\ve^{2}
      \left|\p_{t}u^{\ve}\right|^2
      +\left|\na u^{\ve}\right|^2 
      +\ve^{-2}\left|u^{\ve}\right|^2
      +\frac{1}{2}\left|u^{\ve}\right|^4
      \right)dx\\
      &=\int_{\R^d}
      \Big[
      \ve^{-2}
      \left(
      \left| u_0\right|^2
      +\left| u_1\right|^2
      \right)
      +\left|\na u_0\right|^2 
      +\frac{1}{2}\left|u_0\right|^4
      \Big]dx =E(0).
  \end{align*}
  
  The NLKG equation \eqref{eq:u} in the non-relativistic regime has been extensively studied in both mathematical and physical fields.
  It is well-known that the NLKG equation reduces to the nonlinear Schr\"odinger(NLS) equation in this limit, 
  providing a mathematical tool for bridging the gap between relativistic and non-relativistic regimes.
  Here, we highlight a crucial observation that the solution exhibits high-frequency oscillations over time within this regime. 
  The presence of oscillations makes classical integration schemes invalid and gives rise to challenges in numerical analysis. 
  Furthermore, complications emerge in analysis wherein solutions may not adhere to the uniform $H^1_x$ bound as the energy lacks uniform boundedness when $\varepsilon $ approaches 0.
  
  \subsection{High-order modulated expansion to Schr\"odinger profiles}
  Firstly, we introduce the \emph{modulated Fourier expansion} also as a WKB expansion to approximate the solution $u^\varepsilon$ in terms of the modulated profiles.
  The modulated Fourier expansion has been extensively utilized in the numerical analysis of oscillatory problems \cite{cohen2003, faou2013, hairer2006}.
  The basic idea is to expand solution $u^\ve$ as follows 
  \begin{align*}
      u^\ve(t)=\sum_{m\in \N_+}e^{i\frac{mt}{\ve^2}} u_m (t), 
  \end{align*} 
  where all time derivatives of profiles $u _m(t)$ is uniformly bounded as $\ve $ approaches to $ 0$.
  In particular, these modulated profiles are typically selected as solutions to the cubic NLS and thus called \emph{Schr\"odinger profiles}. 
  We note that those profiles $u_m(x)$ are also allowed to be $\ve $-dependent oscillating
  and the corresponding expansion is called \emph{multiscale frequency expansion}.
  We will see the expansion involving \emph{Schr\"odinger-wave profiles} below is an example of this type of expansion.
  However, we focus on the modulated Fourier expansion involving Schr\"odinger profiles in this paper.
  
  In this paper, we introduce a high-order modulated Fourier expansion inspired by the numerical study in \cite{schratz2020, faou2014a}:
  \begin{definition}[Asymptotic expansion]\label{defn;main} We make the following notation.
      \begin{enumerate}
          \item (Schr\"odinger profiles)
          Let the profiles $v$, $w$ be defined as the solutions of the following nonlinear Schr\"odinger equations, respectively:
          \eq{\label{eq:v}
              & 2i \p_t v-\De v +3|v|^2v=0, \\
              & v(0) = v_0=\frac12(u_0-iu_1), 
          }
          and 
          \eq{ 
              \label{eq:f}
              &2i\p_t w -\De w +\p_{tt} v +3 \left(\frac 18 |v|^4v+v^2 \ba w+ 2|v|^2w\right)=0,\\
              & w_0
              =\frac 14 \De (v_0-\ba {v_0})
              -\frac14 v_0^3+\frac 18 \ba {v_0}^3 
              -\frac{3}{4}|{v_0}|^2(v_0-\ba {v_0}).
          }
          \item (Leading terms) We defined the first-order and the next-order leading terms $\Phi_1$ and $\Phi_2$ by
          \begin{equation}
              \label{Phi_1}
              {\Phi_1}:= \ei{}v+c.c.,
          \end{equation}
          and
          \begin{equation}
              \Phi_2 :=\frac 18 \ei{3} v^3 +\ei{} w+c.c., 
              \label{Phi_2}
          \end{equation}
          where $v$ and $w$ solve \eqref{eq:v} and \eqref{eq:f}, respectively.
          \item (High-order expansion) Now, we introduce the desired expansion by
          \begin{align}
              \label{expansion of u}
              u^\ve
              &=\Phi_1 +\ve^2 \Phi_2+r_1, 
          \end{align}
          where $r_1$ denotes the related higher-order remainder.
      \end{enumerate}
  \end{definition}
  
  First, we recall the first-order modulated Fourier expansion that appeared in the previous literature \cite{masmoudi,bao2012b,faou2014} using the above notation:
  \begin{equation}
      \label{first order expansion of u}
      u^{\ve}= {\Phi_1}+r,
  \end{equation}  
  where $\Phi_1$ is given in \eqref{Phi_1}, and $r$ stands for the first-order remainder term.  
  Recall the result in \cite{lei}, the first-order remainder term $r$ in \eqref{first order expansion of u} verifies 
  \eq{
      \label{eq:r}
      &\ve^2 \p_{tt} r -\De r +\frac{1}{\ve^2} r
      + F_1(v, r)+F_2(v)+F_3(v)=0, \\
      &r(0)=0, \quad  \p_t r(0) = -(\p_t v(0)+\p_t \ba v(0)),	 
  }
  where 
\begin{align*}
    F_1(v, r): =3{\Phi_1}^2 r+3{\Phi_1}r^2+r^3,
    \quad
    F_2(v):= \ei{3}v^3+c.c.,
    \quad
    F_3(v):=\ve^2 
    \ei{ }\p_{tt}v+ c.c..
\end{align*}
  We note also that a choice of the equation of $v$ is the
  following Schr\"odinger-wave equation:
  \[ \ve^2\p_{tt}v+2i \p_t v-\De v +3|v|^2v=0, \]
  and the corresponding profile is called Schr\"odinger-wave profile,
  which we do not discuss further here.
  
  Numerous contributions in numerical analysis have extensively investigated the convergence rate of the first-order remainder term $r$ as $\ve$ approaches 0. 
  Pertinent studies, such as those conducted by Bao et al. \cite{bao2014a, bao2012a, bao2019}, Chartier et al. \cite{chartier2014b, chartier2015}, Dong \cite{dong2014}, Schratz \cite{schratz2020}, and Zhao \cite{zhao2017}, offer valuable insights into this convergence phenomenon.
  For example, the literature \cite{bao2019, chartier2015, schratz2020}
  empirically observes quadratic convergence in $\ve $ for the smooth initial data.
  Numerical experiments presented in  \cite{bao2016,bao2019,schratz2020} suggest that $r$ is of order $(1 + t)\ve^2$ for $H^3_x$-data, and $(1 + t)\ve$ for $H^2_x$-data. 
  These numerical experiments indicate that the convergence rate is not uniform over time and exhibits correlations with the regularity of the initial data.
  In addition,
  numerous studies have explored modulated Schr\"odinger-wave profiles, see \cite{bao2012b, bao2014, bao2019, schratz2020}.
  
  Some rigorous results regarding the convergence rate of the first-order remainder term $r$ have been established in the realm of analysis. In \cite{masmoudi2002}, Masmoudi and Nakanishi derived a local error estimate of $\ve^{\frac 12}$ in the $L^2$ framework for $O(1)$ time, assuming finite energy and $L^2$ convergence of the initial data. Furthermore, Bao, Lu, and Zhang \cite{bao2023} proved that for any $(u_0, u_1) \in \left( H^{s+4\al} \cap L^1\right)^2$ with $s >\frac 32$ and $d=3$, there holds  for time up to $O\left(\ve^{-\frac 1 2 \al}\right)$
  that
  $$\norm{u^\ve -\Phi_1}_{H^s_x}\les (1 + t)\ve ^ \al, \quad \al = 1, 2. $$
  Recently, Lei and Wu \cite{lei} significantly enhanced this result by proving that
  for $u_0, u_1 \in H^\al (\R ^d )$, $d=2,3$ with $ 1\le \al \le 4$ that
  for all $t\ge 0$,
  \[ \nl{u^\ve -\Phi_1}{2} \les \ve^2 +(\ve^2t)^{\frac 14\al}.\]
  They also showed the sharpness of the upper bounds for this result by constructing counter-examples.
  This enhanced result solidifies the validity of the aforementioned numerical experiments from an analytical perspective.
  For analytical results of the Schr\"odinger-waves profiles, we refer to \cite{lei}  and references therein.
  
  Next, we consider the results concerning the high-order expansions.
  Faou and Schratz \cite{faou2014a} presented explicit formulations for the first- and second-order terms in the high-order asymptotic expansion of the solution to \eqref{eq:u}.
  In \cite{schratz2020}, Schratz and Zhao provided the numerical analysis yielding an $O(\ve^4)$ error estimate in $H^1_x$ norm based on three forms of the high-order versions of asymptotic expansion: modulated Fourier expansion for $H^8_x$-data, multiscale expansion by frequency $H^4_x$-data, and Chapman-Enskog type expansion for $H^4_x$-data. For the analytical study, we use a variant version of the modulated Fourier expansion \eqref{expansion of u}. Recall their modulated Fourier expansion in \cite{schratz2020}, using our notation, is given as
\begin{align*}
         u^{\ve}(t) =  \ei{}v + 
         \ve^2\Big(
          \frac 18 \ei{3} v^3 +\ei{} w^* 
       - \frac34 \ei{} |v|^2  v
      \Big)+c.c. + r_1^*, 
\end{align*}
  where $w^*$ solves some Schr\"odinger type equation and $r^*_1$ is the high-order error.
  Here we note  the term $ 
        -\frac34 \ei{} |v|^2  v$ is unnecessary,
        as it can be incorporated into $\ei{}w^*$,
        thereby yielding our expansion \eqref{expansion of u}.
 
  We explain roughly our choice of $\Phi_2$ in \eqref{Phi_2} and the equation \eqref{eq:f} of $w$.
  A more detailed analysis and explanation 
  % about the choice of $\Phi_2$ and the equation of profile $w$
  will be given in Section \ref{the high-order asymptotic expansion}. 
  In fact, the separation of the next-order term $\Phi_2$ is based on the error estimate for the first-order asymptotic expansion established in \cite{lei}.
According to the results in \cite{lei} that $r$ satisfying \eqref{eq:r} is of order $O(\ve^2)$, we decompose $r$ further by letting
  \begin{align*}
     r: = \ve^2 \Phi_2 +r_1,
  \end{align*}
  where  we expect that $\ve^2 \Phi_2$ with $\Phi_2\sim O(1)$ is the $O(\ve^2)$ leading term, and $r_1$ is the corresponding high-order remainder term.
Inserting this formula into \eqref{eq:r}, we have the equation of $r_1$
\begin{align*}
    \ve^2 \p_{tt} r_1 -\De r_1 +\frac{1}{\ve^2} r_1 +F_1(\Phi_1, \Phi_2, v)
    + F_2(\Phi_1, \Phi_2, r_1)
  =0,
\end{align*} 
where
\begin{align*}
    F_1(\Phi_1, \Phi_2, v)
    &=\ve^4 \p_{tt}\Phi_2 -\ve^2 \De \Phi_2 +\Phi_2
   + 3\ve^2 \Phi_1^2\Phi_2
   +F_2(v)
   +F_3(v),
   \notag\\
  F_2(\Phi_1, \Phi_2, r_1)
    &=3\ve^4 {\Phi_1}\Phi_2^2 +\ve^6 \Phi_2^3
    +
    3\left({\Phi_1}+\ve^2 {\Phi_2}\right)^2 {r_1}
    + 3\left({\Phi_1}+\ve^2 {\Phi_2}\right) r_1^2
    +r_1^3.
\end{align*}
   To show $r_1$ is of a higher order than $O(\ve^2)$, we need to apply the following principle which can be extracted from \cite{lei}: for a general equation
  \[
    \ve^2\p_{tt} r-\De r+\frac 1{\ve^2} r
    + F(\ve,t,x)=0,
  \]
we have that $r$ presents of order $O(\ve^a), a\ge 2$ if initial data is small enough and the nonlinearity $F(\ve,t,x)$ involves the terms like
\begin{center}
   $O(\ve^a) $; \quad 
 oscillation $\ve^{a-2}\ei{ m}\cdot O(1), m\neq 1$; \quad
bootstrap structure like $O(1)\cdot r+O(1)\cdot r^2+r^3$; 
\end{center}
or their linear combinations,
See Lemmas \ref{lem:estimate of r with small perturbation}--\ref{lem:estimate of r with bootstrap} for more details.
  Guided by this principle,
  we first observe that $F_2(\Phi_1, \Phi_2, r_1)$ formally satisfies the above forms for $a>2$.
  Therefore, we can expect $r_1$ to be of a higher order than $O(\ve^2)$ if $F_1(\Phi_1, \Phi_2, v)$ satisfies formally the principle with $a > 2$ given $\Phi_1, \Phi_2\sim O(1)$.
  To clarify the order of each term in  $F_1(\Phi_1, \Phi_2, v)$,
  we employ the idea of the modulated Fourier expansion and set reasonably
     \begin{align*}
            \Phi_2 = \ei{} \eta_1 +\ei{3} \eta_2+c.c..
         \end{align*}
 The choice of the expansion for $\Phi_2$ is mainly due to the presence of the factors $\ei{\pm }$ and $\ei{\pm 3}$ in \eqref{eq:r}.
Substituting this formula of $\Phi_2$ into the equation of $r_1$ and 
     we get 
     \begin{align*}
    \ve^2 \p_{tt} r_1 -\De r_1 +\frac{1}{\ve^2} r_1
    +F_\ve
    =0,
\end{align*}
where $F_\ve$ is given by 
\begin{align*}
	F_\ve 
	&=\ei{} \ve^2 \big[
			2i\p_t \eta_1-\De \eta_1+3(2\eta_1|v|^2
			+v^2\ba \eta_1+ \ba v^2 \eta_2)
			+\p_{tt}v
		\big]
        +\ei{3}\big(v^3 -8\eta_2\big)
	\\
	&\quad
	+\ve^2 \ei{m} O(1)
    +\ve^4 O(1) 
    +F_2(\Phi_1, \Phi_2, r_1)
	+c.c..
\end{align*}
     Let the coefficients of $\ve^2 \ei{\pm } $ and $\ei{\pm 3}$ be zero, which implies \eqref{eq:f} and \eqref{Phi_2},
     then $F_\ve $ formally satisfies the above principle with $a >2$. Moreover, $r_1$ can be identified of order $O(\ve^4)$ by further analysis.   
  We note that the profile $w$ may be not uniquely defined, see Remark \ref{rem 1} below.
  However, we believe that
  the high-order oscillator term $\frac 18  \ei{3} v^3 +c.c. $
  arises inevitably in the $O(\ve^2) $ leading term $\Phi_2$, which verifies the observation in numerical experiments \cite{schratz2020} from the theoretical perspective.

  \subsection{Main theorem}
  In this paper, we focus on the high-order asymptotic expansion \eqref{expansion of u} and aim to show the convergence rate of $r_1$ is of order $O(\ve^4)$.
  This presents a higher convergence rate for the error $r_1$, compared to the $r$-estimate appeared in the previous result \cite{lei}.
At the same time, our result indicates the validity of the high-order asymptotic expansion \eqref{expansion of u}. 
  Moreover, as shown in the subsequent theorem, 
  the convergence rate of $r_1$ depends on time and the regularity of initial data, which is similar to the first-order convergence result \cite{lei}.
  
  We state the main theorem as follows. 
  
  \begin{thm}[Convergence rate]
      \label{main theorem}
      Let $d=2,3$ and $u_0,u_1 \in H^{\al}_x(\R^d)$ with $\al\in [4, 8]$.
      Assume that $\Phi_1$ and $ \Phi_2$ are given in Definition \ref{defn;main},
      then we have for all $ t \ge 0 $,
      \begin{align*}
          \nl{u^\ve -\Phi_1 -\ve^2 \Phi_2}{2}
          \le C\big(\ve^4+\left(\ve^2t\right)^\frac{\al}{4}\big).
      \end{align*}
      where the constant $C$ depends on $\norm{u_0}_{H_x^{\al}}$,
      $\norm{u_1}_{H_x^{\al}}$.
  \end{thm}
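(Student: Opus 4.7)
The plan is to set $r_1 := u^\varepsilon - \Phi_1 - \varepsilon^2 \Phi_2$, derive the Klein-Gordon equation $\varepsilon^2 \partial_{tt} r_1 - \Delta r_1 + \varepsilon^{-2} r_1 + F_\varepsilon = 0$ already sketched in the introduction, and then invoke the general decay principle recalled just before Lemmas \ref{lem:estimate of r with small perturbation}--\ref{lem:estimate of r with bootstrap}: if $F_\varepsilon$ splits into (i) a small $O(\varepsilon^a)$ piece, (ii) oscillatory pieces $\varepsilon^{a-2} e^{imt/\varepsilon^2}\cdot O(1)$ with $m\neq\pm 1$, and (iii) a bootstrap polynomial $O(1) r_1 + O(1) r_1^2 + r_1^3$, and if the initial data for $r_1$ have effective size $O(\varepsilon^a)$, then $\|r_1\|_{L^2_x}$ is $O(\varepsilon^a)$ together with the advertised $(\varepsilon^2 t)^{\alpha/4}$ time correction coming from interpolation against the $H^\alpha_x$ regularity of the data. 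The theorem will follow by matching the present configuration to this template with $a=4$, mirroring the argument of \cite{lei} (where the analogous template is met with $a=2$).

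First I would establish global-in-time well-posedness of $v$ in $H^\alpha_x$ for $\alpha\in[4,8]$ by standard cubic NLS theory, and then bound $\partial_t^k v$ in $H^{\alpha-2k}_x$ by differentiating \eqref{eq:v} in time. I would next rewrite $\partial_{tt} v$ appearing in \eqref{eq:f} via $2i\partial_t v=\Delta v-3|v|^2 v$, so that \eqref{eq:f} becomes a cubic NLS for $w$ with a forcing that is polynomial in $v$ and its spatial derivatives; this yields $w\in H^{\alpha-2}_x$ together with the time-derivative estimates on $w$ used below, and is the reason for the lower threshold $\alpha\ge 4$. I would then verify the structural decomposition of $F_\varepsilon$ announced in the excerpt: the $O(1)$ coefficient of $e^{\pm it/\varepsilon^2}$ vanishes by \eqref{eq:v}; the $O(\varepsilon^2)$ coefficient of $e^{\pm it/\varepsilon^2}$ vanishes by \eqref{eq:f}; and the $O(1)$ coefficient of $e^{\pm 3it/\varepsilon^2}$ vanishes thanks to the $\tfrac18 v^3$ term in $\Phi_2$. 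What survives — the cross-interactions $\varepsilon^2 \Phi_1^2\Phi_2$ producing $e^{\pm 3it/\varepsilon^2}$ and $e^{\pm 5it/\varepsilon^2}$ at order $\varepsilon^2$, the higher-order terms $\varepsilon^4 \Phi_1\Phi_2^2$, $\varepsilon^6\Phi_2^3$, $\varepsilon^4\partial_{tt}\Phi_2$, and the explicit bootstrap polynomial — all match the template at $a=4$. A short computation then shows that the specific choice of $w_0$ in \eqref{eq:f} is calibrated so that $r_1(0)=0$ exactly (the $\tfrac18(v_0^3+\bar v_0^3)$ contribution is cancelled by $w_0+\bar w_0$), while $\partial_t r_1(0)=O(\varepsilon^2)$, giving effective initial data of size $O(\varepsilon^4)$ in the KG free-energy framework.

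With this structure in place, applying Lemmas \ref{lem:estimate of r with small perturbation} and \ref{lem:estimate of r with bootstrap} directly yields $\|r_1\|_{L^2_x}\lesssim \varepsilon^4 + (\varepsilon^2 t)^{\alpha/4}$. The main obstacle I anticipate is the bookkeeping in the verification step: certifying that every surviving piece of $F_\varepsilon$ lies, uniformly in $\varepsilon$, in the precise Sobolev class demanded by the bootstrap lemma, and confirming the initial-data cancellation by expanding $\partial_t u^\varepsilon(0) - \partial_t \Phi_1(0) - \varepsilon^2 \partial_t\Phi_2(0)$ completely in powers of $\varepsilon$ — this is exactly where the formula for $w_0$ in \eqref{eq:f} earns its form. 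The secondary delicacy is tracking the time growth: because the KG energy only controls $L^2_x$ up to a factor of $\varepsilon^2$, transferring $H^\alpha_x$ regularity of the data through interpolation with the natural $\varepsilon$-weighted energy norm produces exactly the $(\varepsilon^2 t)^{\alpha/4}$ factor — the same mechanism as in \cite{lei}, shifted upward by the two extra derivatives spent on $\Phi_2$ and saturating at $\alpha=8$, which accounts for the upper endpoint of the hypothesis.
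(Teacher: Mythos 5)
Your architecture matches the paper's: set $r_1=u^\ve-\Phi_1-\ve^2\Phi_2$, verify that the nonlinearity in the $r_1$-equation splits into small, oscillatory ($m\neq\pm1$), and bootstrap pieces calibrated to $a=4$, check that the choice of $w_0$ forces $r_1(0)=0$ and $\p_t r_1(0)=O(\ve^2)$, and then upgrade from smooth to $H^\al$ data. The structural verification and the initial-data cancellation are correct as you describe them. But the step ``applying Lemmas \ref{lem:estimate of r with small perturbation} and \ref{lem:estimate of r with bootstrap} directly yields $\ve^4+(\ve^2t)^{\al/4}$'' hides the two places where the actual work is, and in both places the lemmas as stated do not apply. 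First, their hypotheses require the coefficients to be uniformly bounded in time, and here they are not: $\ve^4\p_{tt}\Phi_2$ contains $\ve^4\p_{tt}w$, and because the $w$-equation is forced by $\p_{tt}v\sim\De^2v$, the Duhamel bound for $w$ grows like $T\nh{v_0}{4+\ga+s_c}$ and that for $\p_{tt}w$ like $T^2\nh{v_0}{8+\ga+s_c}$ (Lemmas \ref{lem:estimate of f} and \ref{lem:estimate of phi}). Even for $H^8$ data the conclusion is only $\nl{r_1}{2}\les\ve^4(1+t\nh{v_0}{6}+t^2\nh{v_0}{8})$, the bootstrap closes only under a smallness condition $\ve^4T\nh{v_0}{4+s_c}\les\de_0$, and for larger times the paper must switch to the conservation of energy to get a crude $O(1+\ve^2T)$ bound. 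Your attribution of the time growth to the $\ve^2$ mismatch between the KG energy and $L^2_x$ is not the mechanism; it comes from the non-integrable forcing $\p_{tt}v$ in the $w$-equation.

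Second, the factor $(\ve^2t)^{\al/4}$ for $\al\in[4,8)$ is not obtained by a soft interpolation against the data regularity. The paper decomposes $v_0=P_{\le N}v_0+P_{>N}v_0$, builds the whole expansion from the smooth low-frequency data (so Proposition \ref{main prop} applies to $u^\ve-{\Phi_1}_N-\ve^2{\Phi_2}_N$), separately estimates the high-frequency profile differences $v^N$ and $w^N$ — the latter requiring a re-run of the $w$-analysis for the difference equation, which is where the threshold $\al\ge4$ actually enters — and then optimizes $N\sim(\ve^2t)^{-1/4}$ through a case analysis balancing $N^{-\al}(1+\ve^2N^2+\ve^2tN^4)$ against $\ve^4(1+t\beta_\al(N)+t^2N^{8-\al})$. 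None of this is in your proposal, and without it the stated bound does not follow from the three template lemmas. So: right skeleton and correct identification of the cancellations, but the quantitative time-growth/bootstrap/energy trichotomy and the high-low frequency optimization are genuine missing arguments, not bookkeeping.
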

  
  \begin{remark}
      \label{rem 1}
      We remark that the choice of $w$ is not unique.
      Indeed, we can further decompose $w$ by assuming that
      \[ w(t,x):= a\De v +b|v|^2v +g(t,x),  \]
      where $a$ and $b$ can be arbitrary constants, and
      $g$ satisfies the following Schr\"odinger-type equation:
      \eqn{
          &2i\p_t g -\De g+3 \left( v^2\ba g+ 2|v|^2g  \right)+F(v)=0, \\
          &g(0) =w_0-a\De v_0-b|v_0|^2 v_0,
      }
      where 
      \begin{align*}
          F(v)&=-\frac 14 \De^2 v
          +\left(-3a-b+\frac 34 \right) \De \left(|v|^2v\right)
          +\left(6a+2b+\frac 32 \right)|v|^2 \De v
          +\left(3a-b-\frac 34 \right)v^2\De \ba v 
          \\
          &\quad
          +\left(6b-\frac {15}8\right)|v|^4v. 
      \end{align*}
      As it presents, there is always a $\De ^2 v $-term included in $F(v)$, which results in time-dependent bounds for both $w$ and $ g$. The remaining terms can be addressed similarly in our analysis.
      Consequently, the analysis of $w$ and $g$ is essentially identical regardless of the constants $a$ and $b$.
  \end{remark}
  \begin{remark}
      The value of $\al\ge 4 $ can be seen as a continuous extension to the first asymptotic expansion in \cite{lei} which requires $H^\al$-data, $\al\in [1,4]$.
      We also note for $\al\ge 8$,
      the upper bound $O(\ve^4+\ve^4t^2)$ is optimal,
      see Theorem \ref{optimal rate} below.  
  \end{remark}

  We show the optimality of convergence rate in Theorem \ref{main theorem} by constructing the following counter-examples.
  \begin{thm}[Counter-examples]
      \label{optimal rate}
      Let $d=2,3$, 
      and assume $\Phi_1, \Phi_2$ are given in Definition \ref{defn;main}.
      Then there exists an initial data in $ \mathscr S (\R^d)$  such that the solution $u^\ve$ of \eqref{eq:u} satisfies
      \begin{align*}
          \nl{u^\ve -  \Phi_1-\ve^2\Phi_2 }{2}
          \ge C_1  \ve^4t^2,
          \quad
          1 \le t \le \ve^{-2}\de_0,
      \end{align*}
      and an initial data in $ H^\al _x(\R^d), \al \in [4, 8]$ such that the related solution satisfies
      \begin{align*}
          \nl{u^\ve -  \Phi_1-\ve^2\Phi_2 }{2}
          \ge C_2  \left(\ve^2 t\right)^\frac \al 4
          \big|
          \ln (\ve^2t)
          \big|^{-1},
          \quad
          1 \le t \le \ve^{-2}\de_0.
      \end{align*} 
      Here, the constants $\de_0$ that is suitably small, and $C_1, C_2$ are independent of $\ve$ and $t$.
  \end{thm}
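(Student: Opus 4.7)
The proof adapts to this second-order setting the counter-example construction of \cite{lei}. The heuristic is the formal expansion of the KG dispersion
\[
e^{it\ve^{-2}\sqrt{1+\ve^2|\x|^2}}
= \ei{}\,e^{\frac{it}{2}|\x|^2}\,e^{-i\ve^2t|\x|^4/8}+\cdots,
\]
in which $\Phi_1$ removes $\ei{}$ and $\ve^2\Phi_2$ captures the first Taylor coefficient of $e^{-i\ve^2t|\x|^4/8}$, forcing $r_1$ to match the residual $\ei{}e^{\frac{it}{2}|\x|^2}\bigl(e^{-i\ve^2t|\x|^4/8}-1+i\ve^2t|\x|^4/8\bigr)\ha{v_0}$ at the linear level. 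Choosing initial data with controlled mass at the critical frequency scale $|\x|\sim(\ve^2t)^{-1/4}$ saturates this bound.

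\textbf{Step 1 (Linearisation and explicit residual).} Decompose $v=v^L+v^N$ with $v^L:=e^{\frac{it}{2}\De}v_0$, and similarly $w=w^L+w^N$, $\Phi_j=\Phi_j^L+\Phi_j^N$, $r_1=r_1^L+r_1^N$. A bootstrap using the lemmas of Section 2 (now applied at order $\ve^4$ rather than $\ve^2$) will give $\nl{r_1^N}{2}=o(\nl{r_1^L}{2})$ uniformly for $1\le t\le\de_0\ve^{-2}$, reducing the problem to a lower bound on $\nl{r_1^L}{2}$. A direct Fourier computation using the Cauchy formula for the KG operator and the Duhamel formula for $w^L$ yields
\[
\ha{r_1^L}(t,\x)=\ei{}e^{\frac{it}{2}|\x|^2}\,G(\ve^2t,\x)\,\ha{v_0}(\x)+c.c.,
\]
with $v_0=\tfrac12(u_0-iu_1)$ and $G(s,\x)=e^{-is|\x|^4/8}-1+\tfrac{i}{8}s|\x|^4+O(s^2|\x|^6)$. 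In particular $|G(s,\x)|\gtrsim\min\big((s|\x|^4)^2,\,s|\x|^4\big)$ for $s\ge 0$, and both branches of the minimum are $\sim 1$ at the critical scale $|\x|\sim(\ve^2t)^{-1/4}$.

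\textbf{Step 2 (Choice of initial data).} For the $\ve^4t^2$ bound, take $u_1\equiv 0$ and $u_0$ Schwartz (e.g.\ a Gaussian) with $\||\x|^8\ha{v_0}\|_{L^2}>0$. Restricting the $L^2$ integral for $r_1^L$ to $|\x|\le(\ve^2t)^{-1/4}$, where $|G|\gtrsim(\ve^2t|\x|^4)^2$, yields $\nl{r_1^L}{2}\gtrsim \ve^4t^2$ on the required time range. For the $H^\al$ bound, $\al\in[4,8]$, take $u_1\equiv 0$ and a $t$-independent $u_0$ with
\[
\ha{u_0}(\x)\sim|\x|^{-\al-d/2}\bigl|\ln|\x|\bigr|^{-1}\quad\text{for }|\x|\gtrsim 1
\]
(smoothly truncated at low frequencies), so that $\|u_0\|_{H^\al}\les 1$. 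Then for every $t$ in the allowed range the dyadic shell $|\x|\sim N:=(\ve^2t)^{-1/4}$ satisfies $|G|\gtrsim 1$, and
\[
\nl{r_1^L}{2}^{2}\gtrsim\int_{|\x|\sim N}|\ha{v_0}(\x)|^{2}\,d\x\sim N^{-2\al}|\ln N|^{-2}=(\ve^2t)^{\al/2}|\ln(\ve^2t)|^{-2},
\]
which is the claimed lower bound. The logarithmic loss arises from spreading the data across all frequencies with the borderline decay forced by $u_0\in H^\al$.

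\textbf{Main obstacle.} The technically hardest part is Step 1: proving $\nl{r_1^N}{2}=o(\nl{r_1^L}{2})$ requires bootstrapping at one higher order in $\ve$ than in the proof of Theorem \ref{main theorem}, carefully controlling $v^N$ and $w^N$ via Strichartz and energy estimates for \eqref{eq:v} and \eqref{eq:f} together with the KG bootstrap lemmas of Section 2. The time restriction $t\le\de_0\ve^{-2}$ appears precisely to keep these nonlinear errors strictly below the linear leading-order term, thereby protecting the lower bounds obtained in Step 2.
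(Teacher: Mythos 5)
Your heuristic is the right one, and it matches the paper's mechanism exactly: the paper isolates the near-resonant integral $\int_0^{\ve^{-2}t}e^{is(-\jd{}+1-\frac\De2)}(\cdots)\,ds$ coming from $\p_{tt}\ti w$ inside $G_1$, expands the phase as $1+\frac18 it\De^2+t\De^3T_t$ via Mihlin--H\"ormander, and the resulting $\frac{i}{64}t^2\De^4$ term is precisely the quadratic Taylor remainder $e^{-is|\x|^4/8}-1+\frac i8 s|\x|^4$ in your expansion; the $H^\al$ data $\ha{v_0}\sim|\x|^{-\al-d/2}|\ln|\x||^{-1}$ and the critical scale $N\sim(\ve^2t)^{-1/4}$ are also the paper's. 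But there are three concrete gaps. First, the assertion $\nl{r_1^N}{2}=o(\nl{r_1^L}{2})$ is the whole battle and is false as stated for $O(1)$ data: the cubic contributions (e.g.\ $\p_{tt}(h^3)$ in $G_1$, and the nonlinear part of $\p_{tt}v$ feeding the $w$-equation) are also of size $\ve^4t^2\nh{v_0}{8}$. The paper makes them subordinate only by taking $v_0=\de_0 g$ with $\de_0$ small, so that the linear main term carries $\de_0$ while every nonlinear error carries $\de_0^3$; you never introduce this amplitude parameter.

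Second, your error term $O(s^2|\x|^6)$ misses a competitor: the next order of the dispersion relation produces a real term of size $\ve^{4}t\nh{v_0}{6}$ (the paper's $-\frac18\ve^4\de_0 tS_\ve(\De^3g)$ in \eqref{I_1-1}), which at $t\sim1$ is exactly as large as the claimed main term $\ve^4t^2\nh{v_0}{8}$. The paper beats it by concentrating $g$ at frequency $A_0\gg1$ so that $\nh{g}{8}=A_0^{2}\nh{g}{6}$; ``a Gaussian with $\||\x|^8\ha{v_0}\|_{L^2}>0$'' does not suffice. Third, $r_1$ is real-valued, so $\ha{r_1}\propto\ha{v_0}\,\re\big(\ei{}e^{\frac{it}2|\x|^2}G\big)$, and at a fixed time the rapidly oscillating factor $\ei{}$ can make this real part small on the frequency region over which you integrate; a pointwise-in-$t$ Fourier-side lower bound does not follow directly. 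The paper resolves this by first proving the lower bound for the complex amplitude $\phi=(\p_t+i\jd{})\jd{-1}R_1$, whose modulus is insensitive to the oscillation, and then transferring it to $R_1=\im\phi$ by a mean-value argument on time intervals of length $\ve^2$ (Step 2 of Lemma \ref{lem:est of phi_de}). Each gap is fixable, but none is cosmetic.
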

  \begin{remark}
      Here $t\ge 1$ guarantees that $(\ve^2t)^\frac \al 4 $ is the main dominant term in the convergence rate of $r_1$ given in Theorem \ref{main theorem}.
%      is greater than $ \ve^4$ and those cross terms between them.
      In addition, $t \le \ve^{-2}\de_0$ is necessary due to the bootstrap argument used in our analysis.
  \end{remark}
  
  Inspired by \cite{lei}, 
  we give counter-examples based on the strategies for the different phase characteristics. 
  Extracting continually the dominant term from the Duhamel formula of $r_1$,
  we encounter the dominant term that is included in the following forms:
  \begin{align*}
      \int_0^t e^{\pm is(\jd{}+1-\frac \De 2)} F(h_0) ds 
      \quad
      \mbox{ and } 
      \quad
      \int_0^t e^{\pm is(-\jd{}+1-\frac \De 2)} F(h_0) ds.
  \end{align*}
  Then the strategies can be stated as follows.
  For the first integral above, 
  we observe that the phase is away from zero after applying the Fourier transform, 
  and then employing integration by parts 
  gives a higher order in $\ve$. Therefore, this term can be absorbed into the remainder term. 
  Moreover, the phase in the second integral may be closed to zero when the frequency is near zero after applying the Fourier transform. Then, we approximate the phase utilizing some linear operator.
  More specifically, we have
  \begin{align*}
      e^{it\left(-\jd{}+1-\frac \De 2\right)}
      =1+\frac18 it\De^2 
      +O(t\De^3),
  \end{align*}
  which can be deduced from Taylor's expansion and Mihlin-H\"ormander Multiplier Theorem.
  This formula implies that the dominant term is included in
  the part of the action of the operator $1+\frac 18 it\De ^2$.
  The term including $O(t\De^3)$ will be controlled by $O(t\ve^6)$,
  and can be regarded as the remainder term.
  With the help of both strategies, 
  we can find the dominant term, and analyze it further to give counter-examples.
  
  \subsection{Organization}
  This paper is structured as follows. 
  In Section \ref{preliminary}, we present some basic notation and useful lemmas. 
  In Section \ref{the high-order asymptotic expansion},
  we give some crucial lemmas as principles to deduce the $O(\ve^2)$ leading term in the high-order asymptotic expansion of $u^\ve$.
  In Section \ref{the key estimates}, we establish the space-time estimate for $w$ to obtain the space-time estimate for the nonlinear part of $\phi$, applying the Strichartz estimates and bootstrap argument. 
  In Section \ref{Non-relativistic limit}, 
  with key estimates established in previous sections, 
  we demonstrate Theorem \ref{main theorem} in the regular case, and further
  extend it to the non-regular case.
  The proof relies on the high-low frequency decomposition
  and bootstrap argument.
  In Section \ref{Optimal convergence rate}, 
  we construct counter-examples 
  to show Theorem \ref{optimal rate}.

\section{Preliminary}
\label{preliminary}
\subsection{Notations}
\label{notations}
Denote $\langle \cdot \rangle = \left( 1+|\cdot|^2 \right) ^{\frac12}$.
We use $ A\lesssim B $ or $  A = O(B) $ to denote $ A \le CB $ 
and use $ A \ges B $ to denote $A \ge CB$ where $ C > 0 $ is a constant.
Moreover, we denote $A\sim B $ if $A\les B $ and $B\les A $.

For a function $ g(x) $, we denote the Lebesgue's and Sobolev's space norms for $g(x)$:
\begin{align*}
    \|g\|_{L_x^q}&:=\|g\|_{L^q_x(\R^d)}
    : =\left(\int_{\mathbb{R}^d} |g(x)|^q dx \right)^{1/q}, 
        \\
    \|g\|_{\dot H^s_x}&:=\||\nabla|^s g\|_{L_x^2}, 
    \quad
  \|g\|_{H^s_x} := \left( \|g\|^2_{L_x^2}
            +\|g\|_{\dot H^s_x}^2 \right) ^{\frac{1}{2}} . 
\end{align*}
We denote the space-time norm for $g(t,x)$:
\begin{align*}
    \norm{g}_{L_t^qL_x^p(I)} := \left( \int_{I}^{} \left( \int_{\R^d}^{} |{g(t,x)}|^pdx\right) ^{\frac qp} dt \right) ^{\frac1q} 
\end{align*}
and denote $L^q_{tx}(I)$ for $L^q_tL^q_x(I)$.

Let $\phi(\xi)$ be a radial smooth bump function supported in the ball
$\{\xi \in \R^d: |\xi |\le \frac{11}{10} \}$ such that $\phi(\xi)=1$ if $|\xi |\le 1$, and $0<\phi(\xi)<1$ if $1\le |\xi |\le \frac{11}{10}$. 
Then for $N\in 2^\N$, we denote frequency cut-off operators
\begin{align*}
   P_{\le N}g :&= \F^{-1} \left(
                           \phi\left(\frac \xi N \right)\F g     
                    \right),\\
   P_{>N}g:&=g-P_{\le  N} g,               
\end{align*} 
where $\F$ denotes the Fourier transform:
\[ \F g(\xi) :=\int_{\R^d}^{} e^{-ix\xi} g(x)dx, \]
and the inverse Fourier transform $\F^{-1}$ is given by
$\F^{-1}g(\xi) :=(2\pi)^{-d}\F g(-\xi)$.

For convenience, we give some special notation which will be used in the following sections.
Denote
$$s_c:=\frac{d}{2}-1,$$ 

We also denote the time-space norm
\[     \sc g:=\norm{|\na|^{s_c}g}_{L_{tx}^{\frac{2(d+2)}{d}}}, \]
and the following workspace norms: 
\begin{align*}
    \xsc g&:= \norm{g}_{L_t^{\oo}\dot H^{s_c}_x}
               +\sc g,\\
    \ysc g&:=\norm{\jd{\frac12}g}_{L_t^{\oo}\dot H^{s_c}_x}
    +\sc g, \\
    \zsc g&:=\ntx{|\na|^{s_c}g}{\frac{2(d+2)}{3d}}{\frac{2d(d+2)}{8+d^2}},
\end{align*}
where all the spacetime norms are taken over $(t,x)\in I\times \R^d$ with some interval $I\subset \R$.

\subsection{Basic lemmas}  
We give some basic and useful lemmas.

First, we recall the fractional Leibniz rule first proved by Christ and Weinstein \cite{christ1991}.
\begin{lem}[Fractional Leibniz rule]
    \label{chain rules}
    Let $s> 0$, $1< p,p_2,p_4\le \oo, 1< p_1, p_3 <\oo$ satisfy
    $\frac1p=\frac1{p_1} +\frac 1{p_2}
    =\frac1{p_3}+\frac1{p_4}$,
    there holds that for $f, g \in \mathscr S\left(\R^d\right)$,
    \[ 
    \norm{|\nabla|^s (fg)}_{L_x^p}
    \le C\left(\nl{|\nabla|^s f}{p_1}\nl{g}{p_2}
    + \nl{|\nabla|^s g}{p_3}\nl{f}{p_4}\right),
    \]
    where the implicit constant $C$ depends on $s, p,p_1,p_2,p_3$, and $p_4.$
\end{lem}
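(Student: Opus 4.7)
The plan is to prove the inequality via a Littlewood--Paley paraproduct decomposition. Let $\{P_N\}_{N\in 2^\Z}$ denote dyadic Littlewood--Paley projectors (obtained by refining the $P_{\le N}, P_{>N}$ introduced in Section \ref{notations}). Expanding $f = \sum_N P_N f$ and $g = \sum_M P_M g$, I would split the product into three pieces according to the relative sizes of the frequencies:
\[
fg = \Pi_{\mathrm{lh}}(f,g) + \Pi_{\mathrm{hl}}(f,g) + \Pi_{\mathrm{hh}}(f,g),
\]
where $\Pi_{\mathrm{lh}} = \sum_{N \ll M} P_N f \cdot P_M g$ is the low-high paraproduct, $\Pi_{\mathrm{hl}}$ is its symmetric high-low counterpart, and $\Pi_{\mathrm{hh}} = \sum_{N \sim M} P_N f \cdot P_M g$ is the high-high (diagonal) remainder. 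The two paraproducts $\Pi_{\mathrm{lh}}$ and $\Pi_{\mathrm{hl}}$ will produce the two terms on the right-hand side of the claim, and $\Pi_{\mathrm{hh}}$ can be absorbed into either of them.

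For the low-high piece, each summand $P_N f \cdot P_M g$ with $N \ll M$ has spatial Fourier support in an annulus of size $\sim M$, so the action of $|\nabla|^s$ on it can, via the Mikhlin multiplier theorem, be moved essentially onto $P_M g$: schematically $|\nabla|^s(P_N f \cdot P_M g) \approx P_N f \cdot \widetilde P_M(|\nabla|^s g)$ for a fattened projector $\widetilde P_M$. Summing over $N \ll M$ produces a low-frequency cutoff $P_{\ll M} f$, controlled pointwise by the Hardy--Littlewood maximal function $\mathcal{M} f$. The vector-valued Fefferman--Stein maximal inequality and the Littlewood--Paley square-function characterization of $L^{p_3}$ (valid precisely because $1 < p_3 < \infty$), combined with H\"older's inequality $1/p = 1/p_3 + 1/p_4$, then give
\[
\big\||\nabla|^s \Pi_{\mathrm{lh}}(f,g)\big\|_{L^p_x}
\lec \|\mathcal{M} f\|_{L^{p_4}_x}\,
\bigg\| \Big(\sum_M |\widetilde P_M|\nabla|^s g|^2\Big)^{1/2} \bigg\|_{L^{p_3}_x}
\lec \|f\|_{L^{p_4}_x}\,\||\nabla|^s g\|_{L^{p_3}_x}.
\]
By symmetry, $\Pi_{\mathrm{hl}}$ yields the other term with indices $(p_1,p_2)$.

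The diagonal piece is the most delicate. Since $P_N f \cdot \widetilde P_N g$ has Fourier support in $\{|\xi|\lec N\}$, I would write
\[
|\nabla|^s \Pi_{\mathrm{hh}}(f,g) = \sum_N \sum_{K \lec N} P_K\big(|\nabla|^s(P_N f \cdot \widetilde P_N g)\big),
\]
bound $|\nabla|^s P_K$ by $K^s$, factor $K^s = (K/N)^s N^s$, and sum the geometric series in $K$ (which is where the hypothesis $s > 0$ enters) to reduce to controlling $\sum_N N^s |P_N f|\cdot |\widetilde P_N g|$. Cauchy--Schwarz in $N$ together with the square-function characterization then absorbs $\Pi_{\mathrm{hh}}$ into either of the two right-hand side terms already produced.

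The main obstacle is the asymmetric hypothesis: $p_2, p_4$ are allowed to equal $\infty$, while $p_1, p_3$ must lie strictly in $(1,\infty)$. In the endpoint $L^\infty$ case, the square-function characterization of $L^{p_2}$ (or $L^{p_4}$) is unavailable, so one must instead bound the low-frequency factor pointwise by $\|f\|_{L^\infty_x}$ (or $\mathcal{M}f$) and reserve the Fefferman--Stein / square-function machinery for the high-frequency side, where the strict bound $1 < p_3 < \infty$ (resp. $p_1$) provides the needed room. A minor technical reduction to Schwartz representatives, so that all dyadic sums converge unconditionally, is automatic from $f,g \in \mathscr{S}(\R^d)$.
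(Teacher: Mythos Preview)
The paper does not prove this lemma; it simply cites it as a known result due to Christ and Weinstein \cite{christ1991}. Your paraproduct sketch is the standard modern route to the Kato--Ponce inequality and is essentially correct, so you have supplied what the paper omits rather than deviated from it.

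One small point worth tightening: your treatment of the diagonal piece $\Pi_{\mathrm{hh}}$ in the endpoint case $p_2=\infty$ (or $p_4=\infty$) is a little glib. After Cauchy--Schwarz in $N$ you invoke the square-function characterization on \emph{both} factors, but that characterization fails at $L^\infty$. The fix is standard: bound $|\widetilde P_N g|\lesssim \mathcal{M}g$ pointwise uniformly in $N$, pull $\sup_N|\widetilde P_N g|$ outside the $K$-sum, and then control the remaining square function $\big(\sum_N N^{2s}|P_N f|^2\big)^{1/2}$ in $L^{p_1}$ with $1<p_1<\infty$, using H\"older with the $L^\infty$ factor. This is exactly the asymmetry you flagged in your last paragraph for the paraproduct pieces; just make sure the same care is applied to $\Pi_{\mathrm{hh}}$.
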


Recall the definition of $\dot H_x^\ga$-admissible. 
\begin{definition}[$\dot H_x^\ga$-admissible]
    Let $I \subset \R$ and $ 0\le \ga\le 1$. The pair $(q, r)\in \R^2$ is called 
    $\dot H_x^\ga $-admissible, if  $(q,r)$ satisfies $2\le q, r\le \oo$, $(q,r,d)\neq (2, \oo,2)$,
    and 
    \begin{align*}
        \frac2q+\frac dr=\frac d 2-\ga .
    \end{align*}
    In particular, $(q,r)$ is $L_x^2$-admissible if $\ga=0.$
\end{definition}    
Now we can state the Strichartz estimates as follows, see \cite{keel1998} for instance.
\begin{lem}[Strichartz estimates for NLS] 
    \label{strichartz estimate for NLS}
    Let $I \subset \R$. Suppose that $(q,r )$  and $(\ti q, \ti r)$ are 
    $L_x^2$-admissible. Then 
    \[ \norm{e^{it\De }\varphi}_{L^q_tL^r_x(I)} \les  
    \norm{\varphi}_{L^2_x}, \]
    and
    \[ \norm{\int_{0}^{t}e^{i(t-s)\De }F(s)ds}_{{L^q_tL^r_x(\R)}}
    \les 
    \norm{F}_{L^{\ti q'}_tL^{\ti r '}_x(\R)}.
    \]
\end{lem}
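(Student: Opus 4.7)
The plan is to follow the standard Keel--Tao framework: first establish the two elementary ingredients for the Schr\"odinger propagator, then run a $TT^*$ argument to deduce the homogeneous estimate in the non-endpoint range, handle the endpoint separately, and finally upgrade to the inhomogeneous bilinear form with distinct admissible pairs.

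First I would record the two basic facts about the free propagator $U(t):=e^{it\Delta}$: the $L^2$ energy conservation $\|U(t)\varphi\|_{L^2_x}=\|\varphi\|_{L^2_x}$ (Plancherel), and the dispersive estimate $\|U(t)\varphi\|_{L^\infty_x}\lesssim |t|^{-d/2}\|\varphi\|_{L^1_x}$, obtained from the explicit convolution kernel $U(t)\varphi(x)=(4\pi it)^{-d/2}\int e^{i|x-y|^2/4t}\varphi(y)\,dy$. Complex interpolation between these endpoints yields
\begin{equation*}
\|U(t)\varphi\|_{L^r_x}\lesssim |t|^{-d(\frac12-\frac1r)}\|\varphi\|_{L^{r'}_x},\qquad 2\le r\le \infty.
\end{equation*}

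Next I would run the $TT^*$ argument. Setting $T\varphi:=U(t)\varphi$, boundedness $T\colon L^2_x\to L^q_tL^r_x$ is equivalent to boundedness of $TT^*F=\int U(t-s)F(s)\,ds$ from $L^{q'}_tL^{r'}_x$ to $L^q_tL^r_x$. Applying the dispersive estimate pointwise in $(t,s)$ and then Minkowski and Hardy--Littlewood--Sobolev in time gives
\begin{equation*}
\Bigl\|\int U(t-s)F(s)\,ds\Bigr\|_{L^q_tL^r_x}\lesssim \Bigl\|\int |t-s|^{-d(\frac12-\frac1r)}\|F(s)\|_{L^{r'}_x}\,ds\Bigr\|_{L^q_t}\lesssim \|F\|_{L^{q'}_tL^{r'}_x},
\end{equation*}
provided $\frac{2}{q}=d(\frac12-\frac1r)$ and $1<q'<q<\infty$, i.e.\ away from the endpoint $q=2$. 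This already yields the homogeneous estimate and the ``diagonal'' inhomogeneous estimate at $(\tilde q,\tilde r)=(q,r)$ in the non-endpoint range.

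The main obstacle is the endpoint $q=2$, which for $d\ge 3$ corresponds to $r=\frac{2d}{d-2}$; here HLS just fails and one must argue more carefully. I would invoke the Keel--Tao bilinear/atomic decomposition: decompose the bilinear form $\iint \langle U(t)F(t),U(s)G(s)\rangle\mathbf{1}_{t>s}\,dt\,ds$ dyadically in $|t-s|\sim 2^j$, bound each dyadic piece by bilinear interpolation between the $L^2\times L^2$ and $L^1\times L^1$ dispersive bounds with a small gain $2^{-\varepsilon j}$ off the line of interpolation, and finally sum using the atomic $L^{2,1}$--$L^{2,\infty}$ real-interpolation trick to recover the strong $L^2_t$ norm. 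The hypothesis $(q,r,d)\neq(2,\infty,2)$ is exactly what excludes the forbidden case $d=2$, $r=\infty$.

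Finally, to obtain the full bilinear inhomogeneous estimate with two possibly different admissible pairs $(q,r)$ and $(\tilde q,\tilde r)$, I would combine the homogeneous estimate in dual form
\begin{equation*}
\Bigl\|\int U(-s)F(s)\,ds\Bigr\|_{L^2_x}\lesssim \|F\|_{L^{\tilde q'}_t L^{\tilde r'}_x}
\end{equation*}
with the homogeneous estimate for $(q,r)$ to control the untruncated operator $\int_{\R}U(t-s)F(s)\,ds$, and then remove the retardation $s<t$ via the Christ--Kiselev lemma; this works whenever $\tilde q'<q$, which holds in the entire non-double-endpoint range and, together with the Keel--Tao endpoint bound, covers all admissible pairs allowed in the statement. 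This completes the proof.
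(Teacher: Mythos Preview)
Your proposal is correct and outlines precisely the standard Keel--Tao argument; the paper itself does not give a proof of this lemma but simply cites \cite{keel1998}, so your sketch is exactly the proof being referenced. One small remark: invoking Christ--Kiselev at the end excludes the double-endpoint inhomogeneous case $q=\tilde q=2$, which Keel--Tao handle directly within their bilinear decomposition, but this does not affect the range of pairs actually used in the paper.
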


\begin{lem}[Strichartz estimates for NLKG]
   \label{strichartz estimate}
    Let $I \subset \R$. Suppose that $(q,r )$  and $(\ti q, \ti r)$ are 
    $L_x^2$-admissible. Then 
    \[ \norm{e^{it\jd{}}\varphi}_{L^q_tL^r_x(I)} \les  
    \norm{\jd{\frac{d+2}{2}\left(
            \frac12-\frac 1r\right)}\varphi}_{L^2_x}, \]
        and
     \[ \norm{\int_{0}^{t}e^{i(t-s)\jd{}}F(s)ds}_{{L^q_tL^r_x(\R)}}
        \les 
        \norm{\jd{\frac{d+2}{2}\left(
                    1-\frac 1r - \frac 1{\ti r}
                \right)}F}_{L^{\ti q'}_tL^{\ti r'}_x(\R)}.
      \]
\end{lem}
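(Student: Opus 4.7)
The plan is to follow the standard Keel-Tao framework, which reduces the stated Strichartz inequalities to (i) the $L^2_x$ conservation $\norm{e^{it\jd{}}\varphi}_{L^2_x}=\norm{\varphi}_{L^2_x}$ (immediate from Plancherel, since $e^{it\jb{\xi}}$ has modulus one) and (ii) a suitable frequency-localized dispersive estimate. The derivative loss $s(r):=\frac{d+2}{2}(\frac12-\frac1r)$ in the homogeneous bound, and the corresponding $s(r)+s(\ti r)$ loss in the inhomogeneous bound, is exactly what the dispersive decay of $e^{it\jd{}}$ produces at high frequencies.

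First I would perform a Littlewood-Paley decomposition $\varphi=P_{\le 1}\varphi+\sum_{N\in 2^\N,\,N\ge 2}P_N\varphi$ and analyze the kernel of $e^{it\jd{}}P_N$, namely $K_N(t,x)=\int e^{i(x\cdot\xi+t\jb{\xi})}\phi(\xi/N)\,d\xi$, by stationary phase. A direct computation gives
\begin{align*}
\p_i\p_j\jb{\xi}=\frac{\de_{ij}}{\jb{\xi}}-\frac{\xi_i\xi_j}{\jb{\xi}^3},
\end{align*}
whose eigenvalues are $\jb{\xi}^{-1}$ (with multiplicity $d-1$, transverse to $\xi$) and $\jb{\xi}^{-3}$ (radial), so $|\det\mathrm{Hess}\,\jb{\xi}|\sim\jb{\xi}^{-(d+2)}$ on $|\xi|\sim N$. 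Standard stationary phase then yields
\begin{align*}
\norm{e^{it\jd{}}P_Nf}_{L^\oo_x}\les |t|^{-d/2}\jb{N}^{(d+2)/2}\norm{P_Nf}_{L^1_x}
\end{align*}
for $|t|\ges 1$, uniformly in $N\ge 1$; for $|t|\les 1$ the same bound (with $|t|^{-d/2}$ replaced by a constant) holds by Young and Bernstein. For the low-frequency piece $P_{\le 1}$ the Hessian is of size $O(1)$ and nondegenerate, so the same dispersive bound holds without any $N$-loss, matching the Schrödinger dispersive estimate after factoring out the phase $e^{it}$ coming from $\jb{\xi}-1\sim|\xi|^2/2$ near the origin.

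With these dispersive bounds in hand, the abstract Keel-Tao theorem of \cite{keel1998}, applied to the renormalized propagator $\jb{N}^{-(d+2)/2}e^{it\jd{}}P_N$ with dispersion parameter $\sigma=d/2$, gives, for every $L^2_x$-admissible pair $(q,r)$,
\begin{align*}
\norm{e^{it\jd{}}P_Nf}_{L^q_tL^r_x}\les \jb{N}^{s(r)}\norm{P_Nf}_{L^2_x},
\end{align*}
where the exponent $s(r)=\frac{d+2}{2}(\frac12-\frac1r)$ emerges from Riesz-Thorin interpolation between the dispersive and $L^2$ bounds inside the $TT^*$ step (observe $(d+2)(\frac14-\frac1{2r})=s(r)$). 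Square-summing over dyadic $N$ via the Littlewood-Paley square function inequality on $L^q_tL^r_x$ (valid for $1<r<\oo$, with the case $r=2$ trivial by Plancherel) and adding the low-frequency contribution produces the homogeneous estimate with weight $\jd{s(r)}$.

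The inhomogeneous estimate follows by a $TT^*$ duality combined with the Christ-Kiselev lemma: the composition $e^{i(t-s)\jd{}}$ maps $L^{\ti r'}_x\to L^r_x$ with a loss of $s(r)+s(\ti r)=\frac{d+2}{2}(1-\frac1r-\frac1{\ti r})$ derivatives by the same Riesz-Thorin step (again frequency by frequency), while the time integration is handled by Hardy-Littlewood-Sobolev in the non-endpoint case and by the Keel-Tao endpoint argument when $(q,r)$ or $(\ti q,\ti r)$ is the endpoint pair. The main technical point throughout is the careful stationary phase analysis at high frequencies that pins down the sharp power $(d+2)/2$ of $N$ in the dispersive estimate; once this is established, the remainder is a mechanical combination of Keel-Tao and Littlewood-Paley theory, in direct analogy with the Schrödinger case treated in \cite{keel1998}.
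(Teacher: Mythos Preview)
The paper does not prove this lemma at all; it is stated as a standard preliminary result in Section~\ref{preliminary} alongside the Schr\"odinger Strichartz estimates (Lemma~\ref{strichartz estimate for NLS}), with the citation \cite{keel1998} covering the abstract framework. Your sketch is the standard derivation of the Klein--Gordon Strichartz estimates and is essentially correct: the crucial ingredient beyond \cite{keel1998} is the frequency-localized dispersive bound, and your Hessian computation $\det\mathrm{Hess}\,\jb{\xi}\sim\jb{\xi}^{-(d+2)}$ correctly identifies the derivative loss $\frac{d+2}{2}(\frac12-\frac1r)$. So there is nothing to compare---you have supplied a proof where the paper simply quotes the result.

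One minor caution: your treatment of the short-time and transitional regimes for the dispersive estimate is a bit telegraphic. At frequency $N\gg 1$ the radial Hessian eigenvalue is only $\sim N^{-3}$, so the full $|t|^{-d/2}$ stationary-phase decay genuinely requires $|t|\gtrsim N$; for $1\lesssim |t|\lesssim N$ one has the wave-type decay $|t|^{-(d-1)/2}N^{(d+1)/2}$ instead, and the trivial bound $N^d$ for $|t|\lesssim 1$. All three are dominated by $|t|^{-d/2}\jb{N}^{(d+2)/2}$ in their respective ranges, so the endpoint dispersive input to Keel--Tao is as you claim, but the sentence ``for $|t|\lesssim 1$ the same bound\ldots holds by Young and Bernstein'' as written would give $N^d$ rather than $N^{(d+2)/2}$, which is stronger than needed but not literally ``the same bound''. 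This is a cosmetic issue and does not affect the validity of the argument.
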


\begin{remark}\label{rem:strichartz estimates}
    We give a comparison in the framework of Strichartz estimates between cubic NLS and NLKG. 
    More specifically, 
    assuming the following Duhamel formulas:
    \[ \mbox{\emph{NLS}}: \qquad
    \varphi_1(t) =e^{it\De }\varphi_1(0)
    +\int_0^t e^{i(t-s)\De} \left( |\varphi_1|^2\varphi_1\right) (s)ds,  \] 
    and 
    \[ \mbox{\emph{NLKG}}: \qquad
    \varphi_2(t) =e^{it\jd{}  }\varphi_2(0)
    +\int_0^t e^{i(t-s)\jd{}}\jd{-1}\left( |\varphi_2|^2\varphi_2\right)(s)ds.  \]
    Then by Lemmas \ref{strichartz estimate for NLS} and \ref{strichartz estimate}, we have
    \[ \xsc{\varphi_1} 
            \les \nh{\varphi_1(0)}{s_c} 
                +\zsc{ 						    
                    |\varphi_1|^2\varphi_1}
            \les \nh{\varphi_1(0)}{s_c}
                + \sc{\varphi_1}^3,\]
    and
    \[ \ysc{\varphi_2} 
            \les \nh{\jd{\frac 12}\varphi_2(0)}{s_c}
                + \zsc{|\varphi_2|^2\varphi_2}
            \les \nh{\jd{\frac 12}\varphi_2(0)}{s_c}
                +\sc{\varphi_2}^3.
            \]                     
\end{remark} 

We review the scattering results for the defocusing cubic NLS, serving as the foundational facts in this paper. For $d=2$, the scattering was initially established by Killip, Tao, and Visan \cite{killip2009} in the radial case.  Subsequently, Dodson \cite{dodson2016} extended the result to the nonradial case. 
As for the $d=3$ case, the scattering was proved by Lin-Strauss \cite{lin1978}, and Ginibre-Velo \cite{ginibre1992}. 
We summarize the scattering results for 2D and 3D cubic NLS as follows:
    
\begin{lem}[Scattering theory for cubic NLS]
    \label{scattering theory}
    Let $d=2,3$, and $ v_0\in H^s\cap H^{d-2}$, $s\ge0$.
    Assume that $v$ satisfies NLS \eqref{eq:v} and $(q,r )$ is $L^2$-admissible. Then 
    \[ \ntx{|\nabla|^sv}{q}{r} 
    \le C\left(\norm{v_0}_{H_x^{d-2}}\right) \nh{v_0}{s}. 
        \]
\end{lem}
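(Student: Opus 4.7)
The plan is to combine the cited scattering theorems at critical regularity with a standard persistence-of-regularity bootstrap built on the Strichartz estimate for NLS (Lemma \ref{strichartz estimate for NLS}) and the fractional Leibniz rule (Lemma \ref{chain rules}).

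First, from Killip--Tao--Visan \cite{killip2009} and Dodson \cite{dodson2016} in dimension $d=2$, and from Lin--Strauss \cite{lin1978} and Ginibre--Velo \cite{ginibre1992} in dimension $d=3$, the defocusing cubic NLS \eqref{eq:v} with data in $H^{d-2}_x$ is globally well-posed and scatters. In particular, both results provide a global bound on the critical spacetime norm of the form
\[
\left\| v \right\|_{L^{2(d+2)/d}_{tx}(\R\times\R^d)}\le M\bigl(\|v_0\|_{H^{d-2}_x}\bigr),
\]
which in turn permits a partition of $\R$ into finitely many consecutive subintervals $I_1,\ldots,I_J$, with $J=J(M)$, on each of which $\left\| v \right\|_{L^{2(d+2)/d}_{tx}(I_j)}\le \eta$ for a small threshold $\eta>0$ to be fixed later.

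Next, on each $I_j$ with left endpoint $t_{j-1}$, the Duhamel formula for $v$ combined with Lemma \ref{strichartz estimate for NLS} and with Lemma \ref{chain rules} applied to the cubic nonlinearity $|v|^2 v$ yields schematically
\[
\left\| |\na|^s v \right\|_{L^q_tL^r_x(I_j)}\les \left\| |\na|^s v(t_{j-1})\right\|_{L^2_x}+\|v\|^{2}_{L^{2(d+2)/d}_{tx}(I_j)}\left\| |\na|^s v \right\|_{L^{q_0}_tL^{r_0}_x(I_j)}
\]
for every $L^2$-admissible target pair $(q,r)$ and a compatible $L^2$-admissible pair $(q_0,r_0)$. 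Taking first $(q,r)=(q_0,r_0)$ and choosing $\eta$ small enough to absorb the nonlinear term into the left-hand side yields $\left\||\na|^s v\right\|_{L^{q_0}_tL^{r_0}_x(I_j)}\les \left\||\na|^s v(t_{j-1})\right\|_{L^2_x}$; feeding this back into the schematic bound then gives the same estimate for arbitrary admissible $(q,r)$.

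Finally, iterating over the $J$ subintervals with the inductive endpoint bound $\left\||\na|^s v(t_j)\right\|_{L^2_x}\les \left\||\na|^s v(t_{j-1})\right\|_{L^2_x}$ at each step produces the stated global estimate, with an effective constant $C=C(\|v_0\|_{H^{d-2}_x})$ of the form $\exp\bigl(c\,J(M)\bigr)$. The only genuine technical hurdle is the fractional Leibniz estimate for non-integer $s$, which is supplied exactly by Lemma \ref{chain rules}; closing the dual Strichartz exponents for every admissible $(q,r)$ is a routine verification, handled by choosing $(q_0,r_0)=\bigl(\tfrac{2(d+2)}{d},\tfrac{2(d+2)}{d}\bigr)$ in the bootstrap and invoking Lemma \ref{strichartz estimate for NLS} once more.
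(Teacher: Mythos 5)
Your argument is essentially correct, but note that the paper does not actually prove this lemma: it is stated as a summary of the cited scattering results (Killip--Tao--Visan and Dodson for $d=2$; Lin--Strauss and Ginibre--Velo for $d=3$), with the persistence-of-regularity step left implicit. What you wrote is precisely the standard derivation that justifies the statement from those citations: global finiteness of the scale-invariant spacetime norm, a partition of $\R$ into finitely many intervals on which that norm is small, an absorption step on each interval via Lemma \ref{strichartz estimate for NLS} and Lemma \ref{chain rules}, and iteration of the endpoint $\dot H^s$ bound (using the admissible pair $(\infty,2)$) with a constant growing exponentially in the number of intervals. The one imprecision worth fixing is the quantity that carries the smallness: for $d=3$ the H\"older exponents in the dual Strichartz norm do not close with three copies of $\|v\|_{L^{10/3}_{tx}(I_j)}$; the correct small quantity is the critical norm $\||\na|^{s_c}v\|_{L^{2(d+2)/d}_{tx}(I_j)}$ with $s_c=\tfrac d2-1=\tfrac12$, two factors of which are converted to $L^{10/3}_tL^{15/2}_x$ by Sobolev embedding --- this is exactly the content of Remark \ref{rem:chain rules} and of the $S_{s_c}$ and $Z_{s_c}$ norms used throughout the paper, and this critical norm is still globally finite by the cited scattering theory. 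For $d=2$ one has $s_c=0$ and your formulation is literally correct. With that adjustment the bootstrap closes as you describe.
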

\begin{remark}\label{rem:chain rules}
    It follows from Lemmas \ref{chain rules}, \ref{strichartz estimate for NLS}, and
    \ref{strichartz estimate} that  
    for $ -s_c \le \ga\le  0$ and $ d=2,3,$
    \[        \zsc{ 
        \naabs{\ga} \left( \varphi_1 \varphi_2\varphi_3 \right)}
    \les \sc{ \naabs{\ga} \varphi_1} \sc{\varphi_2} \sc{\varphi_3}. \]   
\end{remark}

\section{The high-order asymptotic expansion}
\label{the high-order asymptotic expansion}

In this section, we investigate the high-order asymptotic expansion of $u^\ve(t)$. To achieve this, we need to isolate the first-order error terms and identify the $O(\ve^2)$ leading term. 
To this end, we first give a general scheme to show which type of nonlinearity would lead to $O(\ve^a)$-error.

\subsection{Nonlinearities which implies $O(\ve^a)$ error}

 Let $d=2,3.$ Suppose that $r(t,x)$ satisfies the following general equation:
\eq{
    \label{the equation of r with initial data}
    &\ve^2\p_{tt} r-\De r+\frac 1{\ve^2} r
    + F(\ve,t,x)=0,
    \\
    &r(0,x)=r_0(x),
\quad
\p_t r(0,x) = \wt r_0(x),  
}
with initial data satisfying
\begin{align}
    \label{initial data}
    \norm{r_0}_{H_x^{\frac 12}}\les \ve^{a}
    \quad
    \mbox{ and }
    \quad
    \nl{\wt r_0}{2} \les \ve^{a-2},
    \qquad a\ge 2. 
\end{align}
We shall show that there exist three types of nonlinearity $F(\ve, t, x)$, which imply that the $L^2$-norm of $r$ persists the same error estimate as the initial data $r_0$ given in \eqref{initial data}:
\begin{itemize}
\item 
$F(\ve, t, x)$ is of the type $\ve^{a}\cdot O(1)$, which means that it possesses the same smallness as the initial data.
\item 
$F(\ve, t, x)$ is of the type $\ve^{a-2}e^{ i\frac{m}{\ve^2}t}\cdot O(1)$, $m\ne 1$, $m\in \R$ with oscillation.
\item 
$F(\ve, t, x)$ involves a bootstrap structure.
\end{itemize}
These facts can be stated precisely as Lemmas \ref{lem:estimate of r with small perturbation}--\ref{lem:estimate of r with bootstrap}.
\begin{lem}[Small perturbation]
    \label{lem:estimate of r with small perturbation}
    Let $d=2,3$ and $r(t,x): \R \times \R^d \to \R$ is a real-valued function
    and verifies equation \eqref{the equation of r with initial data}  with initial data conditions \eqref{initial data}. 
    Assume that the nonlinearity $F(\ve,t,x )$ satisfies
     $$F(\ve,t,x ) =\ve^{a} G(\ve, t,x), \quad a\ge 2,$$ 
     and $G(\ve, t,x)$ satisfies 
    \begin{align*}
        \zo{G}\le C_1.
    \end{align*}   
    Then we have 
    \begin{align*}
        \ntx{r}{\oo}{2}\le C_2 \ve^{a}.
    \end{align*}
    Here constants $C_1, C_2$ are independent of $\ve $ and $t$.
\end{lem}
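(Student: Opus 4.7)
My plan is to treat \eqref{the equation of r with initial data} as a linear forced Klein-Gordon equation, write its Duhamel representation, and then estimate each piece via Plancherel for the linear contributions and via a rescaled version of Lemma~\ref{strichartz estimate} for the forcing. Setting $\om := \ve^{-2}\langle\ve\na\rangle$, the equation rewrites as $\p_{tt}r+\om^2 r=-\ve^{-2}F$, whose solution is
\[
r(t)=\cos(t\om)\,r_0+\om^{-1}\sin(t\om)\,\wt r_0-\ve^{-2}\int_0^t \om^{-1}\sin\bigl((t-s)\om\bigr)F(s)\,ds.
\]

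The two linear terms are bounded directly on the Fourier side. By Plancherel, $\|\cos(t\om)r_0\|_{L^2_x}\le \|r_0\|_{L^2_x}\lesssim \ve^a$, while $\om^{-1}=\ve^2\langle\ve\na\rangle^{-1}$ has $L^2$-operator norm at most $\ve^2$, yielding $\|\om^{-1}\sin(t\om)\wt r_0\|_{L^2_x}\le \ve^2\|\wt r_0\|_{L^2_x}\lesssim \ve^2\cdot\ve^{a-2}=\ve^a$.

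The substantive step is the Duhamel term. Substituting $F=\ve^a G$ and factoring $\om^{-1}=\ve^2\langle\ve\na\rangle^{-1}$, I must bound
\[
\frac{\ve^a}{2i}\int_0^t \langle\ve\na\rangle^{-1}\bigl[e^{i(t-s)\om}-e^{-i(t-s)\om}\bigr]G(s)\,ds
\]
in $L^\infty_tL^2_x$. I rescale $\tau=t/\ve^2$, $y=x/\ve$, which turns $e^{\pm it\om}$ into the standard Klein-Gordon propagator $e^{\pm i\tau\jd{}}$ and $\langle\ve\na_x\rangle^{-1}$ into $\jd{-1}$. The inhomogeneous Strichartz estimate of Lemma~\ref{strichartz estimate}, applied with $(q,r)=(\infty,2)$ on the left and the $L^2_x$-admissible pair $(\wt q,\wt r)$ whose conjugate $(\wt q',\wt r')$ matches the $Z_0$ indices on the right, then handles the rescaled integral. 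The net derivative exponent $\frac{d+2}{2}(\frac12-\frac1{\wt r})-1$ combines the exponent from Lemma~\ref{strichartz estimate} with the $\jd{-1}$-smoothing; a short check gives $-\frac12$ for $d=2$ and $-\frac56$ for $d=3$, so no derivative on $\wt G$ is needed.

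What remains is bookkeeping of the scaling factors. The $L^2_x$ norm picks up $\ve^{d/2}$ relative to $L^2_y$, the $Z_0$ norm picks up $\ve^{2/p_0+d/q_0}$, and a direct computation with the indices $p_0=\frac{2(d+2)}{3d}$, $q_0=\frac{2d(d+2)}{8+d^2}$ gives $2/p_0+d/q_0=(d+4)/2$. The combined $\ve$-power is therefore $\ve^{a+2}\cdot\ve^{-(d+4)/2}\cdot\ve^{d/2}=\ve^a$, which closes the estimate. The main obstacle is nothing beyond verifying these two facts---non-positive net derivative exponent and exact cancellation of scaling to $\ve^a$---each of which reduces to a short arithmetic check with the paper's indices.
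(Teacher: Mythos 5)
Your proposal is correct and is essentially the paper's argument: both reduce to the standard Klein--Gordon propagator by the scaling $t\mapsto \ve^2 t$, $x\mapsto \ve x$, apply the inhomogeneous Strichartz estimate of Lemma \ref{strichartz estimate} with the $L^\oo_tL^2_x$ / dual-$Z_0$ pairing, and verify that the powers of $\ve$ from the scaling cancel to give $\ve^a$. The only difference is cosmetic --- you use the explicit $\cos(t\om)/\sin(t\om)$ variation-of-parameters formula where the paper uses the first-order complexified reduction $\phi=(\p_t+i\jd{})\jd{-1}R$ (which additionally records the $S_0$ Strichartz component needed in later lemmas, but not for this statement).
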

\begin{proof}
     First, we introduce the useful scaling transformation $S_\ve $ defined by
    \[ S_\ve g(t,x):= \ve g(\ve^2 t, \ve x). \] 
Note that the $X_{s_c}$ norm is invariant under the scaling transformation $S_\ve$, that is 
\[ \xsc{S_\ve g} =\xsc{g}.  \] 
    We set 
    \begin{align*}
        R= S_\ve r, 
    \end{align*}
    and denote
    \begin{align*}
        \phi = (\p_t +i \jd{})\jd{-1}{R}, 
    \end{align*}
    then we have 
    \eq{ \label{3.1}
        &{R}=\im \phi,\\
        &\p_t {R} =\re \jd{}\phi.}
    The equation \eqref{the equation of r with initial data} of $r$ can be rewritten as follows: 
    \begin{align}
        (\p_t -i \jd{})\phi =-\jd{-1}\ve^2 S_\ve F(\ve,t,x ),
        \label{}
    \end{align}
    with initial data 
    \begin{equation}
        \label{3.3}
        \phi(0):={\phi_0}=\jd{-1}\p_t{R}(0)+i{R}(0)=\ve^2\jd{-1}S_\ve (\p_t {r}(0))
        +iS_\ve r(0).
    \end{equation}
    The Duhamel formula reads
    \begin{align*}
        \phi(t)=e^{it\jd{}}{\phi_0} 
        -\int_{0}^{t}e^{i(t-s)\jd{}}\jd{-1}\ve^2 S_\ve F(\ve,s,x ) ds.
    \end{align*}
    By Lemma \ref{strichartz estimate}, Remark \ref{rem:strichartz estimates}, and noting that
    \begin{align*}
        \zo{S_\ve F} = \ve^{-2-s_c} \zo{F},
    \end{align*} 
    we have 
    \begin{align}
        \label{3.4}
        \yo{\phi}
        &\les \yo{e^{it\jd{}}{\phi_0} }
        + \ve^{2} \zo{S_\ve F(\ve,t,x )}
        \notag \\
        &\les \yo{e^{it\jd{}}{\phi_0} }
        + \ve^{-s_c} \zo{F(\ve,t,x )} .  
    \end{align}
    It follows from \eqref{initial data}, \eqref{3.3}, and Lemma \ref{strichartz estimate} that
    \begin{align}
        \label{3.5}
        \yo{e^{it\jd{}}{\phi_0} }
        &\les \nl{\jd{\frac 12}\phi_0}{2}
        \notag\\
        &\les \ve^{2-s_c}  
        \nl{\tilde r_0}{2}
        +\ve^{-s_c}\nl{\jd{\frac 12}r_0}{2}
        \notag\\
        &\les \ve^{a-s_c}.
    \end{align}
    For $F(t,x )=\ve^{a} G(\ve,t,x )$, we obtain 
    \begin{align*}
        \zo{F} &\les \ve^{a}\zo{G}
        \les \ve^{a}. 
    \end{align*}
    In view of the above estimate, \eqref{3.4}, and \eqref{3.5},
    we have 
    \begin{align*}
        \yo{\phi} \les \ve^{a-s_c}. 
    \end{align*}
    This inequality and \eqref{3.1} give that 
    \begin{align*}
        \ntx{r}{\oo}{2}
        =\ve^{s_c } \ntx{R}{\oo}{2}
        \les \ve^{s_c }\yo{\phi}
        \les 
        \ve^{a}, 
    \end{align*}
    which finishes the proof of this lemma.
\end{proof}
\begin{lem}[Oscillation]
    \label{lem:estimate of r with high oscillation}
    Under the assumptions of Lemma \ref{lem:estimate of r with small perturbation} with $F(\ve, t,x )$ replaced by 
    $$F(\ve,t,x )=\ve^{a-2}e^{\frac { imt}{\ve^2}}G(t,x ), \quad
     m \neq 1, m\in \R, a\ge 2 $$   
     where $G(t,x)$ is independent of $\ve$ and satisfies 
    \begin{align*}
        \xo{G}, \zo{\De G}, \zo{\p_t G}
        \le C_1.
    \end{align*}
        Then we have 
    \begin{align*}
        \ntx{r}{\oo}{2}\le C_2\ve^{a}. 
    \end{align*}
    Here constants $C_1, C_2$ are independent of $\ve $ and $t$.
\end{lem}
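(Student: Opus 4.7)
The plan is to follow the rescaling strategy of Lemma \ref{lem:estimate of r with small perturbation} but exploit the oscillation in $s$ via integration by parts. As before, I set $R = S_\ve r$, $\phi = (\p_t + i\jd{})\jd{-1}R$, and note that under the scaling $\ve^2 S_\ve F = \ve^a e^{ims}S_\ve G$. The Duhamel formula then reads
\begin{align*}
\phi(t) = e^{it\jd{}}\phi_0 - \ve^a\int_0^t e^{i(t-s)\jd{}}e^{ims}\jd{-1} S_\ve G(s,\cdot)\,ds.
\end{align*}
A direct Strichartz bound would only yield $O(\ve^{a-2})$ because $\zo{S_\ve G} = \ve^{-2-s_c}\zo{G}$; the oscillation $e^{ims}$ must actively produce the missing $\ve^2$.

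The central idea is the identity $e^{i(t-s)\jd{}} e^{ims} = \frac{1}{i(m-\jd{})}\p_s[e^{i(t-s)\jd{}}e^{ims}]$, whose invertibility of $m - \jd{}$ requires $m \neq 1$. Integration by parts in $s$, together with $\p_s S_\ve G = \ve^2 S_\ve(\p_t G)$, transforms the integral into
\begin{align*}
\frac{\jd{-1}}{i(m-\jd{})}\Big[e^{imt}S_\ve G(t,\cdot) - e^{it\jd{}}S_\ve G(0,\cdot) - \ve^2\int_0^t e^{i(t-s)\jd{}}e^{ims} S_\ve(\p_t G)(s,\cdot)\,ds\Big].
\end{align*}
The remaining integral now carries an extra $\ve^2$ factor which exactly cancels the $\ve^{-2}$ loss in the scaling; by Strichartz (Lemma \ref{strichartz estimate}) it contributes $\ve^{a-s_c}\zo{\p_t G}$ to $\yo{\phi}$, the target order. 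The $s=0$ boundary term is controlled by linear Strichartz applied to the data $\jd{-1}(m-\jd{})^{-1}S_\ve G(0)$ in $H^{s_c+1/2}$, using $\xo{G}$; the $s=t$ boundary term is bounded directly in $L^\infty_t\dot H^{s_c+1/2}$ via the scaling $\|S_\ve h\|_{\dot H^{s_c+1/2}} = \ve^{1/2}\|h\|_{\dot H^{s_c+1/2}}$.

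The principal technical obstacle is controlling the Fourier multiplier $(m-\jd{})^{-1}$. Its symbol $(m-\langle\xi\rangle)^{-1}$ is smooth and uniformly bounded when $m < 1$, but when $m > 1$ it degenerates on the sphere $|\xi|^2 = m^2-1$. The key observation is that $\widehat{S_\ve G}(\xi) = \ve^{1-d}\widehat G(\xi/\ve)$ is effectively concentrated at $|\xi| \lesssim \ve$, where $\langle\xi\rangle$ is close to $1$, hence uniformly far from $m$ for $\ve$ small. A Littlewood--Paley decomposition $S_\ve G = P_{\le 1}S_\ve G + P_{>1}S_\ve G$ makes this rigorous: on the low-frequency piece, Mihlin--H\"ormander yields boundedness of the multiplier; on the high-frequency piece, $\jd{-1}(m-\jd{})^{-1}$ decays like $\jd{-2}$, and the resulting loss is absorbed by the hypothesis $\zo{\De G}\le C_1$, which quantifies the smallness of $S_\ve G$ above frequency one. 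Collecting these pieces yields $\yo{\phi} \lesssim \ve^{a-s_c}$, and undoing the rescaling exactly as in Lemma \ref{lem:estimate of r with small perturbation} gives $\ntx{r}{\infty}{2} = \ve^{s_c}\ntx{R}{\infty}{2} \lesssim \ve^{s_c}\yo{\phi} \lesssim \ve^a$, as desired.
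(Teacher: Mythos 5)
Your argument is essentially the paper's proof: the same rescaling, the same integration by parts in $s$ using the non-resonance $m\neq\jd{}$, and the same low/high frequency split in which the high piece gains the missing $\ve^2$ from $\zo{\De G}$ via $\De S_\ve G=\ve^2 S_\ve(\De G)$.

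One point needs fixing. You cut at the fixed frequency $1$ and assert that Mihlin--H\"ormander bounds $(m-\jd{})^{-1}$ on the low piece, but for $1<m<\sqrt{1+(11/10)^2}$ the resonant sphere $|\xi|=\sqrt{m^2-1}$ lies inside the support of $P_{\le 1}$, so the symbol is genuinely singular there and your low-frequency multiplier bound fails; this also contradicts your own ``key observation'' that one should stay near $|\xi|\lesssim\ve$. The remedy is exactly what the paper does: choose the cutoff radius $k=k(m)$ small enough that $m-\langle\xi\rangle$ is bounded away from zero on $\supp\,\phi(\cdot/k)$ (any fixed $k$ works for the high piece, since it only costs a factor $k^{-2}$ against $\zo{\De G}$, independent of $\ve$). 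With that adjustment your proof goes through verbatim.
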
   
\begin{proof}
    We perform the same procedure as shown in the proof of Lemma \ref{lem:estimate of r with small perturbation} and obtain that 
    \begin{align*}
        \yo{\phi}
        &\les \yo{e^{it\jd{}}{\phi_0} }
        + \ve^{a} \yo{e^{it\jd{}}\int_{0}^{t}e^{is(  m-\jd{})}\jd{-1} S_\ve G(s,x ) ds}.
    \end{align*}
    Invoking \eqref{3.5} and arguing like Lemma \ref{lem:estimate of r with small perturbation},
    it remains to show the $Y_0$-norm bound is $O(\ve^{s_c})$ for the following integral:
    \begin{align*}
         e^{it\jd{}}\int_{0}^{t}e^{is( m-\jd{})}\jd{-1} S_\ve G(s,x ) ds.
    \end{align*}  
    We only consider the case $m>0$ since the case $m\le0$ is similar and easier.
    For fixed $m$, we divide the above integral into the $P_{\le k}$ part and the $P_{>k}$ part,
    \begin{align*}
        (P_{\le k}+P_{>k} ) e^{it\jd{}}\int_{0}^{t}e^{is( m-\jd{})}\jd{-1} S_\ve G(s,x ) ds,
    \end{align*}
    where $k $ is some positive constant satisfying
    \begin{align*}
        \F \Big({P_{\le k}\big(
            {m-\jd{}}} \big)
        \Big)\neq 0. 
    \end{align*}
   Then for low frequency $P_{\le k}$ part, integration by parts shows that 
    \begin{align*}
        &P_{\le k}e^{it\jd{}}\int_{0}^{t}e^{is( m-\jd{})}\jd{-1} S_\ve G(s,x ) ds
        \\
        &\quad
        =\frac{\jd{-1}}{i(m-\jd{})}P_{\le k}
        \Big[
        e^{imt}S_\ve G(t,x )-e^{it\jd{}}S_\ve G(0,x)
        -\ve^2e^{it\jd{}}\int_{0}^{t}e^{is(m-\jd{})}S_\ve
        (\p_sG(s,x))ds
        \Big].
    \end{align*}
    This implies that 
    \begin{align*}
        &\yo{P_{\le k}e^{it\jd{}}\int_{0}^{t}e^{is( m-\jd{})}\jd{-1} S_\ve G(s,x ) ds}
        \\
        &\quad
        \les \yo{\jd{-1}S_\ve G}+\yo{\jd{-1}e^{it\jd{}}S_\ve G(0, x)}
        +\ve^2\zo{S_\ve (\p_t G)}
        \\
        &\quad
        \les 
        \xo{S_\ve G}
        +\ve^2\zo{S_\ve (\p_t G)}
        \\
        &\quad
        \les \ve^{-s_c}(\xo{G}+\zo{\p_tG})
        \\
        &\quad
        \les \ve^{-s_c}.
    \end{align*}
     For the high-frequency case, we have 
    \begin{align*}
        \yo{P_{>k} e^{it\jd{}}\int_{0}^{t}e^{is( m-\jd{})}\jd{-1} S_\ve G(s,x ) ds}
        &\les \zo{P_{>k}S_\ve G(s,x ) }\\
        &\les \zo{\De S_\ve G(s,x ) }\\
        &\les \ve^{-s_c} \zo{\De G}\\
        &\les \ve^{-s_c}.
    \end{align*}
    Combining estimates of both cases for $P_{\le k}$ and $ P_{>k}$, we establish the desired conclusion.
\end{proof} 
\begin{lem}[Bootstrap structure]
    \label{lem:estimate of r with bootstrap}
    Under the assumptions of Lemma \ref{lem:estimate of r with small perturbation} but with $F(\ve, t,x )$ replaced by 
    $$F(\ve,t,x)= (g_1(\ve, t,x ))^2r+g_2(\ve, t,x)r^2+r^3,$$   
     where $g_1(\ve, t,x )$ and $ g_2(\ve, t,x )$ satisfy 
    \begin{align*}
        \norm{g_1(\ve, t,x )}_{S_c(\R)}+
        \norm{g_2(\ve, t,x )}_{S_c(\R)} \le C_1. 
    \end{align*}
    Then we have 
    \begin{align*}
        \ntx{r}{\oo}{2}\le C_2 \ve^a. 
    \end{align*}
    Here the constants $C_1$ and $C_2$ are independent of $\ve $ and $t$.
\end{lem}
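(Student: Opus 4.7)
The plan is to mimic the scaling reduction of Lemmas \ref{lem:estimate of r with small perturbation} and \ref{lem:estimate of r with high oscillation} and to handle the new, self-referential nature of the nonlinearity via a finite sub-interval partition followed by a continuity bootstrap. First I would set $R := S_\ve r$ and $\phi := (\p_t + i\jd{})\jd{-1}R$, so that $R = \im \phi$ and $\phi$ solves $(\p_t - i\jd{})\phi = -\jd{-1}\ve^2 S_\ve F$ with data $\phi_0$ as in \eqref{3.3}. Using the pointwise scaling identity $S_\ve(gh) = \ve^{-1}(S_\ve g)(S_\ve h)$, a direct computation gives
\[
\ve^2 S_\ve F \;=\; \ti g_1^{2}\, R \,+\, \ti g_2\, R^2 \,+\, R^3, \qquad \ti g_i := S_\ve g_i,
\]
and since $S_{s_c}$ is a scale-invariant critical Strichartz norm, one has the uniform-in-$\ve$ bound $\|\ti g_i\|_{S_{s_c}(\R)} = \|g_i\|_{S_{s_c}(\R)} \le C_1$.

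Next I would apply Lemma \ref{strichartz estimate} (together with Remark \ref{rem:strichartz estimates} and the trilinear product estimate in Remark \ref{rem:chain rules}) to the Duhamel formula for $\phi$, producing on any time interval $I \ni 0$ an estimate of the schematic form
\[
\|\phi\|_{Y(I)} \;\lesssim\; \ve^{a-s_c} \,+\, \|\ti g_1\|_{S_{s_c}(I)}^{2}\,\|R\|_{S_{s_c}(I)} \,+\, \|\ti g_2\|_{S_{s_c}(I)}\,\|R\|_{S_{s_c}(I)}^{2} \,+\, \|R\|_{S_{s_c}(I)}^{3},
\]
where $Y$ denotes the critical Strichartz work-space dominating both $\|R\|_{S_{s_c}(I)}$ and $\|R\|_{L^\infty_t L^2_x(I)}$, and the data term $\ve^{a-s_c}$ is obtained exactly as in the proof of Lemma \ref{lem:estimate of r with small perturbation}. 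Since $\|R\|_{S_{s_c}(I)} \le \|\phi\|_{Y(I)}$, this is a self-referential inequality for $\|\phi\|_{Y(I)}$.

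The main obstacle is the linear-in-$\phi$ term $\|\ti g_1\|_{S_{s_c}(I)}^{2}\,\|R\|_{S_{s_c}(I)}$, whose coefficient is not small a priori, so a direct Strichartz iteration on all of $\R$ does not close. I would overcome this by fixing a small $\eta = \eta(C_1) > 0$ and, using the $\ve$-uniform finiteness $\|\ti g_i\|_{S_{s_c}(\R)} \le C_1$ together with continuity of the Strichartz norm in the time interval, partitioning $\R = \bigcup_{k=0}^{N-1} I_k$ into $N = N(C_1,\eta)$ consecutive intervals $I_k = [t_k,t_{k+1}]$ on each of which $\|\ti g_i\|_{S_{s_c}(I_k)} \le \eta$. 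On $I_0$, choosing $\eta$ small enough to absorb the $\eta^2\|R\|_{S_{s_c}(I_0)}$ term into the left-hand side, a standard continuity argument yields $\|\phi\|_{Y(I_0)} \lesssim \ve^{a-s_c}$, provided $\ve$ is below a threshold depending on $C_1$. Iterating the same bootstrap on $I_1,\dots,I_{N-1}$ with initial times $t_1,\dots,t_{N-1}$ gives $\|\phi\|_{Y(\R)} \lesssim C^N \ve^{a-s_c}$, with $C^N$ depending on $C_1$ but not on $\ve$ or $t$. Finally, as in Lemma \ref{lem:estimate of r with small perturbation},
\[
\ntx{r}{\oo}{2} \;=\; \ve^{s_c}\, \ntx{R}{\oo}{2} \;\le\; \ve^{s_c}\, \|\phi\|_{Y(\R)} \;\lesssim\; \ve^{a},
\]
and the range of $\ve$ above the bootstrap threshold is handled trivially by enlarging $C_2$.
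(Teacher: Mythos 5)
Your proposal is correct and follows essentially the same route as the paper: scale by $S_\ve$, pass to $\phi=(\p_t+i\jd{})\jd{-1}R$, apply the Klein--Gordon Strichartz estimates, and close via a partition of the time axis into finitely many intervals (with the number of intervals depending only on $C_1$, hence uniform in $\ve$ and $t$) on which the coefficient of the linear term is small, followed by an iterated continuity bootstrap. The one point where the paper is more careful is that it runs the bootstrap twice --- first in $Y_{s_c}$ to obtain $\sc{R}\les \ve^{a}$, and only then in $Y_0$ --- because the trilinear bound $\zo{\varphi_1\varphi_2\varphi_3}\les \so{\varphi_1}\sc{\varphi_2}\sc{\varphi_3}$ forces two of the three factors to be measured in $S_{s_c}$; your single combined work-space can be made to do the same job, but you should check that the quadratic and cubic terms still close at the level $\ve^{a-s_c}$ (they do, since $a\ge 2>s_c$).
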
 

\begin{proof}
   Arguing similarly as the proof of Lemma \ref{lem:estimate of r with small perturbation}, we obtain       
        \begin{align*}
           \ysc{R}
           \les \ysc{\phi}
           &\les \nh{\jd{\frac12}\phi_0}{s_c} 
           +\zsc{F}
           \\
           &\les \nh{\jd{\frac12}\phi_0}{s_c} +\sc{ S_\ve g_1}^2 \sc{R}
           +\sc{R}^2
           +\sc{R}^3.
        \end{align*}
        Since $\norm{g_1}_{S_c(\R)}< \oo$, then there exists a constant $K$ and a sequence of time intervals 
        satisfying 
        \begin{align*}
            \bigcup^K_{k=1}J_k=\R^+,
        \end{align*}
        with
        \begin{align*}
            J_0=[0,t_1], \quad
            J_k=(t_k,t_{k+1}], 
            \quad
            k=1,\dots,K-1, \quad
            J_K=[t_K,\oo),
        \end{align*}
        such that for $I_k=\ve^2 J_k \cap [0,t]$,
        \begin{align*}
            C\norm{S_\ve g_1}^2_{S_{s_c}(I_k)}\leq\frac{1}{2}.
        \end{align*} 
        Thus we have for any $I_k, 0\le k \le K$,
        \begin{align*}
           \norm{R}_{Y_{s_c}(I_k)}
            \les \norm{\jd{\frac12}\phi(t_{k})}_{\dot H_x^{s_c}}
            +\norm{R}_{S_{s_c}(I_k)}^2
            +\norm{R}_{S_{s_c}(I_k)}^3.
        \end{align*}
        Iterating with respect to $k$ from $0$ to $K$ and noting 
        \begin{align*}
           \nh{\jd{\frac12}\phi_0}{s_c}
           \les \ve^a, 
        \end{align*}
         a standard bootstrap argument shows that for any $I_k, 0\le k\le K$,
        \begin{align*}
            \ysc{R}
            \les \ve^a.
        \end{align*}
        On the other hand, it follows that  
        \begin{align*}
           \yo{R}\les \yo{\phi}
           &\les 
           \ve^{a-s_c} + \zo{F} \\
           &\les \ve^{a-s_c} 
            +\sc{S_\ve g_1}^2 \so{R}
            +\sc{S_\ve g_2}\sc{R}\so{R}
            +\sc{R}^2\so{R} 
            \\
          &\les 
            \ve^{a-s_c} 
            +\sc{S_\ve g_1}^2 \so{R}
            +\ve^a\so{R}
            +\ve^{2a}\so{R}.
        \end{align*}
        Arguing analogously as the preceding iteration and bootstrap procedure, we see that
        \begin{align*}
           \yo{R}\les \ve^{a-s_c}, 
        \end{align*}
        which implies the desired result.
\end{proof}
We make the following remarks regarding Lemmas \ref{lem:estimate of r with small perturbation}--\ref{lem:estimate of r with bootstrap}.
\begin{remark}
The same result also holds when the nonlinearity $F(\ve,t, x)$ is replaced by the linear combination of the three types mentioned above, by combining the proofs of these lemmas. 
\end{remark}

\begin{remark}
We also note that the $Z_0$ bound of $G(\ve, t, x) $ in Lemma \ref{lem:estimate of r with small perturbation} can depend on time, which finally leads to the time-growth bound for the corresponding solution, see \cite{lei}.
It is worth noting that this time-growth bound will influence the bootstrap argument in Lemma \ref{lem:estimate of r with bootstrap} when time grows.
Consequently, we may only get the $\ve$ and time-dependent bound of the corresponding solution for a short time.
To gain the same bound for the global time,  
we have to seek the help of the conservation law, 
see Section \ref{Non-relativistic limit} for example.
\end{remark}

\subsection{Derivation for the high-order asymptotic expansion}
In this subsection, we aim to derive the high-order asymptotic expansion of $u^\varepsilon$ guided by Lemmas \ref{lem:estimate of r with small perturbation}--\ref{lem:estimate of r with bootstrap}.

$\bullet$ \textit{Renormalization of the error equation.} According to the first-order asymptotic expansion \eqref{first order expansion of u} and the result from \cite{lei} that the first-order remainder term $r$ is of order $O(\ve^2)$, we need to expand the first-order remainder term $r$ further to a sum consisting of an $O(\ve^2)$ leading term and a higher-order remainder term.
Recall the equation of $r$ in \eqref{eq:r},
\eqn{
    &\ve^2 \p_{tt} r -\De r +\frac{1}{\ve^2} r
     + F_1(v)+F_2(v) + F_3(v, r)=0, \\
    &r(0)=0, \quad  \p_t r(0) = -(\p_t v(0)+\p_t \ba v(0)),	 
}
where $v$ and $\Phi_1$ are defined in \eqref{eq:v} and \eqref{Phi_1}, and
\begin{align*}
    F_1(v): = \ve^2 
    \ei{ }\p_{tt}v+ c.c.,
    \quad
    F_2(v):= \ei{3}v^3+c.c.,
    \quad
    F_3(v,r):= 3{\Phi_1}^2 r+3{\Phi_1}r^2+r^3.
\end{align*}
To begin with, we undertake the decomposition of $r$ in the following manner:
\begin{equation*}
    r:= \ve^2 {\Phi_2} +r_1, 
\end{equation*} 
where  we expect that $\ve^2 \Phi_2$ with $\Phi_2\sim O(1)$ is the $O(\ve^2)$ leading term, and $r_1$ is the corresponding high-order remainder term.
Inserting this formula into \eqref{eq:r}, we have 
\begin{align}
   &\ve^2 \p_{tt} r_1 -\De r_1 +\frac{1}{\ve^2} r_1
   +\ve^4 \p_{tt}\Phi_2 -\ve^2 \De \Phi_2 +\Phi_2
   + 3\ve^2 \Phi_1^2\Phi_2
   +F_1(v)
   +F_2(v)
   \notag\\
  &\quad
    +3\ve^4 {\Phi_1}\Phi_2^2 +\ve^6 \Phi_2^3
    +
    3\left({\Phi_1}+\ve^2 {\Phi_2}\right)^2 {r_1}
    + 3\left({\Phi_1}+\ve^2 {\Phi_2}\right) r_1^2
    +r_1^3
  =0.
  \label{second order equation}
\end{align}  

$\bullet$ \textit{Isolating of the $O(\ve^2)$-terms in \eqref{second order equation}.} 
To derive the expression for $ \Phi_2 $, we need to identify which nonlinearities in \eqref{second order equation} are precisely of order $ O(\varepsilon^2) $. 
We then cancel these nonlinearities using terms generated by $ \Phi_2 $ to ensure that the remaining nonlinearities are of a higher order. Consequently, we expect the $ L^2 $ estimate of $ r_1 $ to be of higher order than $ O(\varepsilon^2) $.

We claim that presenting exactly order $ O(\ve^2)$ are terms $3\ve^2 \Phi_1^2\Phi_2, F_1(v), F_2(v)$ in \eqref{second order equation}.   
First, now that $\Phi_1, \Phi_2 \sim O(1)$ and by Lemma \ref{lem:estimate of r with small perturbation}, we neglect the dependence of time and identify that formally
\begin{align*}
    3\ve^2 \Phi_1^2\Phi_2 \mbox{ and } F_1(v)\sim O(\ve^2).    
\end{align*}
By virtue of Lemma \ref{lem:estimate of r with high oscillation},
we find that the effect of the factor $\ei{m}, m\neq 1, m\in \R$ in nonlinearity for equation \eqref{eq:r} is equivalent to a factor $\ve^2$. In this sense, we have implicitly 
\[ F_2(v)\sim O(\ve^2). \]
Here we note that $\ve^4 \p_{tt}\Phi_2 -\ve^2 \De \Phi_2 +\Phi_2$ is at least of order $O(\ve^2)$.
Secondly, for those nonlinearities in the second line of \eqref{second order equation},  we apply Lemmas \ref{lem:estimate of r with small perturbation} and \ref{lem:estimate of r with bootstrap},
and see that formally
 \begin{align*}
   3\ve^4 {\Phi_1}\Phi_2^2 \sim O(\ve^4) ,
\quad
\ve^6 \Phi_2^3  \sim O(\ve^6),
\end{align*}
and
\begin{align*}
   3\left({\Phi_1}+\ve^2 {\Phi_2}\right)^2 {r_1}
+ 3\left({\Phi_1}+\ve^2 {\Phi_2}\right) r_1^2
+r_1^3  
\end{align*}
involves a bootstrap structure which can be regarded as a higher order term than $O(\ve^2)$.

$\bullet$ \textit{Further renormalization by the modulated Fourier expansion.} Next, 
our task is to cancel the $O(\ve^2)$-term using $\Phi_2$.
To get a more precise expression presenting of order $O(\ve^2)$ from the nonlinearity of \eqref{second order equation},
we employ the idea of the modulated Fourier expansion and further expand $\Phi_2$ into different $\ve$-dependent frequencies,
\begin{align*}
    \Phi_2=\ei{}\eta_1+\ei{3}\eta_2+c.c.,
\end{align*}
where $\eta_1, \eta_2 \sim O(1) $ are independent of $\ve $.
In fact, the reason why we take such a formula for $\Phi_2$ is as follows:
\begin{enumerate}
    \item  Since $  F_1(v), F_2(v)$ involves factors $ \ei{\pm }, \ei{\pm 3} $ and the linear operator $\ve^2\p_{tt}-\De +\frac1 {\ve^2}$ do not change phases, 
    thus $\Phi_2$ should at least involve the same phases.

    \item Only $\ei{\pm m}$ with $m$ odd appears in $\Phi_2$.
    This is because the NLKG equation \eqref{eq:u} is cubic and we have taken the forms $\ei{\pm} v$  in the first asymptotic expansion \eqref{first order expansion of u}.  
    
    \item The expansion of $\Phi_2$ stops at $\ei{\pm m}, m=3 $  which is enough  for our analysis.
    Indeed, if there is $\ei{ m}\eta_3 +c.c. $ with some $ m\ge 5 $ and $\eta_3 \sim O(1)$ in the formula of $\Phi_2$,
    then the linear term $\ve^4 \p_{tt}\Phi_2 -\ve^2 \De \Phi_2 +\Phi_2$ will introduce 
    the term $\ei{\pm m} (-m^2+1) \eta_3$ which is difficult to be canceled by other terms.   
\end{enumerate} 
According to the formula of $\Phi_2$, we can rewrite equation \eqref{second order equation} as follows:
\begin{align*}
    \ve^2 \p_{tt} r_1 -\De r_1 +\frac{1}{\ve^2} r_1
    +F_\ve
    =0,
\end{align*}
where $F_\ve$ is given by 
\begin{align*}
	F_\ve 
	&=\ei{} \ve^2 \big[
			2i\p_t \eta_1-\De \eta_1+3(2\eta_1|v|^2
			+v^2\ba \eta_1+ \ba v^2 \eta_2)
			+\p_{tt}v
		\big]
        +\ei{3}\big(v^3 -8\eta_2\big)
	\\
	&\quad
	+\ve^2\big[3\ei{5}v^2\eta_2
    +\ei{3}
		 \big(
			6i\p_t \eta_2-\De \eta_2+3(v^2\eta_1+2|v|^2\eta_2)
		\big)
	\big]\\
	&\quad
    +\ve^4 \big(\ei{}\p_{tt} \eta_1+\ei{3}\p_{tt}\eta_2 \big) 
    +3\ve^4 {\Phi_1}\Phi_2^2 +\ve^6 \Phi_2^3
    \\
    &\quad
    +
    3\left({\Phi_1}+\ve^2 {\Phi_2}\right)^2 {r_1}
    + 3\left({\Phi_1}+\ve^2 {\Phi_2}\right) r_1^2
    +r_1^3  
	+c.c..
\end{align*}
With the help of Lemmas \ref{lem:estimate of r with small perturbation}--\ref{lem:estimate of r with bootstrap}, 
we need to let $F_\ve $ be of a higher order than $O(\ve^2)$ such that we can prove $r_1$ is of a higher order than $O(\ve^2)$.
Due to Lemmas \ref{lem:estimate of r with small perturbation}--\ref{lem:estimate of r with bootstrap},
we neglect the dependence of time and find that the second to fourth lines of $F_\ve$ are of order $O(\ve^4)$ or higher, which is acceptable.
The first line of $F_\ve$ is of order $O(\ve^2)$ and needs to be eliminated.
As a consequence, we let 
\begin{align*}
	&2i\p_t \eta_1-\De \eta_1+3(2\eta_1|v|^2
	+v^2\ba \eta_1+ \ba v^2 \eta_2)
	+\p_{tt}v
	=0,\\
	&v^3 -8\eta_2=0.
\end{align*}
Let $\eta_1 =w, \eta_2=\frac1 8 v^3$, we obtain \eqref{Phi_2}, \eqref{expansion of u}. And the equation of $w$ reads
\begin{align*}
       2i\p_t w -\De w +\p_{tt} v +3 \bigg(\frac 18 |v|^4v+v^2 \ba w+ 2|v|^2 w \bigg)=0.
\end{align*}

$\bullet$ \textit{The equation for high-order error.} Owing to the analysis above, we obtain the equation of $r_1$
\begin{align*}
	\ve^2 \p_{tt} r_1 -\De r_1 +\frac{1}{\ve^2}r_1+H_1+H_2+H_3+H_r=0,        
\end{align*}
where
\begin{align*}
    H_1&= \ve^4\left[\frac18  \ei{3}\p_{tt} \left(v^3\right)
    + \ei{}\p_{tt}w  \right]+c.c.,  \\
    H_2&=
    \ve^2\bigg[
    \frac38 \ei{5}v^5
    +\frac 18 \ei{3}\left(
    6i\p_t \left(v^3\right) -\De \left( v^3\right)
    +3\left(2 |v|^2v^3 + 8 v^2 w \right) \right)
    \bigg]
    +c.c.,
    \\
    H_3&=
    3\ve^4 {\Phi_1}\Phi_2^2 +\ve^6 \Phi_2^3 ,
    \\
    H_r&=
    3\left({\Phi_1}+\ve^2 {\Phi_2}\right)^2 {r_1}
    + 3\left({\Phi_1}+\ve^2 {\Phi_2}\right) r_1^2
    +r_1^3.
\end{align*}

Now, it is time to set the initial data of $w, r_1$ finely.
By \eqref{Phi_2}--\eqref{eq:r}, we have the following formula for initial data: 
\begin{equation}
    \label{relationships}
    \begin{aligned}
        r(0)
        %    	&= \ve^2 {\Phi_2}(0)+{r_1}(0)
        &=\ve^2\left( \frac{1}{8} \left(v_0^3+\ba {v_0}^3\right) 
        +w_0+\ba{w_0}\right) 
        +r_1(0)
        =0, \\
        \p_t r(0) 
        %    	&=\ve ^2 \p_t{\Phi_2}(0)+\p_t{r_1}(0)
        & = i\left[
        \frac{3}{8} \left(v_0^3-\ba {v_0}^3\right)
        +w_0-\ba{w_0}
        \right]
        + \ve^2\left[  \frac18 \p_t\big(
        v^3 + \ba v^3
        \big)(0)
        +\p_t (w+\ba w)(0)\right] 
        +\p_t r_1(0)  \\
        &=-\left(\p_t v(0) +\p_t \ba v(0) \right).	
    \end{aligned}
\end{equation}
To gain order $O(\ve^4)$ for $r_1$, 
in view of Lemma \ref{lem:estimate of r with small perturbation} and by imitating \eqref{3.5},
we need initial data $r_1(0), \p_t r_1(0)$ satisfy
\begin{align*}
   \ve^{2-s_c}  
   \nl{\p_t r_1(0)}{2}
   +\ve^{-s_c}\nl{\jd{\frac 12}r_1(0)}{2}
   \les \ve^{4-s_c}.
\end{align*} 
Thus we let 
\[     {r_1}(0)=0, \qquad
\p_t {r_1}(0) = -\ve^2 \left[
\frac18 \p_t\big(
v^3 + \ba v^3
\big)(0)
+\p_t (w+\ba w)(0)
\right]. \]
This and \eqref{relationships} give that 
\eqn{
    &\frac 18 \left( v_0^3 + \ba {v_0}^3\right)+w_0+\ba {w_0}= 0, \\
    &i\left[\frac 38\left(v_0^3-\ba {v_0}^3\right)
    +w_0-\ba {w_0}\right]=-\left(\p_t v(0) +\p_t \ba v(0) \right),}
which together with \eqref{eq:v} yields
\begin{align*}
    w_0&=\frac 14 \De (v_0-\ba {v_0}) 
    -\frac14 v_0^3+\frac 18 \ba {v_0}^3 
    -\frac{3}{4}|{v_0}|^2(v_0-\ba {v_0}).
\end{align*}
Hence we have equation \eqref{eq:f} for $w$ and the equation of  $r_1$ is given by
\eq{
        \label{eq:r_1}
        &\ve^2 \p_{tt} r_1 -\De r_1 +\frac{1}{\ve^2}r_1
        +H_1+H_2+H_3+H_r=0,
        \\
        &   {r_1}(0)=0, 
        \\
        &\p_t {r_1}(0)= -\ve^2 \left[
        \frac18 \p_t\big(
        v^3 + \ba v^3
        \big)(0)
        +\p_t (w+\ba w)(0)
        \right].
    }
         
\section{The key estimates}
\label{the key estimates}
\subsection{Estimation on $w$}
\label{estimate on f}
In this section, we establish the space-time estimate of $w$. 
Recall $w$ satisfies the following nonlinear Schr\"odinger equation:
\eqn{ 
    &2i\p_t w -\De w +\p_{tt} v +3 \left(\frac 18 |v|^4v+v^2 \ba w + 2|v|^2 w \right)=0,\\
    & w_0
    =\frac 14 \De (v_0-\ba {v_0})
    -\frac14 v_0^3+\frac 18 \ba {v_0}^3 
    -\frac{3}{4}|{v_0}|^2(v_0-\ba {v_0}),}
where $v$ satisfies \eqref{eq:v}.
The Duhamel formula of $w(t)$ is,
\[ w(t)=e^{-\frac i2 t\De }w_0+\frac i2\int_{0}^{t}e^{-\frac i2(t-s)\De}
\left[
    \p_{ss}v+3\left(\frac 18 |v|^4v+v^2  \ba w+ 2|v|^2 w \right)
\right](s)ds. \]

Before stating the lemma, we first make a simple observation about the space-time estimate of $ w$.
On the one hand, we require at least $H^2_x$ regularity of $v_0$ since initial data $w_0$ involves the term $\De v_0$.
On the other hand, the term $\partial_{tt}v$ in the nonlinearity implies $H^4_x$ regularity of $v_0$
which is the highest regularity requirement,
and the dependence of time after applying Strichartz estimates.
The forthcoming lemma will verify this observation in detail.

We give the space-time estimate of $w$ as follows.
\begin{lem}
    \label{lem:estimate of f}
    Let $d=2, 3$ and $ v_0 \in H^{d-2}_x(\R^d)$.
    Assume that $w$ and $v$ are solutions of \eqref{eq:f} and \eqref{eq:v}, respectively, 
    and $I=[0,T]$ is the corresponding maximal lifetime interval of $w$. 
    Then we have for $\ga \ge  -s_c$,
    \begin{align}
        \xsc{\naabs{\gamma}w}
        &\le C \big(
        \nh{{v_0}}{2+\ga+s_c}
        +T \nh{{v_0}}{4+\ga+s_c}
        \big),
    \label{est: ga w}
    \end{align}
    and
    \begin{align}
       \xsc{\naabs{\ga}\p_t w}
       &\le C\big(
       \nh{{v_0}}{4+\ga+s_c}
       +T \nh{{v_0}}{6+\ga+s_c}
       \big),
           \label{est:p_tf}
    \end{align}
    where the constant $C$ depends on $\norm{{v_0}}_{H^{d-2}_x}$.
\end{lem}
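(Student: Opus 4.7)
The natural approach is to bound $w$ in the $X_{s_c}$-norm via the NLS Strichartz estimate applied to the Duhamel formula
\[
w(t) = e^{-\frac{i}{2}t\De}w_0 + \frac{i}{2}\int_0^t e^{-\frac{i}{2}(t-s)\De}\Bigl[\p_{ss}v+3\bigl(\tfrac{1}{8}|v|^4v + v^2\ba w + 2|v|^2 w\bigr)\Bigr]\!(s)\,ds.
\]
Lemma \ref{strichartz estimate for NLS} reduces the task to bounding $\nh{w_0}{s_c+\ga}$ together with each of the four forcing terms in an appropriate Strichartz-dual norm. The explicit expression for $w_0$ only involves $\De(v_0-\ba{v_0})$ and cubic polynomials in $v_0$, so Lemma \ref{chain rules} and Sobolev embedding give $\nh{w_0}{s_c+\ga}\les \nh{v_0}{2+\ga+s_c}$, yielding the first summand of \eqref{est: ga w}.

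Three of the four forcing terms are handled by standard NLS tools. The quintic term $\tfrac{1}{8}|v|^4 v$ is controlled globally in time by $\nh{v_0}{\ga+s_c}$ (with constants depending on $\nh{v_0}{d-2}$) via the fractional Leibniz rule, Remark \ref{rem:chain rules}, and the scattering bound of Lemma \ref{scattering theory}. The bilinear couplings $v^2\ba w$ and $|v|^2 w$ are handled by bootstrap: Remark \ref{rem:chain rules} gives $\zsc{\naabs{\ga}(v^2\ba w+|v|^2 w)}\les \sc{v}^2\sc{\naabs{\ga}w}$ on any subinterval, and global scattering of $v$ lets us partition $\R^+$ into finitely many subintervals $J_k$ with $\norm{v}_{S_{s_c}(J_k)}\ll 1$, absorbing the $w$-factor on each piece and then iterating over the $J_k$.

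The main obstacle is the forcing $\p_{tt}v$, which admits no scattering-type decay and carries no small parameter. My plan is to bypass Strichartz for this term and use the trivial energy inequality
\[
\Bigl\|\int_0^t e^{-\frac{i}{2}(t-s)\De}\p_{ss}v(s)\,ds\Bigr\|_{L^{\I}_t \dot H^{s_c+\ga}_x([0,T])} \le T\,\|\p_{tt}v\|_{L^{\I}_t \dot H^{s_c+\ga}_x},
\]
which is exactly where the $T$-factor enters. Differentiating \eqref{eq:v} twice in time rewrites $\p_{tt}v$ as a polynomial expression in $\De^2 v$, $\De(|v|^2 v)$, $|v|^2\De v$, $v^2\De\ba v$ and $|v|^4 v$, so the most regularity-demanding term $\De^2 v$ forces the jump to $v_0\in H^{4+\ga+s_c}_x$. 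Combining Lemma \ref{scattering theory} at that regularity with Lemma \ref{chain rules} bounds all pieces and gives $\|\p_{tt}v\|_{L^{\I}_t\dot H^{s_c+\ga}_x}\les \nh{v_0}{4+\ga+s_c}$, producing exactly the $T\nh{v_0}{4+\ga+s_c}$ term of \eqref{est: ga w}. The $L^{2(d+2)/d}_{tx}$-portion of the $X_{s_c}$-norm is controlled in the same spirit, by an inhomogeneous Strichartz estimate against an $L^2$-admissible pair close to the energy endpoint, followed by a H\"older-in-time payoff of a $T$-factor.

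For \eqref{est:p_tf}, the cleanest path is to read $\p_t w$ off from equation \eqref{eq:f} as $|\na|^\ga \p_t w = \tfrac{1}{2i}|\na|^\ga(\De w-\p_{tt}v-3(\tfrac18|v|^4 v+v^2\ba w+2|v|^2 w))$. Applying the just-proved \eqref{est: ga w} with $\ga$ replaced by $\ga+2$ to the $\De w$-term raises both regularity indices on $v_0$ by two, and the remaining forcing terms are treated by the same methods above with correspondingly higher regularity, matching the $4+\ga+s_c$ and $6+\ga+s_c$ indices in \eqref{est:p_tf}.
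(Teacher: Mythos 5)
Your proposal is correct and follows essentially the same route as the paper: Duhamel plus NLS Strichartz, the explicit formula for $w_0$ giving $\nh{v_0}{2+\ga+s_c}$, scattering for the quintic term, an interval-splitting bootstrap to absorb the $v^2\ba w$ and $|v|^2w$ couplings, the $\p_{tt}v$ forcing placed in the $L^1_t\dot H^{s_c+\ga}_x$ dual norm (which is exactly the H\"older-in-time/energy step producing the $T\nh{v_0}{4+\ga+s_c}$ term via $\De^2 v$), and \eqref{est:p_tf} read off from the equation with $\ga$ shifted to $\ga+2$. The only cosmetic difference is that the paper pays the $T$-factor only on the $\De^2 v$ piece of $\p_{tt}v$, routing the cubic and quintic pieces through $Z_{s_c}$ without time growth, whereas you pay $T$ on all of $\p_{tt}v$; both yield the stated bound.
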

\begin{proof}
We prove \eqref{est: ga w} first.
 It follows from \eqref{eq:v} that
\begin{equation}
    \label{p_ttv}
     \p_{tt} v=O\left( \De^2 v +v^2\De v+ \De  \left(v^3\right)+v^5\right).
\end{equation} 
Owing to Duhamel formula, Lemma \ref{strichartz estimate for NLS}, and \eqref{p_ttv}, 
we have for $\ga \ge -s_c$,
\begin{align*}
   \xsc{\naabs{\gamma}w}
   &\les \nh{\naabs{\gamma}w_0 }{s_c}
        +\xsc{\naabs{\ga}\int_{0}^{t}e^{-\frac i2(t-s)\De}
            \left[
            \p_{ss}v+3\left(\frac 18 |v|^4v+v^2 \ba w+ 2|v|^2 w \right)
            \right](s)ds} \\
   &\les \nh{\naabs{\gamma}w_0 }{s_c}
        +\nth{\naabs{\ga+4} v}{1}{s_c}
        +\zsc{\naabs{\ga}(v^2\De v)}
        +\zsc{\naabs{\ga+2}(v^3)}\\
        &\quad 
        +\zsc{\naabs{\ga} (v^5)}
        +\zsc{\naabs{\ga}\left(v^2 w\right)}.
\end{align*}
By Lemma \ref{chain rules}, Remark \ref{rem:chain rules}, and the Gagliardo-Nirenberg inequality,
we obtain
\begin{align*}
    \xsc{\naabs{\gamma}w}
    &\les \nh{\naabs{\gamma}w_0 }{s_c}
    +T\nh{{v_0}}{4+\ga+s_c}
    +\sc{\naabs{\ga+2 }v}\sc v^2
    +{\chi_0}\sc{\naabs{\ga}v}\sc v \sc{\De v}\\
    &\quad
    +\sc{\naabs{\ga} v} \sc{v}^2\ltx{v}{\oo}^2
    +{\chi_0}\sc{\naabs{\ga}v}\sc v\sc{w}
    +\sc v^2\sc{\naabs{\ga}w},
\end{align*}
where ${\chi_0}$ is $0$ for $\ga \in [-s_c, 0]$ or $1$ for otherwise.   
Hence, by Lemma \ref{scattering theory} and the Gagliardo-Nirenberg inequality, we get for $\ga \ge -s_c$,
\begin{align*}
   \xsc{\naabs{\gamma}w} 
   &\les 
       \nh{\naabs{\ga }w_0}{s_c}
   + \nh{{v_0}}{2+\ga+sc}
   +T\nh{{v_0}}{4+\ga+s_c}
   +{\chi_0}\sc{\naabs{\ga}v}\sc{w}\\
   &\quad
   +\sc v^2\sc{\naabs{\ga}w}, 
\end{align*}
where the implicit constant depends on $\norm{{v_0}}_{H^{d-2}_x}$.

For $\ga \in [-s_c, 0]$, 
since $\sc{v}<\oo$,
a standard bootstrap process as shown in Lemma \ref{lem:estimate of r with bootstrap} gives that
\begin{align*}
   \xsc{\naabs{\gamma}w} 
   &\les 
       \nh{\naabs{\ga }w_0}{s_c}
   + \nh{{v_0}}{2+\ga+sc}
   +T\nh{{v_0}}{4+\ga+s_c}. 
\end{align*}
It follows from the definition of $w_0$ and Remark \ref{rem:chain rules} that 
    \begin{align*}
        \nh{\naabs{\ga }w_0}{s_c}
        \les \nh{v_0}{2+\ga+s_c}+\nh{ \naabs{\ga}(v_0^3)}{s_c}
        \les \nh{v_0}{2+\ga+s_c},
    \end{align*}
which implies
\begin{align*}
   \xsc{\naabs{\gamma}w} 
   &\les 
   \nh{{v_0}}{2+\ga+sc}
   +T\nh{{v_0}}{4+\ga+s_c}.
\end{align*}

For $\ga >0$, using the above estimate with $\ga=0$, we infer that 
\begin{align*}
    \xsc{\naabs{\gamma}w}
    &\les \nh{\naabs{\gamma}w_0 }{s_c}
    +\nh{{v_0}}{2+\ga+sc}
    +T\nh{{v_0}}{4+\ga+s_c}\\
    &\quad+\sc{\naabs{\ga}v}
    \left(\nh{{v_0}}{2+sc}+T\nh{{v_0}}{4+s_c}\right)
    +\sc v^2\sc{\naabs{\ga}w}.
\end{align*}
Using a similar bootstrap argument as shown in Lemma \ref{lem:estimate of r with bootstrap}, we obtain for $ \ga \ge -s_c$,
\begin{align*}
   \xsc{\naabs{\gamma}w}
   &\les 
        \nh{{v_0}}{2+\ga+s_c}
        +T\nh{{v_0}}{4+\ga+s_c},
\end{align*}  
which proves \eqref{est: ga w}.

Now it remains to show the estimate \eqref{est:p_tf}. 
Note by \eqref{eq:f}, we have
\begin{equation*}
    % \label{p_t f}
    \p_t w=O \left( \De w +\p_{tt} v +v^5+v^2 w\right),
\end{equation*}
which gives
\begin{align*}
   \xsc{\naabs{\ga}\p_t w}
   &\les \xsc{\naabs{\ga+2}w}
   +\xsc{\naabs{\ga}\p_{tt}v}
   +\xsc{\naabs{\ga}v^5}
   +\xsc{\naabs{\ga}\left(v^2 w \right)} . 
\end{align*}
Due to Remark \ref{rem:chain rules} and \eqref{p_ttv}, we deduce that 
\begin{align}
      \xsc{\naabs{\ga}\p_{tt}v} 
      &\les 
       \xsc{\naabs{\ga+4}v}
       +\xsc{\naabs{\ga+2}v}\ltx{v}{\oo}^2
       +\xsc{\naabs{\ga}\left(v^2\De v\right)}
       +\xsc{\naabs{\ga}(v^5)} 
       \notag\\
      &\les
         \nh{{v_0}}{4+\ga+s_c}, 
         \label{est:ga p_ttv}
\end{align}
where the constant depends on $\norm{{v_0}}_{H^{d-2}_x}$.
Combining those estimates \eqref{est: ga w}, \eqref{est:ga p_ttv}, and invoking Remark \ref{rem:chain rules},  we establish for $ \ga \ge -s_c$,
\begin{align*}
    \xsc{\naabs{\ga}\p_t w}
    &\les  \xsc{\naabs{\ga+2}w}
    +\nh{v_0}{4+\ga+s_c} 
    +\ltx{v}{\oo}^2\xsc{\naabs{\ga}w}
    +\chi_0\ltx{\naabs{\ga} v}{\oo} \ltx{v}{\oo}\xsc{w} \notag\\
    &\les \nh{{v_0}}{4+\ga+s_c}
    +T \nh{{v_0}}{6+\ga+s_c},
\end{align*}
where the implicit constant depends on $\norm{v_0}_{H^{d-2}_x}$.
This finishes the proof of the lemma.
\end{proof}

\subsection{Useful estimate}
\label{estimate on error}
In this subsection, we shall develop a useful estimate for $r_1$.
To apply the Strichartz estimate for NLKG,  
we transform the equation of $r_1$ to a $\ve$-independent Klein-Gordon type equation.
Then, under the framework of Strichartz estimates as
stated in Remark \ref{rem:strichartz estimates},
we can establish the related useful estimate
for $r_1$ which will be applied to get 
the estimate of $r_1$ in next section.
The main procedure of the proof is similar to Lemmas \ref{lem:estimate of r with small perturbation}--\ref{lem:estimate of r with bootstrap}.

\subsubsection{Formulation}

We introduce the scaling transform $S_\ve $ 
as stated in Lemma \ref{lem:estimate of r with small perturbation} defined by
\[ S_\ve g(t,x):= \ve g(\ve^2 t, \ve x). \]
To eliminate the dependence of $\ve$ for equation \eqref{eq:r_1}, we set 
\begin{equation}
    \label{transform: R_1, h, ti f}
     ({R_1}, h, \ti w )(t, x)= S_\ve ({r_1}, v, \ve^2 w), 
\end{equation}
and for convenience, we denote
\begin{align*}
   \ti {\Phi_1}&= e^{it} h+ c.c.,\\
   \ti {\Phi_2}&=\frac 18 \left(
                e^{3it}h^3\right)
                +e^{it}\ti w +c.c..
\end{align*}
Here we note that $h$ and $ \ti w $ still satisfy \eqref{eq:v} and \eqref{eq:f} with scaling transformed initial data, respectively. 
Then, the equation \eqref{eq:r_1} is transformed to the following $\ve$-independent equation:
\begin{align}
    \label{eq:{R_1}}
    \p_{tt} {R_1} -\De {R_1}+{R_1}+{G_1}+{G_2}+{G_3}+{G_R}=0,
\end{align}
with the initial data 
\begin{align*}
    {R_1}(0)=0, \quad 
    \p_t {R_1}(0) = \ve^2 S_\ve (\p_t r_1(0)),
\end{align*}
where 
\begin{align}
    \label{G_1}
    {G_1}&=\frac18 e^{3it}\p_{tt} \left( h^3\right)
    + e^{it}\p_{tt}\ti w +c.c., \\
    \label{G_2}
    {G_2}&=\frac38 e^{5it}h^5
    +\frac 18 e^{3it}\left[
    6i\p_t \left(h^3\right) -\De \left( h^3\right)
    +3\left(2 |h|^2h^3 + 8h^2 { \ti w}\right) \right]
    +c.c.,\\
    \label{G_3}
    {G_3}&=3\ti {\Phi_1}\ti {\Phi_2}^2+\ti {\Phi_2}^3,\\
    \label{G_R}
    {G_R}&=3(\ti {\Phi_1}+\ti {\Phi_2})^2{R_1}+3(\ti {\Phi_1}+\ti {\Phi_2})R_1^2+R_1^3.
\end{align}

Now we make the wave decomposition as shown in the proof of Lemma \ref{lem:estimate of r with small perturbation}. 
Denote
\begin{align*}
   \phi = (\p_t +i \jd{})\jd{-1}{R_1}, 
\end{align*}
then we have
\eq{ \label{R1, phi}
    &{R_1}=\im \phi,\\
    &\p_t {R_1} =\re \jd{}\phi.}
We rewrite equation \eqref{eq:{R_1}} as follows: 
\begin{align}
    (\p_t -i \jd{})\phi =-\jd{-1}({G_1}+{G_2}+{G_3}+{G_R}),
    \label{eq:phi}
\end{align}
with initial data 
\begin{equation}
    \label{phi0}
    \phi(0):={\phi_0}=\jd{-1}\p_t {R_1}(0)=\ve^2\jd{-1}S_\ve (\p_t {r_1}(0)).
\end{equation}
The Duhamel formula reads
\begin{align*}
   \phi(t)=e^{it\jd{}}{\phi_0} 
               -\int_{0}^{t}e^{i(t-s)\jd{}}\jd{-1}({G_1}+{G_2}+{G_3}+{G_R})(s) ds.
\end{align*}

\subsubsection{Estimate of $\phi$}
\label{estimate on nonlinearity}
We give the estimate of $\phi$ as follows.
\begin{lem}
    \label{lem:estimate of phi}
    Let $d=2,3 $ and $\phi(t,x ): [0, T]\times \R^d \to \C$ satisfy equation \eqref{eq:phi} with the initial data \eqref{phi0}.
    Then for suitably small constant
    $\de_0$ satisfying
    \begin{align*}
           \ve^4T\nh{v_0}{4+s_c} \le C\left(\norm{v_0}_{H^{d-2}_x}
           \right) \de_0,  
    \end{align*}
     the following estimate is valid for $t\in [0, T]$ and $   \ga \in [-s_c, 0]$,
    \begin{align}
       &\ysc{\naabs{\ga}\int_{0}^{t}e^{i(t-s)\jd{}}\jd{-1}({G_1}+{G_2}+{G_3}+{G_R})(s) ds}
       \notag\\
        &\quad\les
        \ve^{4+\ga}     
        \Big(
        \nh{{v_0}}{4+\ga+s_c}
        +\ve^2T\nh{{v_0}}{6+\ga+s_c}
        +\ve^4T^2\nh{{v_0}}{8+\ga+s_c}
        \Big)
         \notag\\
        &\qquad
        + \left(
        \sc{h}  
        +\ve^2 \nh{v_0}{2+s_c}
        +\de_0             
        \right)^2
        \sc{\naabs{\ga}R_1},
        \label{est:nonlinearity}  
    \end{align}
    where the implicit constant depends on $\norm{v_0}_{H^{d-2}_x}$.
\end{lem}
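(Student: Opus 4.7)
The plan is to apply the Strichartz estimate for NLKG (Lemma \ref{strichartz estimate}, in the form highlighted in Remark \ref{rem:strichartz estimates}) to the Duhamel formula for $\phi$, which reduces the $Y_{s_c}$ estimate of the retarded integral to a sum of $Z_{s_c}$ estimates of the four pieces of the nonlinearity:
\[
\ysc{|\nabla|^\gamma \int_0^t e^{i(t-s)\langle\nabla\rangle}\langle\nabla\rangle^{-1}(G_1+G_2+G_3+G_R)(s)\,ds}
\lesssim \sum_{j=1,2,3,R}\zsc{|\nabla|^\gamma G_j}.
\]
I then bound each $\zsc{|\nabla|^\gamma G_j}$ separately using the fractional Leibniz rule and Remarks \ref{rem:strichartz estimates}--\ref{rem:chain rules}.

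The core device for $G_1$, $G_2$ and $G_3$ is the scaling identity $S_\ve$ together with $h = S_\ve v$ and $\tilde w = S_\ve(\ve^2 w)$. Since $S_\ve$ preserves $S_{s_c}$, while each time derivative contributes $\ve^2$ and each spatial derivative contributes $\ve$, every factor that appears in $G_1, G_2, G_3$ equals a power of $\ve$ times a quantity evaluated on the interval $[0,\ve^2 T]$ in the original variables. Using the scattering bound of Lemma \ref{scattering theory} for $v$ (and \eqref{p_ttv} for $\partial_{tt}v$), Lemma \ref{lem:estimate of f} for $w$ and $\partial_t w$, and the analogous estimate for $\partial_{tt} w$ obtained by differentiating \eqref{eq:f} once more in time, the contributions of $G_1$ and $G_2$ combine into
\[
\ve^{4+\gamma}\Big(\nh{v_0}{4+\gamma+s_c}+\ve^2 T\,\nh{v_0}{6+\gamma+s_c}+\ve^4 T^2\,\nh{v_0}{8+\gamma+s_c}\Big).
\]
The oscillatory factors $e^{3it}, e^{5it}$ in $G_2$ are harmless in Lebesgue norms, and for $G_3$ the identity $\tilde\Phi_2 = S_\ve(\ve^2\Phi_2)$ produces a clean $\ve^2$ in $\sc{\tilde\Phi_2}$, so that the cubic and quartic expressions in $\tilde\Phi_1, \tilde\Phi_2$ are automatically of order $\ve^4$ or higher.

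For $G_R$ I extract the factor $\sc{|\nabla|^\gamma R_1}$ from each summand through Remark \ref{rem:chain rules}:
\[
\zsc{|\nabla|^\gamma G_R}\lesssim \big(\sc{\tilde\Phi_1}+\sc{\tilde\Phi_2}+\sc{R_1}\big)^2 \sc{|\nabla|^\gamma R_1}.
\]
The pieces of the coefficient are controlled by $\sc{\tilde\Phi_1}\lesssim \sc{h}$ and, via the scaling plus Lemma \ref{lem:estimate of f},
\[
\sc{\tilde\Phi_2}\lesssim \ve^2\nh{v_0}{2+s_c}+\ve^4 T\,\nh{v_0}{4+s_c}.
\]
The smallness hypothesis $\ve^4 T\,\nh{v_0}{4+s_c}\lesssim \delta_0$ then collapses the bracket into the bootstrap coefficient $\bigl(\sc{h}+\ve^2\nh{v_0}{2+s_c}+\delta_0\bigr)^2$ claimed in \eqref{est:nonlinearity}, with $\sc{R_1}$ absorbed into $\delta_0$ in the subsequent bootstrap argument.

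I expect the main obstacle to be twofold. First, the linear term $e^{it}\partial_{tt}\tilde w$ in $G_1$ is not a product and therefore does not directly fit Remark \ref{rem:chain rules}; it must be handled by a direct Strichartz estimate on $[0,T]$, using that $\partial_{tt}\tilde w$ has a good $L^\infty_t L^2_x$-type bound. Second, obtaining the sharp $\ve^4 T^2\,\nh{v_0}{8+\gamma+s_c}$ contribution requires extending Lemma \ref{lem:estimate of f} to $\partial_{tt}w$ by differentiating \eqref{eq:f} in time, and then carefully bookkeeping the $\ve^6$ scaling factor from $\partial_{tt}\tilde w = \ve^6 S_\ve(\partial_{tt} w)$ together with the $\ve^2 T$ growth produced by applying Lemma \ref{lem:estimate of f} on the interval $[0,\ve^2 T]$; the composition of these two growth factors is exactly what produces the quadratic-in-$T$ term in the stated bound.
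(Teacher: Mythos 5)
Your overall architecture---Duhamel plus the Klein--Gordon Strichartz estimate, the scaling $S_\ve$, the fractional Leibniz rule for the product terms, an $L^1_t\dot H^{s_c}_x$ dual norm for the linear pieces $\De^2\ti w$, $\De^3 h$ hidden in $\p_{tt}\ti w$ (which is indeed where the factors of $T$, and hence the $T^2$ term, enter), and the extraction of $\sc{\naabs{\ga}R_1}$ from $G_R$---matches the paper's treatment of $G_1$, $G_3$ and $G_R$ in all essentials.

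The genuine gap is in $G_2$. Your assertion that the factors $e^{3it}$, $e^{5it}$ are ``harmless in Lebesgue norms'' is precisely where the argument breaks: if you discard the oscillation and estimate $\zsc{\naabs{\ga}G_2}$ directly, every term in $G_2$ is only $O(\ve^{2+\ga})$, two powers of $\ve$ short of the claimed $\ve^{4+\ga}$. For instance $\zsc{\naabs{\ga}\De(h^3)}\les\sc{\naabs{\ga+2}h}\sc{h}^2\les\ve^{2+\ga}\nh{v_0}{2+\ga+s_c}$, because $\sc{h}=\sc{v}=O(1)$ is scale-invariant and only the differentiated factor gains powers of $\ve$; the same count holds for $h^5$, $\p_t(h^3)$ and $h^2\ti w$. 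Unlike $G_1$, whose amplitudes $\p_{tt}(h^3)$ and $\p_{tt}\ti w$ already carry four powers of $\ve$, the amplitude of $G_2$ carries only two, and the missing two must be extracted from the non-resonance of the phases $m-\jb{\xi}$, $m=3,5$: at low frequency one integrates by parts in $s$ in $\int_0^t e^{i(t-s)\jd{}}e^{ims}(\cdots)\,ds$, converting the oscillation into an extra time derivative of the amplitude (worth $\ve^2$ after scaling), while on $P_{>1}$ one uses $\naabs{\ga}P_{>1}\les\naabs{\ga+2}P_{>1}$ to gain two spatial derivatives (again worth $\ve^2$). This is the mechanism of Lemma \ref{lem:estimate of r with high oscillation}, and it is the essential step your proposal omits. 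A secondary issue: the coefficient $\left(\sc{h}+\ve^2\nh{v_0}{2+s_c}+\de_0\right)^2$ in \eqref{est:nonlinearity} requires the a priori bound $\sc{R_1}\les\ve^2+\ve^4T\nh{v_0}{4+s_c}\les \ve^2+\de_0$, which the paper imports from the first-order estimate of \cite{lei} together with $R_1=R-\ti{\Phi_2}$; postponing this to ``the subsequent bootstrap'' leaves the lemma as stated unproved and risks circularity.
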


\begin{proof}
    
We divide the proof into four parts: the estimates of $G_1, G_2, G_3$, and $ G_R$.

$\bullet$ Estimation on ${G_1}$ part.

By the definition \eqref{G_1} of $G_1$ and Lemma \ref{strichartz estimate},
we have 
\begin{align*}
    \ysc{\naabs{\ga}\int_{0}^{t}e^{i(t-s)\jd{}}\jd{-1}
    G_1ds}
    &\les \ysc{\naabs{\ga}\int_{0}^{t}e^{i(t-s)\jd{}}\jd{-1}\p_{ss}
        \left( h^3\right) (s)ds}
        \\
    &\quad 
        +\ysc{\naabs{\ga}\int_{0}^{t}e^{i(t-s)\jd{}}\jd{-1}\p_{ss} \ti  w (s)ds},
\end{align*}
then it suffices to deal with the $Y_{s_c}$ estimates for $\p_{tt} \left(  h^3\right), \p_{tt}{\ti w}$.

First, since $h$ satisfies \eqref{eq:v} with initial data $h(0):=h_0=S_\ve {v_0}$,
then we have the following equalities:
\begin{align}
    \label{p_t h}
    \p_{t} h&=O \left(\De h + h^3\right),
    \qquad
    \p_{tt} h=O\left( \De^2 h +h^2\De h+  \De \left( h^3\right) +h^5\right), \\ 
    \label{p_ttth}
    \p_{ttt}h
    &=O
    \left(\De^3 h+\De^2(h^3)+\left(\De h\right)^2h+h^2\De^2h+\De\left(h^2\De h\right)
    +h^4\De h+h^2\De (h^3) +\De (h^5)+h^7
    \right).
\end{align}
By Lemmas \ref{chain rules}, \ref{strichartz estimate}, and the Gagliardo-Nirenberg inequality,  
we obtain
\begin{align*}
    &\ysc{\naabs{\ga}\int_{0}^{t}e^{i(t-s)\jd{}}\jd{-1}\p_{ss}
        \left( h^3\right) (s)ds} \notag\\ 
    &\quad \les \zsc{\naabs{\ga} \left(h^2\p_{tt}h\right)}
                +\zsc{\naabs{\ga} \left(h(\p_t h)^2\right)}
        \\
    &\quad \les \sc{\naabs{\ga} \p_{tt}h} \sc h^2 
                +\sc{\naabs{\ga}\p_t h}\sc{\p_t h} \sc h.
\end{align*}
In light of \eqref{p_t h}, 
Lemma \ref{scattering theory}, 
and Remark \ref{rem:chain rules}, we get
\begin{align*}
   \sc{\naabs{\ga}\p_t h}
   &\les \sc{\naabs{\ga+2}h}
            +\sc{\naabs{\ga} \left(h^3\right)} 
   \\
   &  \les \nh{h_0}{2+\ga+s_c}      
   \les \ve^{2+\ga}\nh{v_0}{2+\ga+s_c},
\end{align*}
and 
\begin{align*}
   \sc{\naabs{\ga}\p_{tt} h}
   &\les \sc{\naabs{\ga+4}h}
        +\sc{\naabs{\ga+2}h^3}
        +\sc{\naabs{\ga}\left(h^2\De h\right)}
        +\sc{\naabs{\ga}h^5}
    \\
   & \les \nh{h_0}{4+\ga+s_c}  
   \les \ve^{4+\ga}\nh{v_0}{4+\ga+s_c}.
\end{align*}
Hence, it turns out from Lemma \ref{scattering theory} that 
\begin{align}
       \ysc{\naabs{\ga}\int_{0}^{t}e^{i(t-s)\jd{}}\jd{-1}\p_{ss}
        \left( h^3\right) (s)ds}  
       \les \ve^{4+\ga}\nh{{v_0}}{4+\ga+s_c}.
   \label{est: {G_1}1} 
\end{align}
We thus finish the estimate about the first component of $G_1$.

We turn to deal with the term $\p_{tt} {\ti w} $.
Now that $\ti w $ satisfies \eqref{eq:f} with initial data 
$\ti w(0):=\ti w_0 = \ve^2 S_\ve w_0$,  we have the following relation:
\begin{align}
    % \label{p_t ti w}
    % \p_{t} \ti w &= O\left( \De \ti w +\p_{tt} h+ h^5+h^2\ti w\right), \\
    \p_{tt} \ti w &=O\left(
                        \De^2 \ti w 
                        +\De^3 h 
                        +\De 
                            \left(
                            \De (h^3) +h^2\De h+h^5+h^2 \ti w
                        \right)
                        +\p_{ttt}h 
                        +h^4\p_t h
                        +\p_t\left( h^2\ti  w\right)
                        \right).
    \label{p_tt ti f}
\end{align}
Then it follows from \eqref{p_t h}, \eqref{p_ttth}, \eqref{p_tt ti f}, and
Lemma \ref{strichartz estimate} that
\begin{align}
\label{estimate of p_ss ti w}
   & \ysc{\naabs{\ga}\int_{0}^{t}e^{i(t-s)\jd{}}\jd{-1}\p_{ss}
         \ti w  (s)ds}\notag\\
         &\quad \les 
         \nth{\naabs{\ga+4} \ti w }{1}{s_c}
           +\nth{\naabs{\ga+6} h}{1}{s_c}
           +\zsc{\naabs{\ga+2} \left(
               h^2 \ti w
               \right)} 
               +\zsc{\naabs{\ga}\p_t\left(h^2 \ti w\right)} 
    \\ 
    \label{estimate of p_ss ti w-1}
          &\qquad      
          +\zsc{\naabs{\ga}
            \left(
                \De^2(h^3)+\left(\De h\right)^2h+h^2\De^2h+\De\left(h^2\De h\right)
            \right)}
    \\
    \label{estimate of p_ss ti w-2}
     &\qquad
        +\zsc{\naabs{\ga}\left(h^4\De h+h^2\De (h^3) +\De (h^5)+h^7\right)}
\end{align}
According to Lemmas \ref{chain rules}, \ref{strichartz estimate}, 
and Remark \ref{rem:chain rules}, we obtain that 
\begin{align}
   \eqref{estimate of p_ss ti w-1}+\eqref{estimate of p_ss ti w-2}
        &
     \les \sc{\naabs{\ga+4} h}\sc h^2
            +\sc{\naabs{\ga+2}h}\sc{\De h}\sc{h}
            \notag\\
   &\quad
    + \sc{\naabs{\ga+2}h}\sc{h}^2\ltx{h}{\oo}^2
    + \sc{\naabs{\ga}h}\sc{h}^2\ltx{h}{\oo}^4    
            \notag\\
   &\les \nh{h_0}{4+\ga+s_c}  \les \ve^{4+\ga}\nh{v_0}{4+\ga+s_c},
   \label{4.20+4.21}
\end{align}
where the constant depends on $\norm{v_0}_{H^{d-2}_x}.$
On account of Lemmas \ref{chain rules} and \ref{strichartz estimate},
we have,
\begin{align}
 \eqref{estimate of p_ss ti w}
    & \les 
         T
         \left(
            \nth{\naabs{\ga+4} \ti w }{\oo}{s_c}
                   +\nth{\naabs{\ga+6} h}{\oo}{s_c}
         \right)
        +\sc{h}^2 
        \left( \sc{\naabs{\ga+2}\ti w} +\sc{\naabs{\ga}\p_t \ti w}\right) 
        \notag\\
         &\quad  
        +\left( \sc{\naabs{\ga+2} h} 
        +\sc{\naabs{\ga}\p_t h }\right) \sc h \sc{\ti w}
       \label{p_tt ti f1}
\end{align}
To estimate the above inequality further, we need the estimate of $\ti w $.
By \eqref{transform: R_1, h, ti f} and Lemma \ref{lem:estimate of f}, we have for $\ga \ge -s_c,$ 
\begin{align*}
    \norm{\naabs{\gamma} \ti w}_{X_{s_c}([0, T])}
    % =\ve^{2+\ga} \norm{\naabs{\gamma} w}_{X_{s_c}([0, \ve^2T])}
    &\les \ve^{2+\ga}
    \left(
        \nh{{v_0}}{2+\ga+s_c}
        + \ve^{2} T \nh{{v_0}}{4+\ga+s_c}
    \right),
    \\
    \norm{\naabs{\gamma} \p_t \ti w}_{X_{s_c}([0, T])}
    % =\ve^{4+\ga} \norm{\naabs{\gamma} \p_tw}_{X_{s_c}([0, \ve^2T])}
    &\les \ve^{4+\ga}
    \left(
        \nh{{v_0}}{4+\ga+s_c}
        + \ve^{2} T \nh{{v_0}}{6+\ga+s_c}
    \right).
\end{align*}
Inserting these estimates for $\ti w, \p_t \ti w$ into \eqref{p_tt ti f1}
and invoking Lemma \ref{scattering theory}, we obtain
\begin{align}
   \eqref{estimate of p_ss ti w}
   &\les 
   \ve^{4+\ga}
   \left(
   \nh{{v_0}}{4+\ga+s_c}
   +\ve^2T  
   \nh{{v_0}}{6+\ga +s_c}
   +\ve^4T^2   
   \nh{{v_0}}{8+\ga+s_c}
   \right).
 \label{est:4.19}
\end{align}
Gathering estimates \eqref{4.20+4.21} and \eqref{est:4.19}, we have
\begin{align}
\label{est:G_12}
    &\ysc{\naabs{\ga}\int_{0}^{t}e^{i(t-s)\jd{}}\jd{-1}\p_{ss}
         \ti w  (s)ds}
        \notag \\
    &\qquad
    \les 
    \ve^{4+\ga}
   \left(
   \nh{{v_0}}{4+\ga+s_c}
   +\ve^2T  
   \nh{{v_0}}{6+\ga +s_c}
   +\ve^4T^2   
   \nh{{v_0}}{8+\ga+s_c}
   \right).
\end{align} 

Combining estimates \eqref{est: {G_1}1} with
 \eqref{est:G_12}, we establish 
\begin{align}
    &\ysc{\naabs{\ga}\int_{0}^{t}e^{i(t-s)\jd{}}\jd{-1}G_1(s)ds}
       \notag\\
        &\qquad
        \les 
        \ve^{4+\ga}
        \left(
        \nh{{v_0}}{4+\ga+s_c}
        +\ve^2T  
        \nh{{v_0}}{6+\ga +s_c}
        +
        \ve^4T^2   
        \nh{{v_0}}{8+\ga+s_c}
        \right),
        \label{est: {G_1}}
\end{align}
where the implicit constant depends on $\norm{v_0}_{H^{d-2}_x}$.
This ends the estimate of ${G_1}$.

$\bullet$ Estimation on ${G_2}$ part.

Since the definition \eqref{G_2} of $G_2$ only involves high oscillations $\ei{\pm m}$ with $m>1$, we can argue similarly as in the proof of Lemma \ref{lem:estimate of r with high oscillation}. 

First, using high-low frequency decomposition, we have 
\begin{align}
    \int_{0}^{t} e^{i(t-s)\jd{}}\jd{-1} {G_2} (s)ds 
    &= \int_{0}^{t} e^{i(t-s)\jd{}}\jd{-1} P_{ \le 1}{G_2} (s)ds  
    \label{{G_2}1}\\
    &\quad 
    +\int_{0}^{t} e^{i(t-s)\jd{}}\jd{-1}P_{>1} {G_2} (s)ds.
    \label{{G_2}2} 
\end{align}
Then the estimate of $G_2$ reduces to the estimates of \eqref{{G_2}1} 
and \eqref{{G_2}2}.

To estimate \eqref{{G_2}1}, 
it suffices to consider the following two terms:
\begin{equation}
\label{G_211}    
P_{\le 1} \int_{0}^{t} e^{i(t-s)\jd{}}\jd{-1} \left(
e^{5is} h^5(s) \right)ds, 
\end{equation}
and 
\begin{equation}
\label{G_212} 
  P_{\le 1} \int_{0}^{t} e^{i(t-s)\jd{}}\jd{-1} 
 e^{3is}
\left[
6i\p_s\left(  h^3 \right) -\De \left(  h^3\right)
+ 3\left(2|h|^2h^3 + 8 h^2\ti w \right) 
\right](s)ds. 
\end{equation}
For the first term \eqref{G_211}, integration by parts shows that
\begin{align*}
    \eqref{G_211}
    &=\frac{\jd{-1}} {i(5-\jd{})} P_{\le 1}e^{it \jd{}}\int_{0}^{t} h^5(s) de^{i(5-\jd{})s}\notag\\
    & 
    =\frac{\jd{-1}}{i(5-\jd{})}P_{\le 1 }
           \left[
                e^{5it}h^5(t)-e^{it\jd{}}h_0^5
                -\int_{0}^{t}e^{i(t-s)\jd{}+5is}\p_{s} \left(h^5(s)\right)ds
           \right].
\end{align*}
Owing to Lemmas \ref{chain rules}, \ref{strichartz estimate}, \ref{scattering theory}, and Remark \ref{rem:chain rules},  
we establish
\begin{align}
    \ysc{\naabs{\ga} \eqref{G_211}}
    &\les \nh{\jd{-\frac12}\naabs{\ga}(h^5_0)}{s_c}
            +\ysc{\jd{-1}\naabs{\ga}(h^5)}
            +\zsc{\naabs{\ga}\left( h^4\p_t h\right) } 
            \notag \\
    &\les \xsc{\naabs{\ga} (h^5)} 
            +\zsc{\naabs{\ga}\left( h^4(\De h+h^3)\right) } \notag\\
    &\les \ve^{4+\ga}
            \nh{{v_0}}{4+\ga +s_c},
    \label{est:4.29}
\end{align}
where the implicit constant depends on $\norm{v_0}_{H^{d-2}_x}$.
This finishes the estimate of \eqref{G_211}.
It remains to show the estimate of \eqref{G_212}.
Similarly, we get
\begin{align*}
    \eqref{G_212}
    =\frac{\jd{-1}}{i(3-\jd{})}P_{\le 1}
           \bigg\{ \bigg[&
                e^{3it}
                \left(
                    6i\p_t\left(  h^3 \right) 
                    -\De \left(  h^3\right)
                    + 3\left(2 |h|^2h^3 +8h^2\ti w\right)
                \right)	\notag\\
              &
              - e^{it\jd{}} \left(
              6i\p_t\left(  h^3 \right)(0) 
              -\De \left(  h_0^3\right)
              + 3\left( 2|h_0|^2h_0^3 + 8h_0^2\ti w_0\right)
              \right)
              \bigg]	\notag\\             
    &
     -\int_{0}^{t}e^{i(t-s)\jd{}}
         e^{3is} \p_s
     \left[
        6i\p_s\left(  h^3 \right) -\De \left(  h^3\right)
        + 3\left( 2|h|^2h^3 +8 h^2\ti w \right) 
     \right](s)ds
     \bigg\}.
\end{align*}
Invoking Lemmas \ref{chain rules}, \ref{strichartz estimate},
 \ref{scattering theory}, \ref{lem:estimate of f},
 and \eqref{est: {G_1}1},
  we find that 
\begin{align}
   &\ysc{\naabs{\ga} \eqref{G_212}}\notag \\
   &\les \xsc{\naabs{\ga} \p_t \left(h^3\right)}
            +\xsc{\naabs{\ga}\De (h^3)}
            +\xsc{\naabs{\ga}\left(|h|^2h^3\right)}
            +\xsc{\naabs{\ga}\left(h^2\ti w \right)}
            \notag \\
            &\quad
            +\zsc{\naabs{\ga}\p_{tt}( h^3)}
            +\zsc{\naabs{\ga+2}\p_t \left(h^3 \right)}
            +\zsc{\naabs{\ga}\p_t \left(|h|^2h^3\right)}
            +\zsc{\naabs{\ga} \p_t\left(h^2\ti w \right)}
            \notag\\          
   &\les \ve^{4+\ga}
   \left(
   \nh{{v_0}}{4+\ga+s_c}
   +\ve^2T \nh{{v_0}}{6+\ga+s_c}
   \right),
   \label{est:4.30}
\end{align}
where the implicit constant depends on $\norm{v_0}_{H^{d-2}_x}.$
The estimate of $\eqref{G_212}$ is completed.
Collecting estimates \eqref{est:4.29} and \eqref{est:4.30}, we obtain 
\begin{equation}
    \label{est: {G_2}1}
    \ysc{\naabs{\ga}\eqref{{G_2}1}}
    \les \ve^{4+\ga}
    \left(
    \nh{{v_0}}{4+\ga+s_c}
    +\ve^2T \nh{{v_0}}{6+\ga+s_c}
    \right).  
\end{equation}  
This finishes the estimate of \eqref{{G_2}1}.

Now we are in a position to estimate \eqref{{G_2}2}.
The proof is straightforward.
Thanks to Lemmas \ref{chain rules} and \ref{strichartz estimate}, we have
\begin{align*}
    &\ysc{\naabs{\ga}\eqref{{G_2}2}}
    \\
    &\les \zsc{\naabs{\ga+2}(h^5)}
            +\zsc{\naabs{\ga+2}\p_t\left(h^3\right)}
            +\zsc{\naabs{\ga+2}\left(h^2\ti w\right)}
            +\zsc{\naabs{\ga+4} \left(h^3\right)}
            \\
    &\les \sc{\naabs{\ga+2}h}
                \left(
                    \sc h^2\ltx{h}{\oo}^2
                    +\sc{\p_t h}\sc h
                    +\sc h\sc {\ti w}
                \right)
                \\
                &\quad
            +
            \left(
            \sc{\naabs{\ga+2}\p_th }
            +\sc{\naabs{\ga+2}\ti w}
            +\sc{\naabs{\ga+4}h}
            \right)
            \sc h^2 
\end{align*}
By Lemmas \ref{scattering theory}, \ref{lem:estimate of f}, and the
Gagliardo-Nirenberg inequality, it transpires that
\begin{align*}
    \ysc{\naabs{\ga}\eqref{{G_2}2}}    
   &\les \ve^{4+\ga}
   \left(
   \nh{{v_0}}{4+\ga+s_c}
   +\ve^2 T \nh{{v_0}}{6+\ga+s_c}
   \right). 
\end{align*}
This together with \eqref{est: {G_2}1} gives 
\begin{equation}
    \label{est: {G_2}}
    \ysc{  \naabs{\ga}  \int_{0}^{t} e^{i(t-s)\jd{}}\jd{-1} {G_2} (s)ds }
    \les\ve^{4+\ga}
    \left(
    \nh{{v_0}}{4+\ga+s_c}
    +\ve^2T \nh{{v_0}}{6+\ga+s_c}
    \right),
\end{equation}
where the implicit constant depends on $\norm{v_0}_{H^{d-2}_x}$.
Thus we give the estimate of $G_2$.

$\bullet$ Estimation on ${G_3}$ part.

By the definition \eqref{G_3} of $G_3$, Lemma \ref{strichartz estimate}, and Remark \ref{rem:chain rules},
we have for $   \ga \in [-s_c, 0]$
\begin{align}
\label{est: G_3-1}
    &
    \ysc{\naabs{\ga}\int_{0}^{t} e^{i(t-s)\jd{}}\jd{-1} {G_3} (s)ds}\notag\\
    &\quad\les 
       \sc h \sc{\ti {\Phi_2}}\sc{\naabs{\ga}\ti {\Phi_2}}
       +\sc{\ti {\Phi_2}}^2 \sc{\naabs{\ga}\ti {\Phi_2}}.
\end{align}
In light of Lemmas \ref{scattering theory} and \ref{lem:estimate of f}, we have for $\ga \ge -s_c$,
\begin{align}
    \sc{\naabs{\ga} \ti {\Phi_2}}
    &\les \sc{\naabs{\ga} \left( h^3\right)}
            +\sc{\naabs{\ga} \ti w }
           \notag \\
     &\les \ve^{2+\ga}
                \left(
                \nh{{v_0}}{2+\ga+s_c}
                +\ve^2T\nh{{v_0}}{4+\ga+s_c}
                \right).
           \label{est:ti {Phi_2}}
\end{align}
where the implicit constant depends on $\norm{v_0}_{H^{d-2}_x}$.
Inserting \eqref{est:ti {Phi_2}} into \eqref{est: G_3-1}, we infer that 
\begin{align}
	&
    \ysc{\naabs{\ga}\int_{0}^{t} e^{i(t-s)\jd{}}\jd{-1} {G_3} (s)ds}\notag\\
    &\quad\les 
    \ve^{4+\ga}
    \Big(
    \nh{{v_0}}{4+\ga+s_c}
    +\ve^2T\nh{{v_0}}{6+\ga+s_c}
    +\ve^4T^2\nh{{v_0}}{8+\ga+s_c}
    \Big)
    \notag\\
    &\qquad\qquad
    \cdot
    \Big[
           1+
    \ve^2
    \left(
    \nh{{v_0}}{2+s_c}
    +\ve^2T\nh{{v_0}}{4+s_c}
    \right)
    \Big].
    \label{est:{G_3}}
\end{align}
where the implicit constant depends on $\norm{v_0}_{H^{d-2}_x}$.
This completes the estimate of ${G_3}$.

$\bullet$ Estimation on ${G_R}$ part.

By Lemmas \ref{chain rules}, \ref{strichartz estimate}, \ref{scattering theory}, and \eqref{est:ti {Phi_2}}, 
we have
\begin{align}
    &\ysc{\naabs{\ga}\int_{0}^{t} e^{i(t-s)\jd{}}\jd{-1} {G_R} (s)ds}
    \notag\\
    &\quad\les \sc{\left(
        \ti {\Phi_1} +\ti {\Phi_2}
        \right)}^2 \sc{\naabs{\ga}{R_1}}
    +\sc{ \left(
        \ti {\Phi_1} +\ti {\Phi_2}
        \right) }\sc{{R_1}} \sc{\naabs{\ga}{R_1}}
        \notag \\
        &\qquad
    +\sc{{R_1}}^2\sc{\naabs{\ga}{R_1}}
    \notag \\
    &\quad\les \left[
    \sc h +\ve^2 
    \left(
    \nh{{v_0}}{2+s_c}
    +\ve^2T\nh{{v_0}}{4+s_c}
    \right)               
    \right]^2\sc{\naabs{\ga}{R_1}} 
    \notag\\
    &\qquad
    +\left[
    \sc h +\ve^2 
    \left(
    \nh{{v_0}}{2+s_c}
    +\ve^2T\nh{{v_0}}{4+s_c}
    \right)               
    \right]\sc{{R_1}} \sc{\naabs{\ga}{R_1}}
    \notag\\
    &\qquad
    +\sc{{R_1}}^2\sc{\naabs{\ga}{R_1}}.
    \label{est:{G_R}1}
\end{align}
To avoid the complicated analysis, we use the following estimate of $R_1$:
\begin{align}
   \label{est:R_1 s_c}
    \sc{R_1}
   &\le C\left(\norm{v_0}_{H^{d-2}_x}\right) 
        \left(\ve^2+\ve^4T\nh{{v_0}}{4+s_c}\right).
\end{align} 
Indeed, by \eqref{first order expansion of u} and \eqref{expansion of u}, we have
\begin{align*}
    R_1=R-\ti{\Phi_2}.
\end{align*}
Since the estimate of $\ti \Phi_2$ is given in \eqref{est:ti {Phi_2}}, we need the estimate of $\sc{R}$
which has been obtained in \cite{lei}.
Recall in \cite{lei}, 
under the condition for $T$ satisfying that
for small $\de_0$,
\begin{equation*}
    \ve^4T\nh{v_0}{4+s_c} \le C\left(\norm{v_0}_{H^{d-2}_x}\right) \de_0, 
\end{equation*}
then there holds that
\begin{align*}
   \sc{R}
   &\le C\left(\norm{v_0}_{H^{d-2}_x}\right) 
   \left(
        \ve^2+\ve^4T\nh{{v_0}}{4+s_c}
   \right).
\end{align*}
The above estimate and \eqref{est:ti {Phi_2}} imply \eqref{est:R_1 s_c}.
Inserting \eqref{est:R_1 s_c} into \eqref{est:{G_R}1},
we obtain 
\begin{align}
   \ysc{\naabs{\ga}\int_{0}^{t} e^{i(t-s)\jd{}}\jd{-1} {G_R} (s)ds}
       &\les 
       \left(
       \sc{h} 
       +\ve^2\nh{v_0}{2+s_c}
       +\de_0               
       \right)^2
       \sc{\naabs{\ga}R_1},
       \label{est:{G_R}}
\end{align}
where the implicit constant is dependent of  $\norm{v_0}_{H^{d-2}_x}$.
Thus we establish the estimate of $G_R$.

Collecting those estimates \eqref{est: {G_1}}, \eqref{est: {G_2}}, \eqref{est:{G_3}},
and \eqref{est:{G_R}}, 
we complete the proof of this lemma.
\end{proof}

\section{Non-relativistic limit}
\label{Non-relativistic limit}
In this section, we prove Theorem \ref{main theorem}.
The proof is divided into two parts: the regular case and the non-regular case.

\subsection{Regular case}

\label{the regular case}
In this subsection, we establish the proof of Theorem \ref{main theorem} for $H^8$-data.  
We divide the time interval into two distinct ranges:
a short-time range and a long-time range, 
which will be addressed in different manners. 
For the short-time range, 
the estimate of $\phi$ established in the previous section is valid.
For the long-time range, we have to seek the help of conservation law.
Combining both cases, we ultimately establish the convergence rate estimate of the error term $r_1$ for $H^8$-data. 

\begin{prop}
    \label{main prop}
    Let $d=2,3$ and $u_0,u_1 \in H^{8}_x(\R^d)$.
    Assume that $\Phi_1$ and $\Phi_2$ satisfy \eqref{Phi_1} and \eqref{Phi_2},
    and $v$, $w$, and $r_1$ satisfy equations \eqref{eq:v}, \eqref{eq:f}, and
    \eqref{eq:r_1}, respectively.
    Then we have for all $t\ge 0$,
    \[ \nl{u^\ve -\Phi_1 -\ve^2 \Phi_2}{2}
    \le C
    \ve^{4} \big(1+t\nh{v_0}{6}+t^2\nh{v_0}{8}\big),
    \]
    where the  constant $C$  depends on $\norm{u_0}_{H_x^{4}}$ and
    $\norm{u_1}_{H_x^{4}}$.
\end{prop}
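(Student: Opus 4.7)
\emph{Plan of proof.} I would follow the framework of Lemmas \ref{lem:estimate of r with small perturbation}--\ref{lem:estimate of r with bootstrap} refined by the nonlinear estimate of Lemma \ref{lem:estimate of phi}. First, rescale the problem: set $R_1=S_\ve r_1$ as in \eqref{transform: R_1, h, ti f} and apply the wave-type decomposition $\phi=(\p_t+i\jd{})\jd{-1}R_1$ from \eqref{R1, phi}, so that $R_1=\im\phi$ satisfies \eqref{eq:phi} with initial datum \eqref{phi0}. By the $S_\ve$ scaling one has $\nl{r_1(\ve^2 t)}{2}=\ve^{s_c}\nl{R_1(t)}{2}\les \ve^{s_c}\yo{\phi}$, so it suffices to control $\yo{\phi}$ on the scaled-time interval $[0,T]$ corresponding to original time $\ve^2 T$.

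\emph{Initial data and nonlinear estimate.} Applying Lemma \ref{strichartz estimate} to the Duhamel formula for $\phi$ gives
\begin{align*}
    \yo{\phi}\les \nh{\jd{1/2}\phi_0}{s_c}+\yo{\int_0^t e^{i(t-s)\jd{}}\jd{-1}(G_1+G_2+G_3+G_R)(s)\,ds}.
\end{align*}
Using the explicit formula for $\p_t r_1(0)$ in \eqref{eq:r_1} together with \eqref{eq:v}--\eqref{eq:f} and the scaling identities of $S_\ve$, one mimics computation \eqref{3.5} to conclude $\nh{\jd{1/2}\phi_0}{s_c}\les \ve^{4-s_c}$ (since $r_1(0)=0$ and $\p_t r_1(0)=O(\ve^2)$ in $L^2_x$, corresponding to $a=4$ in Lemma \ref{lem:estimate of r with small perturbation}). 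For the nonlinear part, invoke Lemma \ref{lem:estimate of phi} with $\ga=-s_c$, obtaining
\begin{align*}
    \yo{\phi}\les \ve^{4-s_c}\big(\nh{v_0}{4}+\ve^2 T\nh{v_0}{6}+\ve^4 T^2\nh{v_0}{8}\big)+\big(\sc{h}+\ve^2\nh{v_0}{2+s_c}+\de_0\big)^2\so{R_1}.
\end{align*}

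\emph{Closing the bootstrap.} The main obstacle is that the coefficient $\sc{h}=\sc{v}$ (by scaling invariance of $S_{s_c}$) is globally finite via Lemma \ref{scattering theory} but is not \emph{a priori} small. Following the interval partitioning argument from the proof of Lemma \ref{lem:estimate of r with bootstrap}, I would decompose $[0,T]$ into finitely many consecutive subintervals $I_k$, the number depending only on $\norm{v_0}_{H^{d-2}_x}$, on each of which $\sc{h}_{(I_k)}$ is small enough that, together with $\ve^2\nh{v_0}{2+s_c}+\de_0$, the coefficient of $\so{R_1}$ is at most $\tfrac12$. Absorbing this term on the left on each $I_k$ and iterating across the intervals, with continuity of $\phi$ providing the datum for the next step, yields
\begin{align*}
    \yo{\phi}\les \ve^{4-s_c}\big(1+\ve^2 T\nh{v_0}{6}+\ve^4 T^2\nh{v_0}{8}\big)
\end{align*}
throughout the short-time range $\ve^4 T\nh{v_0}{4+s_c}\les \de_0$. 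Undoing the scaling gives the claim of the proposition for $t\les \de_0\ve^{-2}/\nh{v_0}{4+s_c}$.

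\emph{Long-time regime.} For $t$ beyond the bootstrap validity, the above argument breaks down; here I would fall back on the crude triangle bound $\nl{r_1}{2}\le \nl{u^\ve}{2}+\nl{\Phi_1}{2}+\ve^2\nl{\Phi_2}{2}$, in which $\nl{u^\ve}{2}$ is controlled by the $L^2$-conservation for \eqref{eq:u}, $\nl{\Phi_1}{2}\les \nl{v}{2}$ is conserved by NLS, and $\ve^2\nl{\Phi_2}{2}$ is bounded via Lemma \ref{lem:estimate of f} by $\ve^2(\nh{v_0}{2}+t\nh{v_0}{4})$. Since in this regime $\ve^4 t^2\nh{v_0}{8}\ges 1$, this crude estimate is dominated by the right-hand side of the proposition, so combining both regimes completes the proof.
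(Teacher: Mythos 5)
Your proposal is correct and follows essentially the same route as the paper: the same rescaling and wave decomposition, the same application of Lemma \ref{lem:estimate of phi} with $\ga=-s_c$ plus the initial-data estimate $\nh{\jd{1/2}\phi_0}{s_c}\les\ve^{4-s_c}$, the same interval-partition bootstrap to absorb the $\so{R_1}$ term in the short-time regime, and the same fallback to the (energy-)conservation bound $\ntx{u^\ve}{\oo}{2}\les\norm{(u_0,u_1)}_{H^1\times L^2}$ together with Lemma \ref{lem:estimate of f} in the long-time regime, where $\ve^2 t\ges\de_0$ makes the crude bound dominated by $\ve^4 t^2\nh{v_0}{8}$. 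The only cosmetic difference is that what you call ``$L^2$-conservation'' is really the energy conservation law supplying a uniform $L^2_x$ bound on $u^\ve$.
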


\begin{proof}
In view of Lemma \ref{lem:estimate of phi},
we divide time interval $I=[0, T]$ into the short and long time regimes, respectively,
\[  C_0\ve^4T \nh{v_0}{4+s_c} \le \de_0  \mbox{\quad 
	and \quad}  
C_0\ve^4T \nh{v_0}{4+s_c} > \de_0,
\]
where the constant $C_0$ is the reciprocal of $C\big(\norm{v_0}_{H^4_x}\big)$ in Lemma \ref{lem:estimate of phi}.

Case 1(short time): $C_0\ve^4T \nh{v_0}{4+s_c}\le \de_0$.
In this case, by Lemma \ref{lem:estimate of phi},
we get for $\ga=-s_c$,
\begin{align}
    \yo{ \phi}
    &\le 
    C_1\nl{\jd{\frac12}{\phi_0}}{2}
    +C_2
    \ve^{4-s_c}  
    \Big(
    1
    +\ve^2T\nh{{v_0}}{6}
    +\ve^4T^2\nh{{v_0}}{8}
    \Big)
    \notag\\
    &\quad
    + C_3\left(
    \sc{h}
    +\ve^2+\de_0             
    \right)^2
    \so{R_1},
    \label{est:phi-1}
\end{align}
where constants $C_1$, $C_2$ and $C_3$ depend on $\norm{v_0}_{H^{4}_x}.$
By $\sc{h_0}<\oo$ and \eqref{R1, phi}, we argue by bootstrap as shown in Lemma \ref{lem:estimate of r with bootstrap} and obtain for $\ve,  \de_0$ small,
\begin{align}
 \norm{ \phi}_{Y_0(I)}
&\les 
\nl{\jd{\frac12}{\phi_0}}{2}
+
    \ve^{4-s_c}
\Big(
1
+\ve^2T\nh{{v_0}}{6}
+\ve^4T^2\nh{{v_0}}{8}
\Big).
     \label{est:ga is neq 0}
 \end{align}
 By virtue of \eqref{eq:r_1}, \eqref{est:p_tf}, \eqref{phi0}, 
 and Remark \ref{rem:chain rules}, we have for $\ga \in [-s_c, 0],$
 \begin{align}
     \nh{\naabs{\ga}\jd{\frac12}{\phi_0}}{s_c}   
     &\les \ve^{2+\ga}\nh{S_\ve\left(\naabs{\ga}\p_t {r_1}(0)\right)}{s_c}\notag \\
     &\les \ve^{4+\ga}\left(\nh{\naabs{\ga}\p_t (v^{3})(0)}{s_c}+\nh{\naabs{\ga}\p_t w(0)}{s_c}\right)
     \notag \\
     &\les \ve^{4+\ga}
     \nh{{v_0}}{4+\ga+s_c},
     \label{est:phi0}
 \end{align}
 where the constant depends on $\norm{v_0}_{H^{d-2}_x}$.
The estimate \eqref{est:phi0} together with \eqref{R1, phi} and \eqref{est:ga is neq 0} yields that
\begin{align*}
   \norm{R_1}_{L^\oo_tL^2_x([0, T])}
   &\les
   \norm{\phi}_{L^\oo_tL^2_x([0, T])}
   \les
    \ve^{4-s_c}
\Big(
1
+\ve^2T\nh{{v_0}}{6}
+\ve^4T^2\nh{{v_0}}{8}
\Big).
\end{align*} 
Applying the scaling transform backward, we establish that
\begin{align}
    \norm{r_1}_{L^\oo_tL^2_x([0, T])}
    =
    \ve^{s_c}
    \norm{R_1}_{L^\oo_tL^2_x([0, \ve^{-2 }T])}
   \les
    \ve^{4}
    \Big(
    1
    +T\nh{{v_0}}{6}
    +T^2\nh{{v_0}}{8}
    \Big).
  \label{est:r_1 case 1}
\end{align}
We thus complete the proof of Proposition \ref{main prop} for the short-time range.

Case 2(long time): $C_0\ve^4T \nh{v_0}{4+s_c} > \de_0$.
Thanks to the energy conservation law of equation \eqref{eq:u}, we have
\begin{align*}
    &\int
    \left(
    \left|\ve^{2}\p_{t}u^{\ve}\right|^2
    +\left|\na u^{\ve}\right|^2 
    +\ve^{-2}\left|u^{\ve}\right|^2
    +\frac{1}{2}\left|u^{\ve}\right|^4
    \right)dx\\
    &\quad=\int
    \Big[
    \ve^{-2}
        \left(
        \left| u_0\right|^2
        +\left| u_1\right|^2
        \right)
    +\left|\na u_0\right|^2 
    +\frac{1}{2}\left|u_0\right|^4
    \Big]dx,
\end{align*}
which implies
\[\ntx{u^{\ve}}{\oo}{2}\les\norm{\left(u_0,u_1\right)}_{H^{1}\times L^{2}}.\]
By the definition of $r_1$ and Lemma \ref{lem:estimate of f},
we obtain
\begin{align}
    \norm{r_1}_{L^\oo_tL^2_x([0, T])}
   &\les
   \norm{\left(u_0,u_1\right)}_{H^{1}\times L^{2}}
   +\ve^{2}
   \left(
   \norm{v^3}_{L^\oo_tL^2_x([0, T])}
   +\norm{w}_{L^\oo_tL^2_x([0, T])}
   \right)
   \notag\\
   &\les
   \norm{\left(u_0,u_1\right)}_{H^{1}\times L^{2}}
   +\ve^{2}
   \left(
   1+\nh{{v_0}}{2}
   +T\nh{{v_0}}{4}
   \right)
   \notag\\
   &\les 
   1+\ve^2\left(1+T\right),
    \label{est:r_1 case 2}
\end{align}
which implies \eqref{est:r_1 case 1} also holds for the long time.

Combining \eqref{est:r_1 case 1} and \eqref{est:r_1 case 2}, we prove Proposition \ref{main prop}.
\end{proof}

\subsection{Non-regular case}
\label{the nonregular case}
In this subsection, we aim to reduce the regularity requirement for initial data $v_0$
in Proposition \ref{main prop}. 
A natural approach is to decompose initial data $v_0$ into the high frequency $P_{> N}v_0$ and the low frequency $P_{\le N}v_0$, then we need to analyze the estimates of the corresponding solutions. 
On the one hand, the initial data $P_{\le N}v_0$ is smooth, and thus we can apply the result of Proposition \ref{main prop}.
On the other hand, 
we need to establish the estimates of solutions related to initial data $P_{> N}v_0$.
Combining both estimates of solutions with initial data $P_{\le N}v_0$ and $P_{> N}v_0$, we can choose an adapted frequency $N$ to gain the optimal convergence rate for $r_1$.
As we shall see, the limitation $v_0 \in H^\al, \al \ge 4 $ is needed for our discussion when addressing the estimate of $w$ related to initial data $P_{>N}w_0$.

\subsubsection{Estimate of $v^N$}
Suppose that $v_0\in H^\al_x, \al\in [4,8]$.
We embark on the high-low frequency decomposition for the initial data $v_0$,
and then we aim to get the $X_{s_c}$ norm estimate of solutions related to the high-frequency initial data $P_{>N} {v_0}$. 

First, we denote $v_N$ to be the solution of the following equation:
\eq{\label{eq:v_N} 
    &2i \p_t v_N-\De v_N =-3|v_N|^2v_N,\\
    & v_N(0)=P_{\le N} {v_0} .}
Denote $v^{N}=v-v_N$, then $v^{N}$ is the solution to the following equation:
\eq{\label{eq:v^N} 
    &2i \p_t v^{N}-\De v^{N} =-3(|v|^2v-|v_N|^2 v_N),\\
    & v^{N}(0)=P_{>N} {v_0},}
and the corresponding Duhamel formula reads
\[v^{N}(t)
    =e^{-\frac i2 t\De }P_{>N} {v_0}
     +\frac{3i}{2}\int_{0}^{t}e^{-\frac i2(t-s)\De}
\left(
|v|^2v-|v_N|^2 v_N
\right)(s)ds.
\]
By Lemmas \ref{strichartz estimate} and \ref{scattering theory}, we have for $\ga\geq -s_c$,
\begin{align}
       \xsc{\naabs{\ga}v^N}
       &\les\nh{\naabs{\ga}P_{>N} {v_0}}{s_c}+\zsc{\naabs{\ga}\left(|v|^2v-|v_N|^2 v_N\right)}
       \notag\\
        &\les
    \nh{P_{>N}{v_0}}{\ga+s_c}
    +\sc{\naabs{\ga}v^N}\sc{v}^2
    +\chi_0\sc{\naabs{\ga}v}\sc{v}\sc{v^N},
    \label{add v^N}
\end{align}
where $\chi_0=0$ when $ -s_c \le \ga \le 0$, and $\chi_0=1$ otherwise.

For $\ga\in [-s_c, 0]$, we have 
\[\xsc{\naabs{\ga}v^N}\les\nh{P_{>N} {v_0}}{\ga+s_c}+\sc{\naabs{\ga}v^N}\sc{v}^2.\]
Since $v_0\in H^\al_x$ with $\al\in [4,8]$, we have  
\[\sc{v}\le  C\left(\norm{v_0}_{H^{d-2}_x}\right)\nh{{v_0}}{s_c}< \oo.\]
Then, by a standard bootstrap argument as shown in Lemma \ref{lem:estimate of r with bootstrap}, we obtain for $\ga\in [-s_c, 0]$,
\begin{equation}
    \label{est:|na|^ga_0 v^N}
    \xsc{\naabs{\ga}v^N}\les\nh{P_{>N}{v_0}}{\ga+s_c}.
\end{equation}

For $\ga>0$, in view of \eqref{add v^N} and \eqref{est:|na|^ga_0 v^N} with $\ga=0$, we obtain
\begin{align*}
    \xsc{\naabs{\ga}v^N}
    &\les
    \nh{P_{>N}{v_0}}{\ga+s_c}
    +\sc{\naabs{\ga}v^N}\sc{v}^2
    +\nh{{v_0}}{\ga+s_c}\nh{P_{>N}{v_0}}{s_c}.
\end{align*}
Since $\sc{v}<\oo$, a similar bootstrap argument yields that 
\begin{equation}
    \label{est:|na|^ga v^N}
    \xsc{\naabs{\ga}v^N}\les\nh{P_{>N}{v_0}}{\ga+s_c}+\nh{{v_0}}{\ga+s_c}\nh{P_{>N}{v_0}}{s_c}.
\end{equation}

Therefore, by \eqref{est:|na|^ga_0 v^N} and \eqref{est:|na|^ga v^N}, we establish
\begin{equation}
    \label{est:|na|^ga v^N-1}
    \xsc{\naabs{\ga}v^N}\les\nh{P_{>N}{v_0}}{\ga+s_c}+\chi_0\nh{{v_0}}{\ga+s_c}\nh{P_{>N}{v_0}}{s_c},
\end{equation}
where $\chi_0=0$ when $ -s_c \le \ga \le 0$, and $\chi_0=1$ otherwise.
In particular, we have for $\ga =-s_c$ and $v_0\in H^\al, \al \in [4,8]$,
\begin{equation}
    \label{est: v^N}
    \ntx{v^N}{\oo}{2}
    \les \nl{P_{>N}v_0}{2}
    \les N^{-\al}.
\end{equation}

\subsubsection{Estimate of $w^N$}
Suppose that $v_0\in H^\al_x, \al \in [4,8]$,
arguing analogously as the estimate of $v^N$, 
we aim to obtain the $L^\oo_tL^2_x$ estimate of solutions related to the high-frequency initial data $P_{>N}w_0$.

Similarly, denote $w_N$ to be the solution of the following equation:
\eq{\label{eq:f_N} 
    &2i\p_t w_N 
    -\De w_N  +\p_{tt} v_N +3 \left(\frac 18 |v_N|^4v_N+v^2_N  \ba{w_N } + 2|v_N|^2 w_N  \right)=0,\\
    & {w_{0}}_N
    =-\frac14 \left(P_{\le N}{v_0}\right)^3+\frac 18 \left(P_{\le N}\ba {v_0}\right)^3 +\frac14
    \left(\De P_{\le N}{v_0}-\De P_{\le N}\ba {v_0}\right)
    \\
    &\qquad \ \ 
     -\frac34\left(\left|P_{\le N}{v_0}\right|^2P_{\le N} {v_0}-\left|P_{\le N}{v_0}\right|^2P_{\le N}\ba {v_0}\right),}
and $w^N=w-w_N$ to be the solution of the following equation:
\eq{\label{eq:f^N} 
    &2i\p_t w^N  -\De w^N  +\p_{tt} v^N 
    +3\Big[
    \frac 18 \left(|v|^4v-|v_N|^4v_N\right)
    +\left(v^2 \ba w-v^2_N \ba  {w_N}\right)
    +\left(2|v|^2w-2|v_N|^2 w_N\right) 
    \Big]
    =0,\\
    & w_{0}^N =w_0-{w_0}_N.
    }
The corresponding Duhamel formula reads
\begin{align*}
   w^N(t)
   &=e^{-\frac i2 t\De }w^N_0
    +\frac{i}{2}\int_{0}^{t}e^{-\frac i2(t-s)\De}
    \Big\{
    \p_{ss} v^N   
   +3\Big[
   \frac 18 \left(|v|^4v-|v_N|^4v_N\right)        
   \\
   &\qquad
   +\left(v^2 \ba w-v^2_N \ba { w_N}\right)
   +\left(2|v|^2 w -2|v_N|^2w_N\right) 
   \Big]
   \Big\}(s)ds.
\end{align*}
By the definition of $w_0$ and ${w_0}_N$, we note that 
\[w_{0}^N=O \left(
{v_0}^2P_{>N}{v_0}+\De P_{>N}{v_0}
\right),\]
then we have for $  \ga \in [-s_c, 0]$,
\begin{align}
    \nh{\naabs{\ga}w_{0}^N}{s_c}
    &\les
    \nh{P_{> N}{v_0}}{\ga+s_c}\nh{{v_0}}{2+s_c}+\nh{P_{> N}{v_0}}{2+\ga+s_c}.
    \label{add est: w_0^N}
\end{align}
Note that 
\begin{align*}
    \p_{tt} v^N 
    &=O\left(
    \De^2 v^N 
    +\De \left(v^N v^2 \right)
    +v^2\De v^N
    +vv^N\De v
    +v^4v^N
    \right). 
\end{align*}
In light of this equality, Lemma \ref{strichartz estimate}, and Remark \ref{rem:chain rules},
 we have for $  \ga \in [-s_c, 0]$,
\begin{align*}
     & \xsc{\naabs{\ga}\int_{0}^{t}e^{-\frac i2 \left(t-s\right)\De}
          \p_{ss}v^N (s)ds} 
        \notag  \\
   &\quad
   \les
    \nth{\naabs{\ga+4} v^N}{1}{s_c}
    +\sc{\naabs{\ga+2}v^N}\sc v^2
    +\sc{v^N}\sc{\naabs{\ga+2}v}\sc v 
    \notag\\
    &\qquad
    +\sc{\naabs{\ga}v^N} \sc v^2 \ltx{v}{\oo}^2.
\end{align*}
This together with \eqref{est:|na|^ga v^N-1} and Lemma \ref{scattering theory}, yields that 
\begin{align}
    &\xsc{\naabs{\ga}\int_{0}^{t}e^{-\frac i2 \left(t-s\right)\De}
       \p_{ss}v^N (s)ds} 
       \notag\\
    &\quad \les
    \nh{P_{>N}{v_0}}{2+\ga+s_c}
    +\nh{{v_0}}{2+\ga+s_c}\nh{P_{>N}{v_0}}{s_c}
    +\nh{P_{>N}{v_0}}{\ga+s_c}\nh{{v_0}}{2+s_c}
    \notag \\
    &\qquad
    +T \left(
    \nh{P_{>N}{v_0}}{4+\ga+s_c} 
    +\nh{{v_0}}{4+\ga+s_c}\nh{P_{>N}{v_0}}{s_c}
    \right).
    \label{p_tt v^N}
\end{align}
By \eqref{p_tt v^N}, Lemma \ref{strichartz estimate}, and the Duhamel formula of $w^N$,
we obtain for $  \ga \in [-s_c, 0]$,
\begin{align*}
    \xsc{\naabs{\ga}w^N}
    & 
     \les
     \nh{\naabs{\ga}w_0^N}{s_c}
     + \eqref{p_tt v^N}
     +\zsc{\naabs{\ga}\left( |v|^4v-|v_N|^4v_N\right) }
     +\zsc{\naabs{\ga}\left( v^2\ba w-v^2_N\ba {w_N}\right) }
     \\
     &\quad
     +\zsc{\naabs{\ga}\left(|v|^2 w-|v_N|^2 w_N\right)}
     \\
     &\les
     \nh{\naabs{\ga}w_0^N}{s_c}
      + \eqref{p_tt v^N}
      +\sc{\naabs{\ga}v^N} \sc{v}^2 \ltx{v}{\oo}^2
      +\sc{\naabs{\ga}w^N}\sc v^2
      \\
      &\quad
      +\sc{v^N} \sc{v} \sc{\naabs{\ga} w}.
\end{align*}
Then due to \eqref{est:|na|^ga v^N-1}, \eqref{add est: w_0^N}, \eqref{p_tt v^N}, and Lemma \ref{lem:estimate of f}, we get
\begin{align*}
    \xsc{\naabs{\ga}w^N}
    &\les
        \nh{P_{>N}{v_0}}{2+\ga+s_c}
        +\nh{{v_0}}{2+\ga+s_c}\nh{P_{>N}{v_0}}{s_c}
        +\nh{P_{>N}{v_0}}{\ga+s_c}\nh{{v_0}}{2+s_c}
    \\
    &\quad
    +T \left(
    \nh{P_{>N}{v_0}}{4+\ga+s_c} 
    +\nh{{v_0}}{4+\ga+s_c}\nh{P_{>N}{v_0}}{s_c}
    \right)
    +\sc{v}^2\sc{\naabs{\ga}w^N}. 
\end{align*}
Since 
$ \sc{v}  < \oo,$ it follows from a standard bootstrap argument that for $  \ga \in [-s_c, 0]$,
\begin{align*}
   \xsc{\naabs{\ga}w^N} 
   &\les 
   \nh{P_{>N}{v_0}}{\ga+s_c}\nh{{v_0}}{2+s_c}
   +\nh{P_{>N}{v_0}}{2+\ga+s_c}
   +\nh{{v_0}}{2+\ga+s_c}\nh{P_{>N}{v_0}}{s_c}
   \\
   &\quad
   + T \left(
   \nh{P_{>N}{v_0}}{4+\ga+s_c} 
   +\nh{{v_0}}{4+\ga+s_c}\nh{P_{>N}{v_0}}{s_c}
   \right).
\end{align*}
This implies that for $\ga=-s_c$ and $ v_0\in H^\al_x, \al \in [4, 8]$,
\begin{align*}
    \norm{w^N}_{L^\oo_tL^2_x([0, T])}
    &\les N^{-\al}+N^{2-\al}+N^{s_c-\al}
           +T\left(N^{4-\al}+N^{s_c-\al}\right).
\end{align*}
Choosing $N>1$, we obtain
\begin{align}
    \label{est:f^N}
    \norm{w^N}_{L^\oo_tL^2_x([0, T])}
    &\les N^{2-\al}+ TN^{4-\al},
\end{align} 
where the implicit constant depends on $\norm{v_0}_{H^\al_x}$.
We thus have established the $L^\oo_tL^2_x $ estimate of $w^N$.

\subsubsection{Proof of Theorem \ref{main theorem}}
\label{proof of main theorem}
In this subsection, we finish the proof of Theorem \ref{main theorem}. 
For convenience, we denote
\begin{equation}
    \begin{aligned}
   {{\Phi_1}}_N:&=\ei{}v_N+c.c.,
    \\
   {{\Phi_2}}_N:&=\frac 18 \ei{3}{v_N}^3
            +\ei{}w_N+c.c..
\end{aligned}
\label{w1,w2}
\end{equation}
On account of \eqref{w1,w2} and \eqref{expansion of u}, we obtain
\begin{align*}
  \norm{r_1}_{L^\oo_tL^2_x([0, T])}
  &\les 
     \norm{u^\ve-{{\Phi_1}}_N-\ve^2{{\Phi_2}}_N}_{L^\oo_tL^2_x([0, T])}
     +\norm{\Phi_1-{{\Phi_1}}_N}_{L^\oo_tL^2_x([0, T])}
     \\
     &\quad
     +\ve^2\norm{\Phi_2- {{\Phi_2}}_N}_{L^\oo_tL^2_x([0, T])}
     \\
   &\les 
   \norm{u^\ve-{{\Phi_1}}_N-\ve^2{{\Phi_2}}_N}_{L^\oo_tL^2_x([0, T])} 
   +\norm{v^N}_{L^\oo_tL^2_x([0, T])} 
   \\
   &\quad
   +\ve^2\norm{v^3-v^3_N}_{L^\oo_tL^2_x([0, T])} 
   +\ve^2\norm{w^N}_{L^\oo_tL^2_x([0, T])}.
\end{align*}
Thanks to \eqref{est: v^N}, we have
\begin{align}
    \norm{v^3-v^3_N}_{L^\oo_tL^2_x([0, T])}
    \les
    \ntx{v^N}{\oo}{2}\ltx{v}{\oo}^2
    \les
    \norm{P_{>N}{v_0}}_{L_x^2}\nh{{v_0}}{2+s_c}\les
    N^{-\al}.
    \label{est:v^3-v_N^3}
\end{align}
By \eqref{est: v^N}, \eqref{est:f^N}, and \eqref{est:v^3-v_N^3},
 we obtain for small $\ve$ and $N>1$,
\begin{align}
    &\norm{v^N}_{L^\oo_tL^2_x([0, T])}    
    +\ve^2\norm{v^3-v^3_N}_{L^\oo_tL^2_x([0, T])} 
    +\ve^2\norm{w^N}_{L^\oo_tL^2_x([0, T])}
    \notag\\
    &\quad
    \les N^{-\al}
    \left(
    1+\ve^2N^{2}+\ve^2TN^{4}
    \right).
    \label{high frequency data}
\end{align}
This finishes the high-frequency part of the initial data of the estimate of $r_1$.

On the other hand, we apply
a similar argument to the proof of Proposition \ref{main prop} to estimate the ${L^\oo_tL^2_x}$ norm of $u^\ve-{{\Phi_1}}_N-\ve^2{{\Phi_2}}_N$.
Scaling back, we suppose that $T$ satisfies
\begin{align}
    \ve^{2}T\nh{{v_0}}{4+s_c}
    \le	C\left(\norm{v_0}_{H^{d-2}_x}\right) \de_0.
    \label{assumption of T(scaling back)}
\end{align}
Since $v_0\in H^\al_x, \al \in [4, 8]$,
it reduces to assume that for small $\ve, \de_0$, $T$ satisfies either
\begin{align}
    \label{de_0}
     \ve^2TN^{s_c}
    \le C\left(\norm{v_0}_{H^{4}_x}\right) \de_0,
    \quad \al \in [4, 4+s_c),
\end{align}
or
\begin{align}
    \label{new de_0}
    \ve^2T
    \le C\left(\norm{v_0}_{H^{4+s_c}_x}\right) \de_0,
    \quad
    \al \in [4+s_c, 8].
\end{align}
Then imitating the proof of Proposition \ref{main prop},
we obtain that under the assumption \eqref{de_0} or \eqref{new de_0},
\begin{align*}
    \norm{u^\ve-{{\Phi_1}}_N-\ve^2{{\Phi_2}}_N}_{L^\oo_tL^2_x([0, T])}
    &\les \ve^4
    \left(
        1+T \beta_\al (N)
        +T^2N^{8-\al}
    \right),
\end{align*}
where $\beta_\al (N)= N^{6-\al}$ for $\al \in [4,6)$, and $\beta_\al (N)= 1$ for $\al \in [6,8]$.
This estimate together with \eqref{high frequency data} yields that
\begin{align}
    \norm{r_1}_{L^\oo_tL^2_x([0, T])}
    &\les N^{-\al}
    \left(
    1+\ve^2N^{2}+\ve^2TN^{4}
    \right)
    +\ve^4
    \left(
    1+T \beta_\al (N)
    +T^2N^{8-\al}
    \right).
    \label{the mid-estimate of r_1}
\end{align}

Now we are in a position to decide the suitable frequency $N$ to get the sharp estimate of $\norm{r_1}_{L^\oo_tL^2_x([0, T])}$.
The proof is divided into the case $\al \in [4, 6)$ and the case $\al \in [6, 8].$

\textbf{Case A}: First, we consider $\al \in [4, 6)$ and divide it into the following subcases:
    
   $\bullet$ \textbf{Subcase $\text{A}_1$}: $1\ges \ve^2N^2, \ve^2TN^4$ and $1\ges TN^2$.
   
    In this case,  \eqref{the mid-estimate of r_1} becomes
    \begin{align*}
        \norm{r_1}_{L^\oo_tL^2_x([0, T])}
        \les \ve^4
            +N^{-\al}
            +\ve^4TN^{6-\al}.
    \end{align*} 
    Choosing $N=\left(\ve^4T\right)^{-\frac{1}{6}}$, we have that
    $T \sim \ve^2$, which further implies \eqref{de_0} or
    \eqref{new de_0}.
    Hence, noting that $T \sim \ve^2$,
    \[ \norm{r_1}_{L^\oo_tL^2_x([0, T])}
    \les \ve^4	
    +\left(\ve^4T \right)^{\frac{\al}{6}}
    \les  \ve^4. \]
    
    $\bullet$ \textbf{Subcase $\text{A}_2$}:  $1\ges \ve^2N^2, \ve^2TN^4$, and $ TN^2 \ges 1$.
    
    In this case, \eqref{the mid-estimate of r_1} reduces to
    \begin{align*}
        \norm{r_1}_{L^\oo_tL^2_x([0, T])}
        \les \ve^4
        +N^{-\al}
        +\ve^4T^2N^{8-\al}.
    \end{align*}     
    Choosing $N=\left(\ve^2T\right)^{-\frac{1}{4}}$, we have that $T \ges \ve^2$.    
    If 
    $\ve^2\les T\les \ve^{-2}\de_0$,
    then \eqref{de_0} or \eqref{new de_0} holds,
    and we have
    \begin{align*}
        \norm{r_1}_{L^\oo_tL^2_x([0, T])}
        \les \ve^4
        +\left(\ve^2T\right)^\frac{\al}{4}.
    \end{align*}  
    If $T\ges \ve^{-2}\de_0$,
    then arguing analogously as \eqref{est:r_1 case 2} shows that
    \begin{align*}
         \norm{r_1}_{L^\oo_tL^2_x([0, T])}
         &\les 1+\ve^2T
         \les  \left(\ve^2T\right)^\frac{\al}{4}.
    \end{align*}  
    Combining both estimates for $T$, we establish
        \begin{align*}
        \norm{r_1}_{L^\oo_tL^2_x([0, T])}
        \les \ve^4
        +\left(\ve^2T\right)^\frac{\al}{4}.
    \end{align*}
    
    $\bullet$ \textbf{Subcase $\text{A}_3$}: $\ve^2N^2 \ges 1, \ve^2TN^4$ and $ 1\ges TN^2$.
    
    In this case, \eqref{the mid-estimate of r_1} is simplified to
    \begin{align*}
        \norm{r_1}_{L^\oo_tL^2_x([0, T])}
        \les \ve^4
        +\ve^2N^{2-\al}
        +\ve^4TN^{6-\al}.
    \end{align*}
    Choosing $N=\left(\ve^2T\right)^{-\frac{1}{4}}$, we have that $T\les \ve^2$,
    which further implies \eqref{de_0} or \eqref{new de_0}.  
     Thus we have that
    \[
    \norm{r_1}_{L^\oo_tL^2_x([0, T])}
    \les \ve^4 
    +\ve^{{\frac{\al+2}{2}}}
    T^{\frac{\al-2}{4}}
    \les \ve^4. \]

    $\bullet$ \textbf{Subcase $\text{A}_4$}: $\ve^2N^2 \ges 1, \ve^2TN^4$, and $ TN^2 \ges 1$.
    
    In this case, \eqref{the mid-estimate of r_1} changes to
    \begin{align*}
        \norm{r_1}_{L^\oo_tL^2_x([0, T])}
        \les \ve^4
        +\ve^2N^{2-\al}
        +\ve^4T^2N^{8-\al}.
    \end{align*}  
    Choosing $N=\left(\ve T\right)^{-\frac{1}{3}}$, we have that $T\sim \ve^2$,
    which further implies \eqref{de_0} or
    \eqref{new de_0}.
    It infers that
    \[ \norm{r_1}_{L^\oo_tL^2_x([0, T])}
    \les \ve^4	
    +\ve^2\left(\ve T \right)^{\frac{\al-2}{3}}
    \les \ve^4. \]
    
    $\bullet$ \textbf{Subcase $\text{A}_5$}: $ \ve^2TN^4\ges 1,\ve^2N^2$ and $ 1\ges TN^2 $.
    
    It follows from \eqref{the mid-estimate of r_1} that
    \begin{align*}
        \norm{r_1}_{L^\oo_tL^2_x([0, T])}
        \les \ve^4
        +\ve^2TN^{4-\al}+\ve^4TN^{6-\al}.
    \end{align*}  
    Choosing $N=\ve^{-1}$, we have that $T\sim \ve^2$,
    which further implies \eqref{de_0} or
    \eqref{new de_0}.
    Therefore, we deduce that
    \[ \norm{r_1}_{L^\oo_tL^2_x([0, T])}
    \les \ve^4	
    +\ve^{\al-2}T
    \les \ve^4. \]
    
    $\bullet$ \textbf{Subcase $\text{A}_6$}: $ \ve^2TN^4\ges 1,\ve^2N^2$, and $ TN^2 \ges 1$.
    
    We obtain from \eqref{the mid-estimate of r_1} that, 
    \begin{align*}
        \norm{r_1}_{L^\oo_tL^2_x([0, T])}
        \les \ve^4
        +\ve^2TN^{4-\al}
        +\ve^4T^2N^{8-\al}.
    \end{align*}     
    Choosing $N=\left(\ve^2T\right)^{-\frac{1}{4}}$, we have that $T \ges \ve^2$. The above estimate reduces to 
    \begin{align*}
        \norm{r_1}_{L^\oo_tL^2_x([0, T])}
        \les \ve^4
        +\left(\ve^2T\right)^\frac{\al}{4}.
    \end{align*}
     Proceeding similarly as \textbf{Subcase $\text{A}_2$},
     it turns out that
    \begin{align*}
        \norm{r_1}_{L^\oo_tL^2_x([0, T])}
        \les \ve^4
        +\left(\ve^2T\right)^\frac{\al}{4}.
    \end{align*}
    
\textbf{Case B}: Now, we consider the case $\al \in [6, 8]$ and divide it into the following cases:
The proof is similar to the case $\al \in [4, 6)$.

     $\bullet$ \textbf{Subcase $\text{B}_1$}: $1\ges \ve^2N^2, \ve^2TN^4$.
    
    In this case, \eqref{the mid-estimate of r_1} becomes
    \begin{align*}
        \norm{r_1}_{L^\oo_tL^2_x([0, T])}
        \les \ve^4(1+T)
        +N^{-\al}
        +\ve^4T^2N^{8-\al}.
    \end{align*}     
    Choosing $N=\left(\ve^2T\right)^{-\frac{1}{4}}$, we have that $T \ges \ve^2$.    
    If 
    $\ve^2\les T\les \ve^{-2}\de_0$,
    then \eqref{new de_0} holds 
    and we obtain that
    \begin{align*}
       \norm{r_1}_{L^\oo_tL^2_x([0, T])}
       &\les \ve^4(1+T)
       +\left(\ve^2T\right)^\frac{\al}{4}
       \\
       &\les \ve^4
       +\left(\ve^2T\right)^\frac{\al}{4}. 
    \end{align*}
    If $T\ges \ve^{-2}\de_0$,
    then a similar argument as \eqref{est:r_1 case 2} 
    yields that 
    \begin{align*}
        \norm{r_1}_{L^\oo_tL^2_x([0, T])}
        &\les 1+\ve^2T
        \les  \left(\ve^2T\right)^\frac{\al}{4}.
    \end{align*}  
     Gathering both results for $T$, we get that
    \begin{align*}
        \norm{r_1}_{L^\oo_tL^2_x([0, T])}
        \les \ve^4
        +\left(\ve^2T\right)^\frac{\al}{4}.
    \end{align*}
    
     $\bullet$ \textbf{Subcase $\text{B}_2$}: $\ve^2N^2 \ges 1, \ve^2TN^4$.
    
    In this case, \eqref{the mid-estimate of r_1} becomes
    \begin{align*}
        \norm{r_1}_{L^\oo_tL^2_x([0, T])}
        \les \ve^4(1+T)
        +\ve^2N^{2-\al}
        +\ve^4T^2N^{8-\al}.
    \end{align*}  
    Choosing $N=\left(\ve T\right)^{-\frac{1}{3}}$, we have that $T\les  \ve^2$,
    which further implies \eqref{new de_0}.
    As a result, we have that
    \[ \norm{r_1}_{L^\oo_tL^2_x([0, T])}
    \les \ve^4	(1+T)
    +\ve^2\left(\ve T \right)^{\frac{\al-2}{3}}
    \les  \ve^4. \]

    $\bullet$ \textbf{Subcase $\text{B}_3$}: $ \ve^2TN^4\ges 1,\ve^2N^2$.
    
    In this case, \eqref{the mid-estimate of r_1} becomes
    \begin{align*}
        \norm{r_1}_{L^\oo_tL^2_x([0, T])}
        \les \ve^4(1+T)
        +\ve^2TN^{4-\al}
        +\ve^4T^2N^{8-\al}.
    \end{align*}     
    Choosing $N=\left(\ve^2T\right)^{-\frac{1}{4}}$, we have that $T \ges \ve^2$.
    The subsequent proof process is similar to 
    \textbf{Subcase $\text{B}_1$},
    thus we omit the proof.
    As a consequence, we obtain that
    \begin{align*}
        \norm{r_1}_{L^\oo_tL^2_x([0, T])}
        \les \ve^4
        +\left(\ve^2T\right)^\frac{\al}{4}.
    \end{align*}
        
Assembling those results above in \textbf{Case A} and \textbf{Case B}, we get
 for all $T \ge 0 $ with $\al \in [4,8] $,
\begin{align*}
    \norm{r_1}_{L^\oo_tL^2_x([0, T])}
    \les \ve^4
    +\left(\ve^2T\right)^\frac{\al}{4}.
\end{align*}
This finishes the proof of the theorem.

\section{Optimal convergence rate}
\label{Optimal convergence rate}
In this section, we aim to get the optimal convergence rate of $r_1$.
We shall identify the accurate formula which dominates the estimate of $\phi$ first.
Then we deduce the lower bound of $\phi$ from this formula and thus obtain the lower bound control of $r_1$.
As a result, we get the optimal convergence rate of $r_1$ after choosing suitable initial data $v_0$.

Let 
\[ v_0(x)=\de_0 g(x), \]
where $g(x)\in \mathscr S (\R^d)$ is a real-valued function, and $\de_0$ is a small constant which will be chosen later. 
Recall the Duhamel formula of $\phi(\ve^{-2}t)$,
\begin{align*}
    \phi(\ve^{-2}t)=e^{i\ve^{-2}t\jd{}}{\phi_0} 
    -\int_{0}^{\ve^{-2}t}e^{i(\ve^{-2}t-s)\jd{}}\jd{-1}({G_1}+{G_2}+{G_3}+{G_R})(s) ds,
\end{align*}                         
we need to estimate each part of the Duhamel formula of $\phi(\ve^{-2}t)$ with $v_0(x)=\de_0 g(x)$ as the proof of Lemma \ref{lem:estimate of phi}.
Then we find out which part in the Duhamel formula dominates the estimate of $\phi(\ve^{-2}t)$.
Suppose that for $\de_0$ suitably small,
\begin{align*}
   t\in I:= \big\{t\in \R_+: 1 \le t \le \ve^{-2}\de_0 \big\}, 
\end{align*}
then $t$ satisfies the condition after scaling in Lemma \ref{lem:estimate of phi}.
As a result, we can reproduce the proof process of Lemma \ref{lem:estimate of phi} to obtain the estimates for each part $G_1$--$G_R$.

We are going to give the estimates of each part for $G_1$--$G_R$.
By Lemma \ref{lem:estimate of f},
we obtain that for $t\in I, \ga \ge -s_c,$
\begin{equation}
    \begin{aligned}
   \norm{\naabs{\ga}w(t)}_{X_{0}([0, t])}
   &\les \de_0\left(\nh{g}{2+\ga}
                    +t\nh{g}{4+\ga} \right),
   \\
   \norm{\naabs{\ga}(\p_t w)(t)}_{X_{0}([0, t])}
   &\les \de_0\left(\nh{g}{4+\ga}
            +t\nh{g}{6+\ga} \right).
\end{aligned}
 \label{est: f_de_0}
\end{equation}
Scaling back, \eqref{est: f_de_0} implies that for $t\in I, \ga \ge -s_c,$
\begin{equation}
    \begin{aligned}
    \norm{\naabs{\ga}\ti w(\ve^{-2}t)}_{X_{0}([0, t])}
    &\les \ve^{2+\ga-s_c}\de_0
    \left(\nh{g}{2+\ga}
    +t\nh{g}{4+\ga} \right),
    \\
    \norm{\naabs{\ga}\p_t\ti w(\ve^{-2}t)}_{X_{0}([0, t])}
    &\les \ve^{4+\ga-s_c}\de_0
    \left(\nh{g}{4+\ga}
    +t\nh{g}{6+\ga} \right).
    \end{aligned}
\label{est: ti f_de_0}
\end{equation}
By \eqref{est: ti f_de_0} and \eqref{est: {G_1}} with $\ga=-s_c$, we have for $t\in I,$
\begin{align}
    \norm{\int_{0}^{\ve^{-2}t}e^{i(\ve^{-2}t-s)\jd{}}\jd{-1}G_1(s)ds}
    _{Y_0([0, t])}
    &\les \ve^{4-s_c}\de_0  \left(\de_0^2\nh{g}{4}+t\nh{g}{6}+t^2\nh{g}{8}\right).
    \label{est: G_1_de}
\end{align}
Similarly, by \eqref{est: {G_2}} and \eqref{est:{G_3}},
we obtain for $t\in I,$
\begin{align}
    \norm{\int_{0}^{\ve^{-2}t}e^{i(\ve^{-2}t-s)\jd{}}\jd{-1}G_2(s)ds}_{Y_0([0, t])}
    &\les \ve^{{4-s_c}}\de_0^3\left(\nh{g}{4}+t\nh{g}{6}
    \right),
    \label{est: G_2_de}
\end{align}
and
\begin{align}
    &\norm{\int_{0}^{\ve^{-2}t}e^{i(\ve^{-2}t-s)\jd{}}\jd{-1}G_3(s)ds}_{Y_0([0, t])}
\les \ve^{4-s_c}\de_0^3\left(
        \nh{g}{4}
        +t\nh{g}{6}
        +t^2\nh{g}{8}
    \right).
    \label{est: G_3_de}
\end{align}
By \eqref{est:phi0} and \eqref{est: f_de_0}, we have
\begin{align}
    \nl{\jd{\frac12}{\phi_0}}{2}   
    &\les \ve^{{4-s_c}}\left(\nl{\p_t( v^{3})(0)}{2}
    +\nl{\p_t w(0)}{2}\right)
 \les 
 \ve^{{4-s_c}}\de_0 \nh{g}{4}.
    \label{est:phi0_de}
\end{align}
Combining those estimates \eqref{est:{G_R}}, \eqref{est: G_1_de}--\eqref{est:phi0_de}
and arguing similarly as Section \ref{the regular case}(note that $t\in I$ implies that \eqref{assumption of T(scaling back)} hold),  
we find that 
\begin{align}
\nl{R_1(\ve^{-2}t)}{2}
    &\les \nl{\phi(\ve^{-2}t)}{2}
\les 
    \ve^{4-s_c}\de_0\left(
    \nh{g}{4}
    +t\nh{g}{6}
    +t^2\nh{g}{8}
    \right).
    \label{est:R_1_de}
\end{align}
This in turn implies that 
\begin{align}
    &\norm{\int_{0}^{\ve^{-2}t}e^{i(\ve^{-2}t-s)\jd{}}\jd{-1}G_R(s)ds}_{Y_0([0, t])}
    \les  \ve^{4-s_c}\de_0^3\left(
    \nh{g}{4}
    +t\nh{g}{6}
    +t^2\nh{g}{8}
    \right).
    \label{est: G_R_de}
\end{align}
Comparing those estimates 
\eqref{est: G_1_de}--\eqref{est: G_R_de}, 
we infer that the dominant term of $\phi(\ve^{-2}t)$ 
is involved in the integral of $G_1$.
Consequently, to get the final sharp convergence rate of $r_1$,
a more detailed analysis about \eqref{est: G_1_de} is needed.

\subsection{The approximation formula}
In this subsection, our goal is to establish the approximated formula for the $G_1$ term.
By the definition \eqref{G_1} of $G_1$, we have
\begin{align}
   &\int_{0}^{\ve^{-2}t}e^{i(\ve^{-2}t-s)\jd{}}\jd{-1}G_1(s)ds
   \notag\\
   &\qquad
   = \int_{0}^{\ve^{-2}t}e^{i(\ve^{-2}t-s)\jd{}}\jd{-1}
    \left(
     e^{it}\p_{tt}\ti w +e^{-it}\p_{tt} \ba {\ti w}
    \right)(s)ds
    +\mathcal N_1(h),
    \label{G_1_de_0}
\end{align}
where 
\begin{align*}
   \mathcal N_1(h)
   =\frac18
   \int_{0}^{\ve^{-2}t}e^{i(\ve^{-2}t-s)\jd{}}\jd{-1}
   \left[
        e^{3it}\p_{tt} \left( h^3\right)
        + e^{-3it}\p_{tt} \left(\ba h^3\right)
   \right] (s)ds. 
\end{align*}
It follows from \eqref{est: {G_1}1} that 
\begin{align}
    \norm{\mathcal N_1(h)}_{L^\oo_tL^2_x([0, t])}
    \les \ve^{4-s_c} \de_0^3\nh{g}{4}.
    \label{est:N_1}
\end{align}
Since $\ti w $ also verifies equation \eqref{eq:f} with initial data
$\ti w_0 =\ve^2 S_\ve w_0$, invoking \eqref{p_tt ti f} and \eqref{p_ttth},
 we obtain
\begin{align*}
   \p_{tt} \ti w
   &=
   -\frac 14 \De^2 \ti w
   -\frac 18 \De^3 h
   + \mathcal O(h, \ti w ).
\end{align*}
where
\begin{align*}
\mathcal O (h, \ti w )
&=O \bigg(
\De  \left(  h^2 \ti w\right)   
+h^4\p_t h
+\p_t\left( h^2\ti  w\right)
+\De\left(h^2\De h\right)
+h^2\De (h^3) +\De( h^5)+h^7
\bigg). 
\end{align*}
Inserting the above formula of $\p_{tt}\ti w$ into \eqref{G_1_de_0}, then
\begin{align}
   \eqref{G_1_de_0}
   &= \int_{0}^{\ve^{-2}t}e^{i(\ve^{-2}t-s)\jd{}}\jd{-1}
   \left[
   e^{is}\left(
   -\frac 14 \De^2 \ti w
   -\frac 18 \De^3 h
   \right)   (s)
   +c.c.
   \right]   (s)ds
\notag
   \\
   &\quad
   +\mathcal N_1(h)
   +\mathcal N_2(h, \ti w ),
   \label{leading term}
\end{align}
where
\begin{align*}
    \mathcal N_2(h, \ti w ) =\int_{0}^{\ve^{-2}t}e^{i(\ve^{-2}t-s)\jd{}}\jd{-1} 
        \mathcal O (h, \ti w )
    (s)ds.
\end{align*}
It follows from \eqref{4.20+4.21} and \eqref{est:4.19} that 
\begin{align}
   \norm{\mathcal N_2(h, \ti w)}_{L^\oo_tL^2_x([0, t])}
   &\les 
    \ve^{4-s_c}\de_0^3\left(\nh{g}{4}
        +t\nh{g}{6}\right).
    \label{est:N_2}
\end{align}
Therefore, combining \eqref{G_1_de_0} and \eqref{leading term},
we need the approximated formula of
$\ti w $.

\subsubsection{The approximation formula of $\ti w $}
Recall the Duhamel formula of $\ti w$:
\begin{align*}
      \ti w(t)
   &=e^{-\frac{i}{2}t\De}\ti w_0
           +\frac{i}{2}\int_0^{t} e^{-\frac{i}{2}(t-s)\De}
           \left[
                \p_{ss}h
                +3\left(\frac18 |h|^4 h
                        +h^2\ba {\ti  w}
                        +2|h|^2 \ti w
                  \right)
           \right](s)ds.
\end{align*}
Since $g(x)$ is real-valued, then \eqref{eq:f} implies that $\ti w_0=-\frac 18 h_0^3$. We rewrite the Duhamel formula of $\ti w $ as follows:  
\begin{align}
    \ti w(t)
    = 
    -\frac{i}{8}\int_0^{t} e^{-\frac{i}{2}(t-s)\De}
    \De^2 h(s)ds
    +\mathcal N_3(h, \ti w),
    \label{Duhamel of ti f_de_0 }
\end{align}
where 
\begin{align*}
     \mathcal N_3(h, \ti w)
    &= -\frac 18 e^{-\frac{i}{2}t\De} h_0^3
    +\frac{i}{2}\int_0^{t} e^{-\frac{i}{2}(t-s)\De}
    O\left( h^2\De h+ \De  \left(h^3\right)+h^5
        +h^2\ti w \right)   (s)ds, 
\end{align*}
and 
\begin{align}
    \norm{\mathcal N_3(h, \ti w)}_{L^\oo_tL^2_x([0, t])}
    &\les \ve^{2-s_c}\de_0^3
              \left(
                \nh{g}{2}
                +\ve^2t\nh{g}{4}
            \right).
    \label{est:N_3}        
\end{align}
To approximate further \eqref{Duhamel of ti f_de_0 }, 
using the Duhamel formula of $h(t)$,
\begin{align}
    h(t) =e^{-\frac i 2 t\De }h_0
    +\frac {3i}2\int_{0}^{t}  e^{-\frac i2(t-s)\De }
    \big(|h|^2h\big)(s)ds,
    \label{Duhamel of h}
\end{align}
 we have the approximation formula of $\ti w $,
 \begin{align}
    \ti w(t)  
    &=  
    -\frac{i}{8} e^{-\frac{i}{2}t\De}
    t\De^2 h_0
    +\mathcal N_3(h, \ti w)
    +\mathcal N_4(h).
    \label{Duhamel of ti f:expansion}
\end{align}
where 
\begin{align*}
    \mathcal N_4(h)
    = \frac{3}{16}\int_0^{t} e^{-\frac{i}{2}(t-s)\De}
    \left(
    \int_{0}^{s}  e^{-\frac i2(s-s_1)\De }
    \De^2 \big(|h|^2h\big)(s_1)ds_1
    \right)ds,
\end{align*}
and 
\begin{align}
    \norm{\mathcal N_4(h)}_{L^\oo_tL^2_x([0, t])}
    &\les \ve^{4-s_c}\de_0^3 t
    \nh{g}{4}.
    \label{est:N_4}
\end{align}

\subsubsection{The approximation formula of the $G_1$ part}

Inserting \eqref{Duhamel of h} and \eqref{Duhamel of ti f:expansion} to \eqref{leading term},
we have the following approximation formula: 
\begin{align}
   \int_{0}^{\ve^{-2}t}e^{i(\ve^{-2}t-s)\jd{}}\jd{-1}G_1(s)ds
  &=:
    I_1(t)+I_2(t)
    +\mathcal N_1(h)
    +\mathcal N_2(h, \ti w )
    +\mathcal N(h, \ti w ),
    \label{leading term expansion}
\end{align}
where
\begin{align*}
   I_1(t)
   &=e^{i\ve^{-2}t\jd{}}
   \int_{0}^{\ve^{-2}t}e^{is\left(-\jd{}+1-\frac \De 2\right)}\jd{-1}
   \left(
   -\frac{1}{8}\De^3 h_0
   +\frac i{32} s\De^4 h_0
   \right) ds,
   \\
   I_2(t)
   &=e^{i\ve^{-2}t\jd{}}
   \int_{0}^{\ve^{-2}t}e^{-is\left(\jd{}+1-\frac \De 2\right)}\jd{-1}
   \left(
   -\frac{1}{8}\De^3  h_0
   -\frac i{32} s\De^4 h_0
   \right)
   ds,
   \\ 
   \mathcal N(h, \ti w )
   &= -\frac 14
    \int_{0}^{\ve^{-2}t}e^{i(\ve^{-2}t-s)\jd{}}\jd{-1}
   \big[
    e^{is}\De^2 \left( \mathcal N_3(h, \ti w )
            +\mathcal N_4(h)\right)
    +c.c.
    \big](s)ds
    \\
    &\quad
    -\frac{3}{16}
    \int_{0}^{\ve^{-2}t}e^{i(\ve^{-2}t-s)\jd{}}\jd{-1}
    \bigg[
    i e^{is}\De^3 \int_{0}^{s}  e^{-\frac i2(s-s_1)\De }
    \big(|h|^2h\big)(s_1)ds_1
    +c.c
     \bigg]
    ds.
    \end{align*}
Thanks to \eqref{est:N_3} and \eqref{est:N_4}, we get that 
\begin{align}
   \norm{\mathcal N(h, \ti w )}_{L^\oo_tL^2_x([0, t])}
   &\les 
    {\ve^{-2}t}
    \left(
        \norm{\De^2\mathcal N_3(h, \ti w)}_{L^\oo_tL^2_x([0, \ve^{-2}t])}
        +\norm{\De^2\mathcal N_4(h)}_{L^\oo_tL^2_x([0, \ve^{-2}t])}
        \right)
        \notag\\
        &\quad
    +\ve^{-2}t
     \norm{\De^3 \int_{0}^{s}  e^{\mp\frac i2(s-s_1)\De }
         \big(|h|^2 \ba h\big)(s_1)ds_1}_{L^\oo_sL^2_x([0, \ve^{-2}t])}        
  \notag\\
  &\les 
     \ve^{4-s_c}\de_0^3 \left(
        t\nh{g}{6}
       +t^2\nh{g}{8}
     \right).
     \label{est:N}
\end{align}
Gathering \eqref{est:N_1}, \eqref{est:N_2}, and \eqref{est:N}, we have
\begin{align}
     \norm{\mathcal N_1(h)
    +\mathcal N_2(h, \ti w )
    +\mathcal N(h, \ti w )}_{L^\oo_tL^2_x([0, t])}
     \les 
     \ve^{4-s_c}\de_0^3 \left(
     \nh{g}{4}
       + t\nh{g}{6}
       +t^2\nh{g}{8}
     \right).
     \label{G_1:expansion}
\end{align}

\subsection{Lower bound control of $r_1(t)$}
Now that the approximation formula \eqref{G_1:expansion} is established,
 we can give a lower bound control of $r_1(t)$ as follows.
\begin{lem}\label{lem:est of phi_de}
    Let $g\in \mathscr S\left(\R^d\right)$ be a real-valued function,
    then there exist constants $C_0, C_1, C_2, C$ such that
    for $t\in I$,     
    we have
    \begin{align*}
        \nl{r_1(t))}{2}
        &\ge
        C_1 \ve^{4}\de_0t^2 \nh{g}{8}
        + C_2 \ve^{6}\de_0t^3 \nh{g}{12}
        -C_0 \ve^{4}\de_0t \nh{g}{6}
        \notag\\
        &\quad
        -C\ve^{4}\de_0 
        -C\ve^{4}\de_0^3
        \left(
        1
        +t\nh{g}{6}
        +t^2\nh{g}{8}
        \right)
        \notag\\
        &\quad 
        - C\ve^{6}\de_0
        \left(
        \nh{g}{6}
        +t\nh{g}{8}
        +t^2\nh{g}{10}
        \right)
        \notag\\
        &\quad 
        -C\ve^{8}\de_0
        \left(
        \nh{g}{8}
        +t^2\nh{g}{12}
        +t^3\nh{g}{14}
        \right),
    \end{align*}
    where positive constants $C_0, C_1, C_2, C$ depend on $\nh{g}{4}$ and are independent of $\ve, \de_0, t$. 
\end{lem}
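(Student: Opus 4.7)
The plan is to work with the rescaled quantity $R_1=\im\phi$ and start from the Duhamel formula for $\phi$ at time $T=\ve^{-2}t$, writing $r_1(t)=\ve^{s_c}(\im\phi)(T)$ and inserting the approximation formula \eqref{leading term expansion} for the $G_1$-integral. The lower bound then follows from a reverse triangle inequality: the positive contribution comes from $\im I_1$, while the free-evolution piece $e^{iT\jd{}}\phi_0$, the integrals of $G_2,G_3,G_R$, the remainders $\mathcal N_1,\mathcal N_2,\mathcal N$, the term $I_2$, and the Taylor remainder in $I_1$ are all absorbed into error. Explicitly, $\phi_0$ contributes $C\ve^4\de_0$ via \eqref{est:phi0_de}; the $G_2,G_3,G_R$ integrals together with the $\mathcal N_j$'s give the $C\ve^4\de_0^3(\cdots)$ block via \eqref{est: G_2_de}--\eqref{est: G_R_de} and \eqref{G_1:expansion}; for $I_2$, whose phase operator $L_2=\jd{}+1-\De/2\ge 2$ is uniformly bounded away from zero, two integrations by parts in $s$ yield $\|I_2\|_{L^2}\les \|\De^3 h_0\|_{L^2}+T\|\De^4 h_0\|_{L^2}$ and hence the $C\ve^6\de_0(\nh{g}{6}+t\nh{g}{8})$ error after rescaling $T=\ve^{-2}t$ and using $\|\De^k h_0\|_{L^2}\sim \ve^{2k-s_c}\de_0\nh{g}{2k}$; finally, a Mihlin--H\"ormander control of the higher-order Taylor remainder of the phase $L_1=-\jd{}+1-\De/2$ produces the $C\ve^8\de_0(\cdots)$ piece.

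The core of the proof is extracting the positive lower bound from $\im I_1$. Since $L_1$ has symbol $|\xi|^4/8+O(|\xi|^6)$ on the low-frequency Fourier support of $h_0$, the Taylor expansion $e^{isL_1}=1+\tfrac{i}{8}s\De^2-\tfrac{1}{128}s^2\De^4+\mathcal O(s\De^3)$, applied to the integrand $-\tfrac{1}{8}\De^3 h_0+\tfrac{is}{32}\De^4 h_0$ and integrated in $s\in[0,T]$, yields
\[
I_1\approx e^{iT\jd{}}\jd{-1}\Bigl[-\tfrac{T}{8}\De^3 h_0+\tfrac{iT^2}{64}\De^4 h_0-\tfrac{iT^2}{128}\De^5 h_0-\tfrac{T^3}{768}\De^6 h_0\Bigr].
\]
Writing the bracket as $A+iB$ with real $A,B$, and using that $h_0$ (hence $\De^k h_0$) is real and $\jd{-1}$ preserves reality, one obtains $\im I_1\approx \sin(T\jd{})\jd{-1}A+\cos(T\jd{})\jd{-1}B$. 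Plancherel then gives
\[
\|\im I_1\|_{L^2}^2=\tfrac12\|A\|_{L^2}^2+\tfrac12\|B\|_{L^2}^2+\tfrac12\!\int\!\cos(2T\jb{\xi})(|\hat B|^2-|\hat A|^2)\,d\xi+\!\int\!\sin(2T\jb{\xi})\re(\hat A\overline{\hat B})\,d\xi,
\]
modulo the harmless $\jd{-1}$ factor on low frequencies. The diagonal pieces deliver the $T^3\|\De^6 h_0\|_{L^2}$ and $T^2\|\De^4 h_0\|_{L^2}$ contributions, which after multiplying by $\ve^{s_c}$ become the positive $C_1\ve^4\de_0 t^2\nh{g}{8}$ and $C_2\ve^6\de_0 t^3\nh{g}{12}$ of the lemma; the $T\|\De^3 h_0\|_{L^2}$ piece likewise produces the subtracted $C_0\ve^4\de_0 t\nh{g}{6}$ term, and the $T^2\|\De^5 h_0\|_{L^2}$ piece produces the $t^2\nh{g}{10}$ part of the $\ve^6\de_0(\cdots)$ error.

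The main obstacle is the uniform control of the two oscillating integrals in the Plancherel identity for all $t\in[1,\ve^{-2}\de_0]$, so that they do not destroy the positive diagonal contribution. After the substitution $\xi=\ve\eta$, the phase $2T\sqrt{1+\ve^2|\eta|^2}$ has $\eta$-gradient of size $T\ve^2|\eta|=t|\eta|$, so the hypothesis $t\ge 1$ makes integration by parts in $\eta$ effective; combined with the Schwartz factor $|\hat g(\eta)|^2|\eta|^{2k}$ (which vanishes at $|\eta|=0$) this produces a $t^{-1}$-gain making these oscillating integrals genuinely smaller than the diagonal $\|A\|^2,\|B\|^2$. The $\re(\hat A\overline{\hat B})$ cross term is treated analogously, and careful bookkeeping of all oscillating contributions ensures they fit inside the stated $\ve^6\de_0 t^2\nh{g}{10}$ and $\ve^8\de_0(\cdots)$ error budgets.
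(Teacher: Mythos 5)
Your overall architecture (isolate the $G_1$ integral via \eqref{leading term expansion}, absorb $\phi_0$, the $G_2$, $G_3$, $G_R$ integrals, the $\mathcal N_j$'s, $I_2$ and the Taylor remainder of the phase into the error budget) matches the paper, and your bookkeeping for those pieces is consistent with \eqref{est: G_2_de}--\eqref{est: G_R_de}, \eqref{G_1:expansion} and \eqref{est:I_2}. The genuine gap is in how you extract the positive main term. You attempt to lower-bound $\nl{\im I_1}{2}$ directly by writing $\im I_1\approx\sin(T\jd{})A+\cos(T\jd{})B$ and expanding by Plancherel; this leaves the oscillating integrals $\int\cos(2T\jb{\xi})\big(|\widehat B|^2-|\widehat A|^2\big)d\xi$ and $\int\sin(2T\jb{\xi})\re\big(\widehat A\,\overline{\widehat B}\big)d\xi$, which you propose to beat by integration by parts using $t\ge1$. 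This does not close. One integration by parts gains only a factor $t^{-1}$ (the powers of $\ve$ cancel between $T^{-1}|\xi|^{-1}$ and the $\ve^{-1}$ lost when the derivative hits $\widehat{h_0}(\xi)=c\,\widehat g(\xi/\ve)$), which is no gain at all on the range $t\in[1,C]$ where the lemma must still hold; and the cross term is a priori as large as $\nl{A}{2}\nl{B}{2}$, i.e.\ exactly enough to cancel the diagonal. Indeed, since $\widehat A$ and $\widehat B$ are real radial multipliers applied to $\widehat{h_0}$, one has pointwise $\sin(T\jb{\xi})\widehat A+\cos(T\jb{\xi})\widehat B=\big(\widehat A^2+\widehat B^2\big)^{1/2}\sin\big(T\jb{\xi}+\varphi(\xi)\big)$, and for a general real Schwartz $g$ (e.g.\ with $\widehat g$ concentrated on a thin annulus) there are times $t\sim1$ at which this factor is uniformly small on the support of $\widehat{h_0}$. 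So the claimed bound $\nl{\im I_1}{2}\gtrsim\big(\nl{A}{2}^2+\nl{B}{2}^2\big)^{1/2}$ is false in general, and no bookkeeping of the oscillatory pieces will rescue it.

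The paper avoids this in two moves that your proposal is missing. First, it lower-bounds the full complex quantity $\nl{\phi(\ve^{-2}t)}{2}$ rather than its imaginary part: by unitarity of $e^{i\ve^{-2}t\jd{}}$ and the exact $L^2$-orthogonality of real and imaginary parts, $\nl{e^{i\ve^{-2}t\jd{}}(A+iB)}{2}^2=\nl{A}{2}^2+\nl{B}{2}^2$ with no cross or oscillatory terms at all (this is \eqref{est:I_1-1}--\eqref{est:I_1}, and is where the reality of $g$ is actually used). Second, it transfers the bound to $R_1=\im\phi$ by a dynamical argument (Step 2 of the paper's proof): if $\nl{\im\phi}{2}$ were small throughout a rescaled time interval of length one, then $\nl{\re\phi}{2}$ would be large there, hence $\p_tR_1=\re\jd{}\phi$ would be large, and the mean value theorem forces $R_1$ to be large at the endpoint, a contradiction. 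You need some substitute for this second step; without it, a lower bound on $\nl{\phi}{2}$ (or a formal identification of the main term of $I_1$) does not yield the stated lower bound on $\nl{r_1(t)}{2}=\ve^{s_c}\nl{\im\phi(\ve^{-2}t)}{2}$.
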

\begin{proof}
    We divide the proof into two steps.
     
Step 1: our task is to establish the estimates of $I_1, I_2$
and the lower bound control of $\phi(\ve^{-2}t)$.

$\bullet$ Estimation of $I_1(t)$.
Denote the operator 
\[ T_t 
=\frac{e^{it\left(-\jd{}+1-\frac \De 2\right)}-1-\frac18 it\De^2}
    {t\De^3}. \]
By the Mihlin-H\"ormander Multiplier Theorem, the operator
$T_t: L^p_x\to L^p_x, p\in (1, \oo)$ is  bounded uniformly in $t$.
Moreover, 
\begin{align*}
   \nl{T_t \varphi(x)}{p} 
   \les \nl{\varphi(x)}{p},
   \quad 
   p\in (1, \oo),
\end{align*}
where the implicit constant is independent of $t$ and $\varphi(x)$.
Due to the definition of $T_t$, we write 
\begin{align*}
   e^{it\left(-\jd{}+1-\frac \De 2\right)}
    =1+\frac18 it\De^2 
    +t\De^3 T_t.
\end{align*}
In view of this formula, we obtain that 
\begin{subequations}
    \label{I_1}
    \begin{align}
        I_1(t) 
        &=
        \ve^4\de_0 t  
        e^{i\ve^{-2}t\jd{}}
        \left(
        -\frac{1}{8}S_\ve (\De^3 g)
        +\frac i{64} tS_\ve (\De^4 g)
        \right)
        -\frac{1}{768}\ve^6\de_0t^3
        e^{i\ve^{-2}t\jd{}}
         S_\ve  \left( \De^6 g \right)
        \label{I_1-1}
        \\
        &\quad
        -\frac{i}{128}\ve^6\de_0t^2
        e^{i\ve^{-2}t\jd{}}
           \jd{-1} S_\ve \left( \De^5 g\right) 
        -\frac{1}{768}\ve^6\de_0t^3
        e^{i\ve^{-2}t\jd{}}
        \big(\jd{-1}-1\big) S_\ve  \left( \De^6 g \right)     
          \label{I_1-2}\\
        &\quad 
        +
        \ve^4\de_0 t  
        e^{i\ve^{-2}t\jd{}}
        \left(\jd{-1}-1\right)
        \left(
        -\frac{1}{8}S_\ve (\De^3 g)
        +\frac i{64} tS_\ve (\De^4 g)
        \right)
        \label{I_1-error-1}\\
        &\quad
        +e^{i\ve^{-2}t\jd{}}
        \int_{0}^{\ve^{-2}t}
        s\De^3T_s \jd{-1}
        \left(
        -\frac{1}{8}\De^3 h_0
        +\frac i{32} s\De^4 h_0
        \right) ds.
        \label{I_1-error-2}
    \end{align}
\end{subequations}  
First, for \eqref{I_1-1}, since $g$ is real-valued,
we have for $t\in I,$
\begin{align*}
    \nl{\eqref{I_1-1}}{2}^2
    &= \nl{
        -\frac 18 \ve^4 \de_0 t S_\ve (\De ^3 g)
        -\frac 1{768} \ve^6 \de_0 t^3 S_\ve  (\De^6 g)
    }{2}^2
        +\left( \frac{1}{64} \ve^{4-s_c}\de_0t^2 \nh{g}{8}\right)^2.
\end{align*}
This implies that
\begin{align}
    \nl{\eqref{I_1-1}}{2}
    &\ge 
       \frac 12 
            \nl{
                -\frac 18 \ve^4 \de_0 t S_\ve (\De ^3 g)
                -\frac 1{768} \ve^6 \de_0 t^3 S_\ve  (\De^6 g)
            }{2} 
       + \frac{1}{128} \ve^{4-s_c}\de_0t^2 \nh{g}{8}
      \notag\\
    &\ge
      -\frac1{16} \ve^{4-s_c}\de_0t \nh{g}{6} 
    +  \frac{1}{128} \ve^{4-s_c}\de_0t^2 \nh{g}{8}
    + \frac{1}{1536}\ve^{6-s_c}\de_0t^3\nh{g}{12}.  
           \label{est:I_1-1}
\end{align}
Note that
\[ \jd{-1}-1 =\frac{\De }{\jd{}\left(1+\jd{}\right)}, \]    
then we obtain for $t\in I,$
\begin{align}
    \nl{\eqref{I_1-2}}{2}
    &\les  
    \ve^{6-s_c}\de_0 t^2\nh{g}{10}
    +\ve^{8-s_c}\de_0 t^3 \nh{g}{14},
    \label{est:I_1-2}
\end{align}
and
\begin{align}
    \nl{\eqref{I_1-error-1}}{2}
   &\les 
        \ve^{6-s_c}\de_0t \nh{g}{8}
        +\ve^{6-s_c}\de_0 t^2\nh{g}{10}.       
        \label{est:I_1-error-1}
\end{align}
By the $L^p_x$ uniform boundedness of $T_t$,
we obtain that
\begin{align}
   \nl{\eqref{I_1-error-2}}{2}
   &\les 
       \int_{0}^{\ve^{-2}t}
       s  \nl{\De^3T_s \jd{-1}\De^3 h_0}{2}ds
      + \int_{0}^{\ve^{-2}t}
        s^2
      \nl{\De^3T_s \jd{-1} \De^4 h_0 }{2}ds
     \notag \\
  &\les 
       \ve^{8-s_c}\de_0 
       \left(t^2
       \nh{g}{12}
       +t^3\nh{g}{14}
       \right).
       \label{est:I_1-error-2}
\end{align} 
By \eqref{I_1}--\eqref{est:I_1-error-2}, we have
\begin{align}
   \nl{I_1}{2}
   &\ge  -\frac1{16} \ve^{4-s_c}\de_0t \nh{g}{6} 
       +  \frac{1}{128} \ve^{4-s_c}\de_0t^2 \nh{g}{8}
       + \frac{1}{1536}\ve^{6-s_c}\de_0t^3
       \nh{g}{12}
       \notag\\
       &\quad
       - C\ve^{6-s_c}\de_0
       \left(
       t\nh{g}{8}
       +t^2\nh{g}{10}
       \right)
       -C\ve^{8-s_c}\de_0
       \left(
       t^2\nh{g}{12}
       +t^3\nh{g}{14}
       \right).
             \label{est:I_1}  
\end{align}
This finishes the estimate of $I_1(t)$.

$\bullet$ Estimation of $I_2(t)$.
Integration by parts gives that
\begin{align*}
   I_2(t)
   &= e^{i\ve^{-2}t\jd{}}
   \int_{0}^{\ve^{-2}t}
   \frac{\jd{-1}}{-i\left(\jd{}+1-\frac \De2\right)}
    \left(
    -\frac{1}{8}\De^3  h_0
    -\frac i{32} s\De^4 h_0
    \right)
    de^{-is\left(\jd{}+1-\frac \De 2\right)}
    \\
   &=
   \frac{e^{-i\ve^{-2}t (1-\frac{\De}{2})}
       -e^{i\ve^{-2}t\jd{}}}
       {-i\left( \jd{}+1-\frac \De 2\right)}
       \jd{-1}
     \bigg( -\frac{1}{8}\De^3  h_0
            -\frac{1}{32(\jd{}+1-\frac \De 2)}\De^4 h_0
        \bigg)
        \\
     &\quad
     +\frac{e^{-i\ve^{-2}t (1-\frac{\De}{2})}}{32(\jd{}+1-\frac \De 2)} \jd{-1}    \ve^{-2}t\De^4 h_0.
\end{align*}
This implies that 
\begin{align}
   \nl{I_2(t)}{2}
   &\les  
    \ve^{6-s_c}\de_0 \nh{g}{6}
    +\ve^{6-s_c}\de_0 t \nh{g}{8}
    +\ve^{8-s_c}\de_0 \nh{g}{8},
    \label{est:I_2}  
\end{align} 
which finishes the estimate of $I_2(t)$.

$\bullet$ The lower bound control of $\phi(\ve^{-2}t)$.
Gathering those estimates \eqref{est: G_2_de},
\eqref{est: G_3_de}, \eqref{est:phi0_de}, and \eqref{est: G_R_de}, we have for $t\in I$,
\begin{align*}
    &\nl{\jd{\frac12}{\phi_0}}{2} 
    +\norm{ \int_{0}^{\ve^{-2}t}e^{i(\ve^{-2}t-s)\jd{}}\jd{-1}
       ( G_2+G_3+G_R)(s)ds }_{L^\oo_tL^2_x([0, t])}
       \\
      &\quad
      \le
   C\ve^{4-s_c}\de_0 \nh{g}{4}
   +C\ve^{4-s_c}\de_0^3
   \left(
   \nh{g}{4}
   +t\nh{g}{6}
   +t^2\nh{g}{8}
   \right). 
\end{align*}
Substituting the above estimate, \eqref{leading term expansion}, \eqref{G_1:expansion}, \eqref{est:I_1}, and \eqref{est:I_2} into the Duhamel formula of $\phi(\ve^{-2t})$,
we have for any $t\in I$,
\begin{align}
   \nl{\phi(\ve^{-2}t)}{2}
   &\ge
   \frac{1}{128} \ve^{4-s_c}\de_0t^2 \nh{g}{8}
   + \frac{1}{1536}\ve^{6-s_c}\de_0t^3\nh{g}{12} 
   -\frac1{16} \ve^{4-s_c}\de_0t \nh{g}{6}
   \notag\\
   &\quad
   -C\ve^{4-s_c}\de_0 \nh{g}{4}
   -C\ve^{4-s_c}\de_0^3
   \left(
   \nh{g}{4}
   +t\nh{g}{6}
   +t^2\nh{g}{8}
   \right)
   \notag\\
   &\quad 
   - C\ve^{6-s_c}\de_0
   \left(
   \nh{g}{6}
   +t\nh{g}{8}
   +t^2\nh{g}{10}
   \right)
   \notag\\
   &\quad 
   -C\ve^{8-s_c}\de_0
   \left(
   \nh{g}{8}
   +t^2\nh{g}{12}
   +t^3\nh{g}{14}
   \right).
   \label{phiphi} 
\end{align}
Thus we accomplish the first step.

Step 2: we need to show that $R_1(\ve^{-2}t), t\in I$ satisfies a similar lower $L^2_x$-bound as in \eqref{phiphi}.
For convenience, we denote the right-hand side of \eqref{phiphi} by $A(t)$. 

Firstly, we claim that if for any $t\in I,$
\begin{align}
    \nl{\phi(\ve^{-2}t)}{2}  \ge A(t),
     \label{phi-0}
\end{align} 
then there exists some $s\in [t, t+\ve^2]\subset I$ such that
\begin{align}
    \nl{R_1(\ve^{-2}s) }{2} \ge \frac 18 A(s).
    \label{phi-0-0}
\end{align} 
If $A(s)\le 0$, we are done.
For $A(s) > 0$,
we argue by contradiction.
Suppose that for any $s\in [t, t+\ve^2],$
we have
\begin{align}
    \nl{R_1(\ve^{-2}s)}{2}
    &< \frac 18 A(s).
    \label{phi-3}
\end{align}
Then \eqref{R1, phi} and \eqref{phi-3} imply that for any $s\in [t, t+\ve^2],$ 
\begin{align}
    \nl{\im \phi(\ve^{-2}s)}{2}
    &< \frac 18 A(s).
    \label{phi-1}
\end{align}
This together with \eqref{phi-0} gives that for any $s\in [t, t+\ve^2],$
\begin{align}
    \nl{\re \phi(\ve^{-2}s)}{2}
    &> \frac 78 A(s).
    \label{phi-2}
\end{align}
By the Mean Value Theorem, we have that 
\begin{align*}
    R_1(\ve^{-2}t +1)
    =R_1 (\ve^{-2}t)
    + (\p_t R_1) (\ve^{-2}\ti t),
\end{align*}
where $\ti t \in [t, t+\ve^2]$.
In view of this, \eqref{R1, phi}, \eqref{phi-1}, and \eqref{phi-2},
we obtain
\begin{align*}
    \nl{ R_1(\ve^{-2}t +1)}{2}
    &\ge 
    \nl{(\p_t R_1) (\ve^{-2}\ti t)}{2}
    -\nl{R_1 (\ve^{-2}t)}{2}\\ 
    &> 
    \frac 78 A(\ti t)
    -\frac 18 A(t)
    \\
    &>
    \frac 34 A(t+\ve^2)
        -O(\ve^2)A'(t+\ve^2)
        -O(\ve^4)A''(t+\ve^2)
        -O(\ve^6)A'''(t+\ve^2).
\end{align*}
Therefore, for $\ve $ sufficiently small, we get
\begin{align*}
   \nl{ R_1(\ve^{-2}(t +\ve^2))}{2}
   = 
   \nl{ R_1(\ve^{-2}t +1)}{2}
   &\ge \frac 18 A(t+\ve^2).
\end{align*}
Thus \eqref{phi-0-0} holds for $s=t+\ve^2$, which contradicts with \eqref{phi-3}. 
Hence, the first claim is proved.

Secondly, we prove that \eqref{phi-0-0} holds for all $t\in  I$, based on \eqref{phi-0} for all $t\in I$.
For any $t \in [1+\ve^2, \ve^{-2}\de_0 ]$, 
 we have the interval $[t-\ve^2, t]\subset I$ where \eqref{phi-0} holds.
Then by the first claim, we have for any $t \in [1+\ve^2, \ve^{-2}\de_0 ]$, 
\begin{align*}
   \nl{R_1(\ve^{-2}t)}{2}
   &\ge A(t).
\end{align*}
Let $\ve$ tend to 0, then we finish Step 2.                   

The desired conclusion in lemma is a consequence of Step 1 and Step 2.
\end{proof}

\subsection{The proof of Theorem \ref{optimal rate}}
The proof of Theorem \ref{optimal rate} is divided into two cases:
the regular case(smooth initial data) and the non-regular case($H^\al_x$-data).
\subsubsection{Regular case}
Let $v_0=\de_0 g(x)$ and  
\begin{align*}
   g(x):=A_0^{\frac d2}e^{-A_0^2|x|^2} \in \mathscr{S}(\R^d), 
\end{align*}
where $A_0$ is a suitable large constant and will be chosen later.
Then we have for any $\ga \ge 0,$
\begin{align*}
   \nh{g}{\ga}=C A_0^\ga,  
\end{align*}
where the constant $C$ is independent of $A_0$. 
Due to Lemma \ref{lem:est of phi_de},
we obtain that for any $t\in  I$,
   \begin{align*}
       \nl{r_1(x)}{2}
       &\ge 
       A_0^8\ve^4 t^2\de_0
       \big[
        C_1
        +C_2A_0^4\ve^2t-C_0A_0^{-2}t^{-1}
        -\ti CA_0^{-4}t^{-2}
         -\ti C \de_0^2 (A_0^{-4}t^{-2}
                +A_0^{-2}t^{-1}+1)
        \\
   &\quad -\ti C \ve^2 (A_0^{-2}t^{-2}+t^{-1}+A_0^2)
        -\ti C \ve^4(t^{-2}+A_0^4+A_0^6t)        
       \big]
   \end{align*}
   where the constants $C_0, C_1, C_2$, and $\ti C$ are independent of $t$, $\de_0$, and $\ve$.  
Choosing $\ve $ sufficiently small, $\de_0 $ suitably small, $A_0$ suitably large,
we have for any $t\in  I, $
\begin{align*}
    \nl{r_1(t)}{2}
    &\ge  \frac 12 C_1 A_0^8\ve^{4}\de_0 t^2.
    \label{phi1}
\end{align*}
This finishes the proof of the regular case.

\subsubsection{Non-regular case}
In this subsection, we address the non-regular case.
Let $v_0=\de_0 g(x)$ and
\begin{align*}
   g(x)=\F^{-1} \Big(
        \jb{\xi}_2^{-\al-\frac d2}\left(\ln \jb{\xi}_2\right)^{-1}
   \Big), 
\end{align*}
where $\jb{\cdot}_2 =\sqrt{|\cdot|^2+2}$.
This implies that $g\in H^\al_x(\R^d)$.
Moreover, we have the following estimates:
\begin{equation}
    \begin{aligned}
       \norm{g}_{H^\al_x}
       &\sim 1,  
       \\
       \norm{P_{\le N }g}_{H^\ga_x}
       &\sim N^{ \ga-\al}\left(\ln N\right)^{-1},
       \quad \ga>\al,
       \\
       \norm{P_{> N }g}_{H^\ga_x}
       &\les N^{\ga-\al }\left(\ln N\right)^{-1},
       \quad 
       \ga < \al.
    \end{aligned}
    \label{value of H al}
\end{equation}
Recall the definition of $r_1$, we denote
\begin{align*}
   r_1 = \left(u^\ve - {\Phi_1}_N-\ve^2{\Phi_2}_N\right)
            -\left(\Phi_1- {\Phi_1}_N+ \ve^2\Phi_2-\ve^2{ \Phi_2}_N\right)
       =:{r_1}_N -{r^N_1},
\end{align*}
where ${\Phi_1}_N$ and ${\Phi_2}_N$ are given in \eqref{w1,w2}.
Arguing similarly as in Section \ref{the nonregular case},
 we establish
 \begin{align}
    \nl{{r_1^N}}{2}
    &\les  \de_0\left(
        N^{-\al}
        +\ve^2N^{2-\al}
        +\ve^2tN^{4-\al}
    \right)(\ln N)^{-1}.
    \label{al 3}
 \end{align}
To estimate $\nl{{r_1}_N}{2}$, 
we imitate the proof of Lemma \ref{lem:est of phi_de} and obtain a similar conclusion with $g$ replaced by $P_{\le N}g$ for $g\in H^\al$, under the condition given in \eqref{assumption of T(scaling back)} that for small $\de_1$,   $t$ satisfies 
\begin{align} 
\label{de_0-1}
    \ve^2\de_0tN^{s_c}(\ln N)^{-1}     
     \les \de_1 
     \mbox{ when }
     \al \in [4, 4+s_c),
    \mbox{ or }
    \ve^2\de_0 t(\ln N)^{-1}     
     \les \de_1
      \mbox{ when }
    \al \in [4+s_c, 8].
\end{align}
Then applying the result of Lemma \ref{lem:est of phi_de} with $g$ replaced by $P_{\le N}g$, \eqref{value of H al}, and \eqref{al 3}, 
we have for any $t\in  I$ and $g\in H^\al$ with $\al\in[4,8)$,
 \begin{align}
     \nl{{r_1}}{2}
     &\ge \ve^4t^2N^{8-\al}\de_0 
            (\ln N)^{-1}
            \big[
                C_1+C_2\ve^2tN^4
                -C_0t^{-1}N^{\al -8}\eta_\al (N)
                -Ct^{-2}N^{\al -8}\ln N
            \notag \\
          &\quad
          -C\de_0^2(t^{-2}N^{\al -8}\ln N
          +t^{-1}\eta_\al (N) N^{\al-8}+1)        
          -C\ve^2(\eta_\al (N) N^{\al-8}t^{-2}+t^{-1}+N^2)              
             \notag  \\
          &\quad
          -C\ve^4(t^{-2}+N^4+tN^6)
          -C\ve^{-4}t^{-2}
            (N^{-8} +\ve^2N^{-6}+\ve^2tN^{-4})
            \big],
          \label{al 1'}
 \end{align}
 where $\eta_\al (N)= N^{6-\al}$ for $\al \in [4,6)$, and $\ln N$ for $\al \in [6,8)$.
 And for any $t\in  I$ and $g\in H^8$, we have 
 \begin{align}
     \nl{{r_1}}{2}
     &\ge \ve^4t^2\de_0
     \big[
        C_1+C_2\ve^2tN^4 (\ln N)^{-1}-C_0t^{-1}
        -Ct^{-2}-C\de_0^2t^{-2}(1+t+t^2)
       \notag  \\
    &\quad
        -C\ve^2t^{-2}(1+t+t^2N^2(\ln N)^{-1})
        -C\ve^4t^{-2}(1+t^2N^4(\ln N)^{-1}
            +t^3N^6(\ln N)^{-1}) 
       \notag  \\
    &\quad
        -C\ve^{-4}t^{-2}
            (N^{-8} +\ve^2N^{-6}+\ve^2tN^{-4})
            (\ln N)^{-1}
     \big]
     \label{al 2}
 \end{align}

We divide the range of $\al$ into the intervals $[4, 6)$ and $[6,8]$.
 
We consider $ \al \in [4, 6)$ and $t\in  I$ first.
 Since $t \in  I$ implies $t\ge 1$,
 it suffices to consider the case where $(\ve, N, t)$ satisfies \textbf{Subcase $\text{A}_2$} in Subsection \ref{proof of main theorem}, namely
 \begin{align*}
     1\ges \ve^2N^2, \ve^2tN^4 \text{, and } tN^2\ges 1, 
 \end{align*}
 or \textbf{Subcase $\text{A}_6$}:
 \begin{align*}
    \ve^2 tN^4 \ges 1, \ve^2N^2 \text {, and } tN^2\ges 1. 
 \end{align*}
Here we only prove the first case as the second one is similar.
Suppose that the first case holds, 
       then \eqref{al 1'} yields that 
        \begin{align*}
            \nl{r_1}{2}
            &\ge
            \ve^4t^2N^{8-\al}\de_0 
            (\ln N)^{-1}
            \big[
                C_1+C_2\ve^2tN^4
                -C_0t^{-1}N^{-2}
                -Ct^{-2}N^{\al -8}\ln N
                 \\
          &\quad
          -C\de_0^2(t^{-2}N^{\al -8}\ln N
          +t^{-1}N^{-2}+1)        
          -C\ve^2(N^{-2}t^{-2}+t^{-1}+N^2)              
               \\
          &\quad
          -C\ve^4(t^{-2}+N^4+tN^6)
          -C\ve^{-4}t^{-2}N^{-8}
            \big].
        \end{align*}
        Choosing(for both cases) $$N= A_0\left(\ve^2t\right)^{-\frac 14},$$
        where $A_0$ is a suitable large constant, then $t\in I $ implies that $t$ satisfies \eqref{de_0-1}.
        The above inequality for $r_1$ becomes
        \begin{align*}
            \nl{r_1}{2}
           &\ge A_0^{8-\al}\de_0
           \left(\ve^2 t\right)^{\frac \al 4}
           \big|
             \ln \big(A_0^{-4}  \ve^2t \big)
             \big|^{-1}  
           \bigg\{
                   \left(
              \ti C_1 
             + \ti C_2 A_0^{4}
             \right) 
            -\ti C_0A_0^{-2}(\ve t^{-\frac 12})
              \\&
            \qquad
            -\ti C(1+\de_0^2)
                 \big|
                     \ln \big(A_0^{-4}  \ve^2t \big)
                 \big|
             A_0^{\al -8}
            \ve^{\frac{8-\al}{2}}t^{-\frac \al 4}   
            -\ti C \de_0^2 
             \left(
             A_0^{-2}\ve t^{-\frac 12}
             +1
             \right)
            \\&
            \qquad
            -\ti C
             \left(
             A_0^{-2}\ve^3t^{-\frac 32}
             +\ve^2t^{-1}
             +A_0^{2} \ve t^{-\frac 12}
             \right)
             \\
             &\qquad
             -\ti C 
                \left(
                    \ve^4t^{-2}
                    +A_0^{4}\ve^2t^{-1}
                    +A_0^{6}\ve t^{-\frac 12}
                \right)            
             -\ti C A_0^{-8}
           \bigg\}, 
        \end{align*}
        where the positive constants $\ti C, \ti C_0, \ti C_1$, and $\ti C_2$ are independent of $t, \de_0$, and $\ve$.
        Choosing $\de_0 $ suitably small, $A_0$ suitably large,
        we establish 
        \begin{align*}
           \nl{r_1}{2}
           &\ge \frac 12 A_0^{8-\al} \de_0
           \left(
           \ti  C_1 
           +\ti C_2 A_0^{4}
           \right) \left(\ve^2t\right)^{\frac \al 4}
           \big|
           \ln \big(A_0^{-4}  \ve^2t \big)
           \big|^{-1}. 
        \end{align*}
        This finishes the proof of the case $\al \in [4, 6)$.
        
       Similarly, for $\al \in [6, 8]$, it suffices to consider the case where $(\ve, N, t)$ satisfies
       \textbf{Subcase $\text{B}_1$} in Subsection \ref{proof of main theorem},
          \begin{align*}
              1\ges \ve^2N^2, \ve^2tN^4, 
          \end{align*}
          or \textbf{Subcase $\text{B}_3$}
          \begin{align*}
              \ve^2 tN^4 \ges 1, \ve^2N^2.
        \end{align*}
          Choosing the same $N$ as in the case $\al \in [4, 6)$,
           then $t\in I $ implies that $t$ satisfies \eqref{de_0-1}.
          Arguing similarly as the case $\al \in [4, 6)$, and
          using \eqref{al 1'} when $\al \in [6,8)$, or \eqref{al 2} when $\al =8$,
           an analogous conclusion holds for $\al \in [6, 8]$.
        
       Hence, combining results for $\al \in [4,6) $ and $\al\in[6,8]$, we complete the proof of the non-regular case.
       Thus we prove Theorem \ref{optimal rate}.

\noindent {\bf Acknowledgments}
The authors would like to thank Professor Yifei Wu for helpful discussions and comments.
The authors are partially supported by NSFC 12171356.

\bibliography{ref}
\bibliographystyle{abbrv}

\end{document}